\numberwithin{equation}{section}
\numberwithin{figure}{section}
\theoremstyle{plain}
\newtheorem{thm}{Theorem}[section]
\theoremstyle{plain}
\newtheorem{prop}[thm]{Proposition}
\theoremstyle{remark}
\newtheorem{rem}[thm]{Remark}
\theoremstyle{plain}
\newtheorem{cor}[thm]{Corollary}
\theoremstyle{plain}
\theoremstyle{plain}
\newtheorem{lem}[thm]{Lemma}
\theoremstyle{definition}
\theoremstyle{definition}
\newtheorem{defn}[thm]{Definition}
\newcommand{\Tr}{\mathrm{Tr}}
\newcommand{\Pol}{\mathrm{Pol}}
\newcommand{\Irr}{\mathrm{Irr}}
\newlength{\dhatheight}
\newcommand{\doublehat}[1]{%
	\settoheight{\dhatheight}{\ensuremath{\hat{#1}}}%
	\addtolength{\dhatheight}{-0.35ex}%
	\hat{\vphantom{\rule{1pt}{\dhatheight}}%
		\smash{\hat{#1}}}}
\begin{document}
	
	\title{Infinitely divisible states on finite quantum groups}
	
	\author{Haonan Zhang}
	
	\address{Laboratoire de Math\'ematiques, Universit\'e de Franche-Comt\'e, 25030 Besan\c con, France and Institute of Mathematics, Polish Academy of Sciences, ul. \'Sniadeckich 8, 00-956 Warszawa, Poland}
	
	\email{haonan.zhang@edu.univ-fcomte.fr}
	
	\subjclass[2010]{Primary: 60B15, 60E07.  Secondary: 16T20.}
	
	\keywords{Infinite divisible states, States of Poisson type, Compact quantum group, Compact quantum hypergroup.}
	
	\begin{abstract}
		In this paper we study the states of Poisson type and infinitely divisible states on compact quantum groups. Each state of Poisson type is infinitely divisible, i.e., it admits $n$-th root for all $n\geq1$. The main result is that on finite quantum groups infinitely divisible states must be of Poisson type. This generalizes B\"oge's theorem concerning infinitely divisible measures (commutative case) and Parthasarathy's result on infinitely divisible positive definite functions (cocommutative case). Two proofs are given.
	\end{abstract}
	
	\maketitle
	
	\section*{0. Introduction}
	The space of bounded measures on a compact (semi)group is equipped with a natural convolution operation. The convolution of two probability measures is still a probability measure. Infinitely divisible probability measures are probability measures that admit $n$-th root for all $n\geq1$, where the root is also a probability measure. On finite groups such probability measures have been shown to be of Poisson type, see \cite{Boge} and \cite{poissonlaws}.
	
	A positive definite function on a compact group $G$ is a continuous function $\phi:G \to \mathbb{C}$ such that $[\phi(g^{-1}_ig_j)]_{i,j=1}^{n}$ is a positive semi-definite matrix for all $g_1,\dots,g_n\in G$ and for all $n\geq 1$. It is normalized if $\phi(e)=1$, where $e$ is the unit of $G$. The pointwise product of two normalized positive definite functions on $G$ is again a normalized positive definite function. From this we can define infinitely divisible normalized positive definite functions on a compact group in a natural way. This is thoroughly studied by Parthasarathy \cite{KRP}. As a special case, he proved that every infinitely divisible normalized positive definite function on a finite group is of Poisson type, although the notion ``Poisson type" was not explicitly defined in his paper. 
	
	We shall consider the infinite divisibility of states on quantum groups, which provide a more general framework. Our main result is that any infinitely divisible state on a finite quantum group is \emph{of Poisson type} (in the following also called simply a \emph{Poisson state}). By taking the finite quantum group to be commutative and cocommutative, we recover the B\"oge's result \cite{Boge} of infinitely divisible probability measures and Parthasarathy's result \cite{KRP} on infinitely divisible normalized positive definite functions for finite groups, respectively. We will give two proofs of the main theorem. The first one is based on the ideas of \cite{poissonlaws} and the second one goes back to \cite{KRP}. 
	
	The main difficulty in the study of infinite divisibility of states on quantum groups is to capture the ``quantum subgroup" on which the states are ``supported". It is known that the notion ``quantum subgroup" here should be replaced by ``quantum hypersubgroup" \cite{Idempotentstatesoncqg}. Indeed, this is closely related to idempotent states. On a classical compact group, idempotent probability measures are Haar measures on compact subgroups, due to Kawada and It\^o \cite{KawadaIto}. In quantum group case, idempotent states not necessarily correspond to quantum subgroups. See \cite{Pal} for the first counter example on 8-dimensional Kac-Paljutkin quantum group; more discussions can be found in \cite{Idempotentstatesoncqg}. The right concept one should consider here is the quantum hypergroup \cite{Idempotentstatesoncqg}. That is first reason why a whole section (Section 2) is devoted to the study of compact quantum hypergroups. The second reason for devoting much effort to compact quantum hypergroups here is the fact that, compared with the theory of compact quantum groups, very little is known for compact quantum hypergroups. The original definition \cite{Compactquantumhypergroup} is rather technical, which makes it difficult to construct examples. Thus relatively few concrete compact quantum hypergroups are known so far \cite{Compactquantumhypergroup,Newexamples}. This motivates us to present two approaches to constructions of compact quantum hypergroups in Section 2, to enlarge the class of relevant examples. We mention here that although the theory of algebraic quantum groups developed by Delvaux and Van Daele \cite{Algebraicquantumhypergroups} is very nice, it can not serve as a substitute of compact quantum hypergroups here; see Section 1 and Section 2 for more discussions.
	
	The plan of this paper is as follows. In Section 1 we recall the preliminaries on compact quantum groups, compact quantum hypergroups and algebraic quantum hypergroups. In Section 2 we give two approaches to construct compact quantum hypergroups from compact quantum groups, one induced by an idempotent state and one from a group-like projection. We also give a duality theorem for finite quantum hypergroups. Part of results in this section are new. Section 3 is devoted to the study of Poisson states on compact quantum groups. Finally in Section 4 we prove the main result of this paper, namely  that any infinitely divisible state on a finite quantum group is a Poisson state, in two different ways.
	
	\section{Preliminaries}
	\subsection{Compact quantum group and its dual}
	Let us recall some definitions and properties of compact quantum groups. We refer to \cite{CompactquantumgroupsWoronowicz} and \cite{CompactquantumgroupsVanDaele} for more details.
	
	\begin{defn}
		Let $A$ be a unital $C^*$-algebra. If there exists a unital *-homomorphism $\Delta:A\to A\otimes A$ such that 
		\begin{enumerate}
			\item $(\Delta\otimes\iota)\Delta=(\iota\otimes\Delta)\Delta$;
			\item $\{\Delta(a)(1\otimes b):a,b\in A\}$ and $\{\Delta(a)(b\otimes1):a,b\in A\}$ are linearly dense in $A\otimes A$,
		\end{enumerate}
		then $(A,\Delta)$ is called a \textit{compact quantum group} and $\Delta$ is called the
		comultiplication on $A$. Here and in the following, $\iota$ always denotes the identity map. We denote $\mathbb{G}=(A,\Delta)$ and $A=C(\mathbb{G})$. For simplicity, we write $\Delta^{(2)}=(\Delta\otimes\iota)\Delta$.
	\end{defn}
	
	Any compact quantum group $\mathbb{G}=(A,\Delta)$ admits a unique \textit{Haar state}, i.e.\ a state $h$ on $A$ such that
	\[
	(h\otimes\iota)\Delta(a)=h(a)1=(\iota\otimes h)\Delta(a),~~a\in A.
	\]
	Consider an element $u\in A\otimes B(H)$ with $\dim H=n$. By identifying $A\otimes B(H)$ with $\mathbb{M}_n(A)$ we can write $u=[u_{ij}]_{i,j=1}^{n}$. The matrix $u$ is called an \textit{n-dimensional representation} of $\mathbb{G}$ if we have
	\[
	\Delta(u_{ij})=\sum_{k=1}^{n}u_{ik}\otimes u_{kj},~~i,j=1,\dots,n.
	\]
	A representation $u$ is called \textit{unitary} if $u$ is unitary as an element in $\mathbb{M}_n(A)$, and \textit{irreducible} if the only matrices $T\in\mathbb{M}_n(\mathbb{C})$ such that $uT=Tu$ are multiples of identity matrix. Two representations $u,v\in\mathbb{M}_n(A)$ are said to be \textit{equivalent} if there exists an invertible matrix $T\in\mathbb{M}_n(\mathbb{C})$ such that $Tu=vT$. Denote by $\Irr(\mathbb{G})$ the set of equivalence classes of irreducible unitary representations of $\mathbb{G}$. For each $\alpha\in\Irr(\mathbb{G})$, denote by $u^\alpha\in A\otimes B(H_\alpha)$ a representative of the class $\alpha$, where $H_\alpha$ is the finite dimensional Hilbert space on which $u^\alpha$ acts. In the sequel we write $n_\alpha=\dim H_\alpha$.
	
	Denote $\Pol(\mathbb{G})=\text{span} \left\{u^\alpha_{ij}:1\leq i,j\leq n_\alpha,\alpha\in\Irr(\mathbb{G})\right\}$. This is a dense subalgebra of $A$. It is well-known that $(\Pol(\mathbb{G}),\Delta)$ is equipped with the Hopf*-algebra structure. That is, there exist a linear antihomormophism $S$ on $\Pol(\mathbb{G})$, called the \textit{antipode}, and a unital *-homomorphism $\epsilon:\Pol(\mathbb{G})\to\mathbb{C}$, called the \textit{counit}, such that
	\[
	(\epsilon\otimes\iota)\Delta(a)=a=(\iota\otimes\epsilon)\Delta(a),~~a\in\Pol(\mathbb{G}),
	\]
	\[
	m(S\otimes\iota)\Delta(a)=\epsilon(a)1=m(\iota\otimes S)\Delta(a),~~a\in\Pol(\mathbb{G}).
	\]
	Here $m$ denotes the multiplication map $m:\Pol(\mathbb{G})\otimes_{\text{alg}}\Pol(\mathbb{G})\to\Pol(\mathbb{G}),~~a\otimes b\mapsto ab$.
	Indeed, the antipode and the counit are uniquely determined by
	\[
	S(u^\alpha_{ij})=(u^{\alpha}_{ji})^*,~~1\leq i,j\leq n_\alpha,~~\alpha\in\Irr(\mathbb{G}),
	\]
	\[
	\epsilon(u^\alpha_{ij})=\delta_{ij},~~1\leq i,j\leq n_\alpha,~~\alpha\in\Irr(\mathbb{G}).
	\]
	Remark here that $*\circ S\circ*\circ S=\iota$.
	
	The Peter-Weyl theory for compact groups can be extended to the quantum case. In particular, it is known that for each $\alpha\in\Irr(\mathbb{G})$ there exists a positive invertible operator $Q_\alpha\in B(H_\alpha)$ such that $\text{Tr}(Q_\alpha)=\text{Tr}(Q^{-1}_\alpha):=d_\alpha$, which we call \textit{quantum dimension} of $\alpha$, and the orthogonal relations hold:
	\[
	h(u^\alpha_{ij}(u^\beta_{kl})^*)=\frac{\delta_{\alpha\beta}\delta_{ik}(Q_\alpha)_{lj}}{d_\alpha},~~ 
	h((u^\alpha_{ij})^*u^\beta_{kl})=\frac{\delta_{\alpha\beta}\delta_{jl}(Q^{-1}_\alpha)_{ki}}{d_\alpha},
	\]
	where $\alpha,\beta\in\Irr(\mathbb{G})$, $1\leq i,j\leq n_\alpha, 1\leq k,l\leq n_\beta.$
	
	The Pontryagin duality can also be extended to compact quantum groups. The dual quantum group $\hat{\mathbb{G}}$ of $\mathbb{G}=(A,\Delta)$ is defined via its ``algebra of functions", which is the $C^*$-algebra defined as the $c_0$-direct sum
	\[
	\hat{A}=\bigoplus_{\alpha\in\Irr(\mathbb{G})}B(H_\alpha).
	\] 
	Unless $\mathbb{G}$ is finite quantum group, $\hat{A}$ is not unital. We define $\hat{\mathcal{A}}$ as the *-algebra via the algebraic direct sum
	\[
	\hat{\mathcal{A}}=\bigoplus_{\alpha\in\Irr(\mathbb{G})}B(H_\alpha).
	\]
	That is to say, each element of $\hat{\mathcal{A}}$ has only finitely many non-zero components in the direct summands. Clearly, $\hat{\mathcal{A}}$ is dense in $\hat{A}$.
	
	We can equip $\hat{A}$ with a \textit{discrete quantum group} structure. See \cite{Discretequantumgroups} for more details. Recall only that the \textit{left Haar weight} $\hat{h}_{L}$ on $\hat{\mathbb{G}}$ is given by
	\[
	\hat{h}_L(x)=\sum_{\alpha\in\Irr(\mathbb{G})}\Tr(Q_\alpha)\Tr(Q_\alpha x_\alpha),~~x\in\hat{A},
	\]
	where $x_\alpha$ denotes the component of $x$ in the direct summand $B(H_\alpha)$. The \textit{right Haar weight} $\hat{h}_{R}$ shares a similar form. 
	
	On $\Pol(\mathbb{G})^*$, the set of bounded linear functionals on $\Pol(\mathbb{G})$ (where $\Pol(\mathbb{G})$ is viewed with the universal enveloping $C^*$-norm), there is a natural Banach *-algebra structure. When $\varphi_1,\varphi_2$ are two bounded linear functionals on $\Pol(\mathbb{G})$, then their \textit{convolution product} is defined as $\varphi_1\star\varphi_2:=(\varphi_1\otimes\varphi_2)\Delta$. Clearly we have $\Vert\varphi_1\star\varphi_2\Vert\leq\Vert\varphi_1\Vert\Vert\varphi_2\Vert$, where the norm of functionals on $\Pol(\mathbb{G})$ always come from $C(\mathbb{G})^*$. For any $\varphi\in\Pol(\mathbb{G})^*$, define the involution of $\varphi$ as $\varphi^*:=\overline{\varphi(S(\cdot)^*)}$. One can also construct the dual of $\mathbb{G}=(A,\Delta)$ via the functionals on $A$ with this *-algebra structure, see \cite{CompactquantumgroupsVanDaele}. Here we use the \textit{Fourier transform} to say a few words on this.
	
	For a linear functional $\varphi$ on $\Pol(\mathbb{G})$, define its Fourier transform $\hat{\varphi}=(\hat{\varphi}(\alpha))_{\alpha\in\Irr(\mathbb{G})}\in\bigoplus_{\alpha\in\Irr(\mathbb{G})}B(H_\alpha)$ by
	\[
	\hat{\varphi}(\alpha)=(\varphi\otimes\iota)(u^\alpha)\in B(H_\alpha),~~\alpha\in\Irr(\mathbb{G}).
	\]
	The Fourier transform $\mathcal{F}:\varphi\mapsto\hat{\varphi}$ sends the convolution to multiplication 
	\[
	(\varphi_1\star\varphi_2)\string^(\alpha)=\hat{\varphi_1}(\alpha)\hat{\varphi_2}(\alpha),~~\alpha\in\Irr(\mathbb{G}),\varphi_1,\varphi_2\in\Pol(\mathbb{G})^*,
	\] 
	and is *-preserving: $\widehat{\varphi^*}=\hat{\varphi}^*,\varphi\in\Pol(\mathbb{G})^*$. Moreover, $\Vert\hat{\varphi}\Vert=\Vert(\varphi\otimes\iota)(W)\Vert\leq1$, where $W=\oplus_{\alpha\in\Irr(\mathbb{G})}u^{\alpha}$ is unitary. So $\mathcal{F}$ is a contraction.
	
	We call $\mathbb{G}$ a \textit{finite quantum group} if the underlying $C^*$-algebra $C(\mathbb{G})$ is finite dimensional. In this case each $Q_\alpha$ is identity and $h$ is also a trace, i.e., $h(ab)=h(ba)$ for any $a,b\in C(\mathbb{G})$. Then the orthogonal relation becomes
	\begin{equation}\label{Orthogonal relation for finite quantum groups}
	h(u^\alpha_{ij}(u^\beta_{kl})^*)=h((u^\alpha_{ij})^*u^\beta_{kl})=\frac{\delta_{\alpha\beta}\delta_{ik}\delta_{jl}}{n_\alpha},
	\end{equation}
	where $\alpha,\beta\in\Irr(\mathbb{G})$, $1\leq i,j\leq n_\alpha, 1\leq k,l\leq n_\beta.$ If $\mathbb{G}$ is a finite quantum group, then so is its dual $\hat{\mathbb{G}}$, and the corresponding Haar weights $\hat{h}_{L}$ and $\hat{h}_{R}$ coincide, and are denoted by $\hat{h}$. After normalization the functional $\hat{h}$ becomes a tracial state of the form:
	\[
	\hat{h}(x)=\frac{1}{h(1)}\sum_{\alpha\in\Irr(\mathbb{G})}n_\alpha\Tr(x_\alpha),~~x\in\hat{A}.
	\]
	Moreover, the antipode $S$ satisfies $S^2=\iota$. Together with $*\circ S\circ *\circ S=\iota$, one obtains directly that $S$ is *-preserving. The Fourier transform $\mathcal{F}$ now is a *-isomorphism between the C*-algebras $(A,\cdot,*)$ and $(A^*,\star,*)$. The notation $\hat{h}$ has some conflict with the Fourier transform of $h$. It will not be difficult for readers to distinguish them by observing the elements they act on. 
	
	\subsection{Compact quantum hypergroups and *-algebraic quantum hypergroups}
	Compact quantum hypergroups were introduced by Chapovsky and Vainerman in \cite{Compactquantumhypergroup}. Their definition  is very technical, relying on the existence of a one-parameter group of automorphisms verifying certain relations. This brings a lot of trouble constructing non-trivial compact quantum hypergroups. Later on Kalyuzhnyi proposed \cite{Newexamples} a construction of compact quantum hypergroups using conditional expectations on compact quantum groups. The compact quantum hypergroups discussed in this paper mainly come from this construction. However, we will also give an improvement of Kalyuzhnyi's result in the sense that, a new class of conditional expectations that do not verify Kalyuzhnyi's conditions but still induce compact quantum hypergroups, is constructed. Indeed, such constructions have already been studied by Delvaux and Van Daele on the algebraic level and have also been widely used by others, see for example \cite{Idempotentstatesoncqg} and \cite{Newcharacterisationofidempotentstates}. 
	
	In \cite{Algebraicquantumhypergroups} Delvaux and Van Daele introduced the so-called \emph{*-algebraic quantum hypergroups}, which are essentially the algebraic counterparts of compact quantum hypergroups. In a separate note \cite{constructionsandexamplesofquantumhypergroups} they gave several constructions and examples of *-algebraic quantum hypergroups. Note that even if it is reasonable to expect that a *-algebraic quantum hypergroup of compact type with positive integrals will yield a compact quantum hypergroup in the sense of \cite{Compactquantumhypergroup} (see \cite{Algebraicquantumhypergroups}), it has not been shown so far. We hope that this could be established in the future.
	
	Delvaux and Van Daele's *-algebraic quantum hypergroups admit a very nice biduality theory \cite[Theorem 3.12]{Algebraicquantumhypergroups}, that is, for a *-algebraic quantum hypergroup $(A,\Delta)$ with its dual $(\hat{A},\hat{\Delta})$, there is a natural isomorphism between $(A,\Delta)$ and its bidual $(\doublehat{A},\doublehat{\Delta})$.  We will however use mainly compact quantum hypergrous in this paper. The reason for this is that, on compact quantum hypergroups a representation theory similar to that of compact quantum groups was developed \cite{Compactquantumhypergroup}. See Theorem \ref{Peter-Weyl quantum hyper group} below for a Peter-Weyl theory for compact quantum hypergroups.
	
	\subsubsection{Compact quantum hypergroups}
	Now we introduce the compact quantum hypergroups. Let $(A,\cdot,1,*)$ be a separable unital C*-algebra. We will call $(A,\delta,\epsilon,*)$ a \emph{hypergroup structure} on the C*-algebra $(A,\cdot,1,*)$ if $(A,\delta,\epsilon,\star)$ is a $\star$-coalgebra in the following sense:
	\begin{enumerate}
		\item $\delta:A\to A\otimes A$ is a linear positive map such that
		$$(\delta\otimes\iota)\delta=(\iota\otimes\delta)\delta,$$
		$$\delta(a^*)=\delta(a)^*,~~a\in A,$$
		$$\delta(1)=1\otimes 1;$$
		\item $\epsilon:A\to\mathbb{C}$ is a linear map such that
		$$\epsilon(ab)=\epsilon(a)\epsilon(b),~~a,b\in A,$$
		$$(\epsilon\otimes\iota)\delta=(\iota\otimes\epsilon)\delta=\iota;$$
		\item $\star:A\to A$ is an anti-linear map such that 
		$$\star\circ\star=\iota,$$
		$$(ab)^\star=a^\star b^\star,~~a,b\in A,$$
		$$\star\circ*=*\circ\star,$$ 
		$$\delta\circ\star=\Pi\circ(\star\otimes\star)\circ\delta,$$
		where $\Pi$ is the flip on $A\otimes A$, i.e., $\Pi(a\otimes b)=b\otimes a$.
	\end{enumerate}
	Under these assumptions one can deduce that \cite[Lemma 1.2]{Compactquantumhypergroup}
	$$1^\star=1,$$
	$$\epsilon(1)=1,$$
	$$\epsilon(a^\star)=\epsilon(a^*)=\overline{\epsilon(a)},~~a\in A.$$
	Denote by $A^*$ the set of all bounded linear continuous functionals on $A$, then for $\xi,\eta\in A^*$ we can define a product and an involution $^+$
	$$(\xi\cdot\eta)(a):=(\xi\otimes\eta)\delta(a),~~a\in A,$$
	$$\xi^+(a):=\overline{\xi(a^\star)},~~a\in A.$$
	Moreover, we can equip $A^*$ with the following norm
	$$\Vert\xi\Vert:=\sup\{|\xi(a)|:\Vert a\Vert\leq 1\}.$$
	Thus $(A^*,\cdot,+,\Vert\cdot\Vert)$ becomes a Banach $^+$-algebra \cite[Lemma 1.3]{Compactquantumhypergroup}.
	
	Let $A$ be a unital C*-algebra equipped with a hypergroup structure as above. An element $a\in A$ is called \emph{positive definite} if $\xi\cdot\xi^+(a)\geq0$ for all $\xi\in A^*$. It is known that \cite[Theorem 2.3]{Compactquantumhypergroup} if the linear span of all positive definite elements in $A$ is dense in $A$, then there exists a unique $h\in A^*$ such that 
	$$(h\otimes\iota)\delta(a)=(\iota\otimes h)\delta(a)=h(a)1,~~a\in A,$$
	called the \emph{Haar measure} on $A$. Moreover, $h=h^+.$
	
	Now we are ready to introduce the \emph{compact quantum hypergroup}.
	
	\begin{defn}\cite[Definition 4.1]{Compactquantumhypergroup}
		Suppose that $(A,\delta,\epsilon,*)$ is a hypergroup structure on the C*-algebra $(A,\cdot,1,*)$. We call $\mathcal{A}=(A,\cdot,1,*,\delta,\epsilon,\star,\sigma_t)$ a \emph{compact quantum hypergroup} if 
		\begin{enumerate}
			\item the linear span of positive definite elements of $A$ is dense in $A$ (thus there exists the unique Haar measure $h$);
			\item $\delta$ is completely positive;
			\item $(\sigma_t)_{t\in\mathbb{R}}$ is a continuous one-parameter group of automorphisms of $A$ such that there exists dense subalgebras $A_0\subset A$ and $\widetilde{A}_0\subset A\otimes A$ verifying 
			\begin{enumerate}
				\item The one-parameter groups $\sigma_t$, $\iota\otimes\sigma_t$ and $\sigma_t\otimes\iota$ can be extended to complex one-parameter groups $\sigma_z$, $\iota\otimes\sigma_z$ and $\sigma_z\otimes\iota$, $z\in\mathbb{C}$, of automorphisms of $A_0$ and $\widetilde{A}_0$, respectively;
				\item $A_0$ is invariant under $*$ and $\star$, and $\delta(A_0)\subset\widetilde{A}_0$;
				\item  for all $z\in\mathbb{C}$ and $a \in A_0$ there is
				$$\delta\sigma_z(a)=(\sigma_z\otimes\sigma_z)\delta(a),$$
				$$h(\sigma_z(a))=h(a);$$
				\item there exists $z_0\in\mathbb{C}$ such that $h$ satisfies the \emph{strong invariance condition}:
				\begin{equation}\label{strong invariance}
				(\iota\otimes h)[((*\circ\sigma_{z_0}\circ\star)\otimes\iota)\delta(a)(1\otimes b)]=(\iota\otimes h)[(1\otimes a)\delta(b)],
				\end{equation}
				for all $a,b\in A_0.$
				\item $h$ is faithful on $A_0$. 
			\end{enumerate} 
		\end{enumerate}
	\end{defn}
	
	For short we write $\mathbb{H}=(A,\delta)$ to denote the compact quantum hypergroup. We can also define the antipode as $\kappa:=*\circ\sigma_{z_0}\circ\star$. It is invertible with the inverse $\kappa^{-1}=\star\circ\sigma_{-z_0}\circ*$. See \cite[Lemma 4.4]{Compactquantumhypergroup} for more properties of $\kappa$.
	
	As one can see, the definition of compact quantum hypergroup is very complicated and technical. Certainly compact quantum groups and the classical compact hypergroups are compact quantum hypergroups \cite{Compactquantumhypergroup}. But usually it is very difficult to construct other examples. One way to do this uses a sufficiently nice conditional expectation on a compact quantum group, as the following theorem shows.
	
	\begin{thm}\cite{Newexamples}\label{New examples}
		Let $\mathbb{G}=(A,\Delta,\epsilon,S)$ be a compact quantum group. Let $h$ be its Haar measure and $P:A\to B$
		be an $h$-invariant conditional expectation that maps to a unital $C^*$-subalgebra
		$B$ of $A$. Let us define a new comultiplication $\widetilde{\Delta}$ on $B$ as $\widetilde{\Delta}=(P\otimes P)\Delta|_B$. Suppose that 
		\begin{enumerate}
			\item $(P\otimes P)\Delta(x)=(P\otimes P)\Delta(P(x)),x\in A$;
			\item $\Pol(\mathbb{G})$ is invariant under $P$ and $S P=P S$; 
			\item the restriction of $\epsilon$ to $B$ is a counit, i.e.,
			\[
			(\epsilon\otimes\iota)\widetilde{\Delta}=\iota=(\iota\otimes\epsilon)\widetilde{\Delta}.
			\]
		\end{enumerate}
		Then $(B,\widetilde{\Delta})$ forms a compact quantum hypergroup.
	\end{thm}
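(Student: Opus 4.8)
The plan is to equip $B$ with the hypergroup structure whose underlying $C^*$-algebra, involution $*$ and counit $\epsilon$ are those of $\mathbb G$ restricted to $B$, whose comultiplication is $\widetilde\Delta$, whose hypergroup involution is $\star:=(*\circ R)|_B$ (here $R$ is the unitary antipode of $\mathbb G$), and whose one-parameter group is $\sigma_t:=\tau_t|_B$ (here $(\tau_t)_{t\in\mathbb R}$ is the scaling group of $\mathbb G$); as analytic data one takes $A_0:=\Pol(\mathbb G)\cap B=P(\Pol(\mathbb G))$, $\widetilde A_0:=A_0\otimes_{\mathrm{alg}}A_0$ and $z_0:=i/2$, so that the hypergroup antipode $\kappa:=*\circ\sigma_{z_0}\circ\star$ restricts to $S$ on $A_0$. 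The first task, which I expect to be the main obstacle, is a preliminary reduction showing that $P$ commutes with the whole modular apparatus and not merely with $S$. From $SP=PS$ on $\Pol(\mathbb G)$ (hypothesis (2)) one gets $S^2P=PS^2$, i.e.\ $\tau_{-i}P=P\tau_{-i}$. On $\Pol(\mathbb G)$ the operators $\tau_z$ are simultaneously diagonalizable in a basis on which $\tau_{-i}=S^2$ acts with strictly positive eigenvalues, and the $\tau_z$-eigenvalue on such a basis vector equals $c^{iz}$ where $c>0$ is its $\tau_{-i}$-eigenvalue; hence the eigenspace decomposition of $\Pol(\mathbb G)$ under $\tau_{-i}$ refines that under every $\tau_z$, so that commuting with $\tau_{-i}$ forces $P\tau_z=\tau_zP$ on $\Pol(\mathbb G)$ for all $z\in\mathbb C$. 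Writing $R=\tau_{i/2}\circ S$ then gives $RP=PR$ on $\Pol(\mathbb G)$, hence on $A$ (both maps are bounded and $\Pol(\mathbb G)$ is dense). Consequently $R(B)=B$, $\tau_z(A_0)\subseteq A_0$, $R(A_0)=A_0$ and $P\circ\star=\star\circ P$, which is exactly what makes $\star$ and the maps $\sigma_z$ well defined on $B$.

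Granting this, I would first verify the ``$\star$-coalgebra'' axioms for $(B,\widetilde\Delta,\epsilon|_B,\star)$. Positivity of $\widetilde\Delta$ is clear ($\Delta$ is a $*$-homomorphism, and $P$, hence $P\otimes P$, is positive); $\widetilde\Delta(1)=1\otimes1$ and $\widetilde\Delta(a^*)=\widetilde\Delta(a)^*$ hold since $P$ is unital and $*$-preserving; and coassociativity of $\widetilde\Delta$ is precisely where hypothesis (1) enters, since $(P\otimes P)\Delta P=(P\otimes P)\Delta$ lets one erase the inner copy of $P$ and reduce both $(\widetilde\Delta\otimes\iota)\widetilde\Delta$ and $(\iota\otimes\widetilde\Delta)\widetilde\Delta$ to $(P\otimes P\otimes P)(\Delta\otimes\iota)\Delta=(P\otimes P\otimes P)(\iota\otimes\Delta)\Delta$. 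Multiplicativity of $\epsilon|_B$ is inherited from the counit, and the counit identity is hypothesis (3). The axioms for $\star$ --- antilinear, multiplicative, $\star\circ\star=\iota$, $\star\circ*=*\circ\star$, and $\widetilde\Delta\circ\star=\Pi\circ(\star\otimes\star)\circ\widetilde\Delta$ --- are transported from the corresponding properties of the unitary antipode ($R$ a $*$-preserving anti-automorphism with $R^2=\iota$ and $\Delta R=\Pi(R\otimes R)\Delta$), using $RP=PR$ and that $\Pi$ commutes with $P\otimes P$.

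Next I turn to the three conditions in the definition of a compact quantum hypergroup. Condition (2), complete positivity of $\widetilde\Delta$, is immediate: $\Delta$ is a $*$-homomorphism hence completely positive, $P\otimes P$ is completely positive, and $\widetilde\Delta$ is their composition restricted to $B$. For condition (1), the density of positive-definite elements, the key point is that $P$ sends positive-definite elements of $\mathbb G$ (viewed as a compact quantum hypergroup with involution $*\circ R$) to positive-definite elements of $B$: for $\xi\in B^*$ and $a\in\Pol(\mathbb G)$ positive definite, hypothesis (1) gives $(\xi\otimes\xi^+)\widetilde\Delta(P(a))=(\xi\otimes\xi^+)(P\otimes P)\Delta(a)=\big((\xi\circ P)\otimes(\xi\circ P)^+\big)\Delta(a)\ge 0$, where $P\circ\star=\star\circ P$ is used to identify $\xi^+\circ P=(\xi\circ P)^+$; since positive-definite elements span a dense subspace of $A$ and $P(A)=B$, their images span a dense subspace of $B$. (This also yields $h|_B$ as the Haar measure, since by $h\circ P=h$ and the $B$-bimodularity of $P$ one has $(h|_B\otimes\iota)\widetilde\Delta(b)=P\big((h\otimes\iota)\Delta(b)\big)=h(b)1$, and symmetrically.) This step, together with the strong invariance below, is the second place I expect to require care.

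Finally, for condition (3): $\sigma_z=\tau_z|_{A_0}$, as well as $\sigma_z\otimes\iota$ and $\iota\otimes\sigma_z$ on $\widetilde A_0$, are entire; $A_0$ is invariant under $*$ and $\star$; $\widetilde\Delta(A_0)\subseteq(P\otimes P)(\Pol(\mathbb G)\otimes\Pol(\mathbb G))\subseteq A_0\otimes_{\mathrm{alg}}A_0=\widetilde A_0$; the intertwining $\widetilde\Delta\sigma_z=(\sigma_z\otimes\sigma_z)\widetilde\Delta$ and invariance $h\circ\sigma_z=h$ follow from $\Delta\tau_z=(\tau_z\otimes\tau_z)\Delta$, $h\circ\tau_z=h$ and $P\tau_z=\tau_zP$; and $h$ is faithful on $A_0$ because it is faithful on $\Pol(\mathbb G)$. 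For the strong invariance one checks $(*\circ\sigma_{i/2}\circ\star)|_{A_0}=(*\circ\tau_{i/2}\circ*)\circ R|_{A_0}=(\tau_{-i/2}\circ R)|_{A_0}=S|_{A_0}$, and then for $a,b\in A_0$, using $\kappa|_{A_0}=S|_{A_0}$, $SP=PS$, $h\circ P=h$ and the $B$-bimodularity of $P$,
\[
(\iota\otimes h)\big[((*\circ\sigma_{i/2}\circ\star)\otimes\iota)\widetilde\Delta(a)(1\otimes b)\big]=P\big((\iota\otimes h)[(S\otimes\iota)\Delta(a)(1\otimes b)]\big),
\]
\[
(\iota\otimes h)\big[(1\otimes a)\widetilde\Delta(b)\big]=P\big((\iota\otimes h)[(1\otimes a)\Delta(b)]\big);
\]
the two right-hand sides coincide by the strong left-invariance identity $(\iota\otimes h)[(S\otimes\iota)\Delta(a)(1\otimes b)]=(\iota\otimes h)[(1\otimes a)\Delta(b)]$ valid on $\Pol(\mathbb G)$. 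This verifies all the axioms, so $(B,\widetilde\Delta)$ is a compact quantum hypergroup.
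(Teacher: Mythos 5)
The paper does not actually prove this theorem: it is quoted verbatim from Kalyuzhnyi's article \cite{Newexamples} and used as a black box, so there is no in-paper argument to compare yours against. Judged on its own, your proof is correct and complete, and it supplies exactly the data the Chapovsky--Vainerman definition demands: $\star=\ast\circ R$, $\sigma_t=\tau_t|_B$, $A_0=P(\Pol(\mathbb G))$, $z_0=i/2$, so that $\kappa=\ast\circ\sigma_{i/2}\circ\star$ restricts to $S$ on $A_0$. The one genuinely nontrivial input beyond the stated hypotheses is your reduction $SP=PS\Rightarrow S^2P=PS^2=\tau_{-i}P=P\tau_{-i}\Rightarrow P\tau_z=\tau_zP$ for all $z$, and the eigenspace argument you give for it is sound: on each irreducible block one may take $Q_\alpha$ diagonal, so $\tau_{-i}$ acts on $u^\alpha_{ij}$ by the positive scalar $q_i/q_j$ and $\tau_z$ by $(q_i/q_j)^{iz}$, whence every $\tau_{-i}$-eigenspace sits inside a single $\tau_z$-eigenspace and commutation with $\tau_{-i}$ propagates to all $\tau_z$. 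From there $RP=PR$, the $\star$-coalgebra axioms, complete positivity of $\widetilde\Delta$, the transport of positive-definite elements through $P$ (using hypothesis (1) and $\xi^+\circ P=(\xi\circ P)^+$), the identification of $h|_B$ as the Haar measure, and the reduction of strong invariance to the standard strong left-invariance on $\Pol(\mathbb G)$ via $h\circ P=h$ and $B$-bimodularity all check out. Two minor remarks: first, you implicitly assume (as does the theorem's statement itself) that the counit $\epsilon$ is defined on all of $B$, which for a non-coamenable $\mathbb G$ requires working with the universal completion; second, in every application made in this paper $\mathbb G$ is finite (hence Kac), where $\tau_t=\iota$ and $R=S$, so your first paragraph --- the only delicate part --- becomes vacuous there. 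Neither point is a gap in your argument.
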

	
	Like for compact quantum groups, there is a representation theory for compact quantum hypergroups. A matrix $u=[u_{ij}]_{i,j=1}^{n}\in M_n(A)$ is called an \textit{n-dimensional representation} of $A$ if we have
	$$\delta(u_{ij})=\sum_{k=1}^{n}u_{ik}\otimes u_{kj},$$
	$$\epsilon(u_{ij})=\delta_{ij},$$
	for all $i,j=1,\dots,n$. Here $\delta_{ij}$ denotes the Kronecker symbol. It is called a $^\dagger$-representation if $u=[u_{ij}]_{i,j=1}^{n}\in M_n(A_0)$ and $u^\dagger_{ij}=u_{ji},1\leq i,j\leq n$, where $a^\dagger:=\kappa(a)^*$ for $a\in A_0$.
	
	Let $\Irr(\mathbb{H})$ denote a maximal set of finite dimensional irreducible non-equivalent $^\dagger$-representations $u^\alpha=[u^\alpha_{ij}]_{i,j=1}^{n_\alpha}$. We write $\alpha\in\Irr(\mathbb{H})$ for short to denote $u^\alpha\in\Irr(\mathbb{H})$. Then $\text{span}\{u^\alpha_{ij}:1\leq i,j\leq n_\alpha,\alpha\in\Irr(\mathbb{H})\}$ is dense in $A$ with respect to the $C^*$-norm \cite[Theorem 5.11]{Compactquantumhypergroup}.
	
	But the orthogonal relation is slightly weaker:
	\begin{thm}\cite[Theorem 5.8, Remark 5.9]{Compactquantumhypergroup}\label{Peter-Weyl quantum hyper group}
		Let $\mathbb{H}=(A,\delta)$ be a compact quantum hypergroup with the Haar measure $h$. Then for irreducible $^\dagger$-representations $[u^\alpha_{ij}]_{i,j=1}^{n_\alpha}$ and $[u^\beta_{kl}]_{k,l=1}^{n_\beta}$, we have
		\[
		h(u^\alpha_{ij}(u^\beta_{lk})^*)=0,
		\]
		if either the representations are not equal or or $i\neq l$.
	\end{thm}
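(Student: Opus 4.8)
The plan is to adapt the Peter--Weyl argument for compact quantum groups: manufacture, out of the Haar measure $h$, operators that intertwine irreducible $\dagger$-representations, and then invoke Schur's lemma for $\dagger$-representations (part of the representation theory developed in \cite{Compactquantumhypergroup}). The weakening of the orthogonality relation should be traced to the fact that the coproduct $\delta$ of a compact quantum hypergroup is \emph{not} multiplicative, so the place where multiplicativity of $\Delta$ enters the classical computation has to be replaced by the (inherently one-sided) strong invariance condition \eqref{strong invariance}.

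Concretely, fix $\alpha,\beta\in\Irr(\mathbb{H})$ with $\dagger$-representations $u^\alpha=[u^\alpha_{ij}]\in M_{n_\alpha}(A_0)$, $u^\beta=[u^\beta_{kl}]\in M_{n_\beta}(A_0)$, and for a scalar matrix $T\in M_{n_\alpha\times n_\beta}(\mathbb{C})$ I would form the averaged operator $E(T):=(\iota\otimes h)\big(u^\alpha\,(T\otimes 1)\,(u^\beta)^{*}\big)\in M_{n_\alpha\times n_\beta}(\mathbb{C})$, where $(u^\beta)^{*}$ is the matrix adjoint, $((u^\beta)^{*})_{kl}=(u^\beta_{lk})^{*}$; this makes sense since all entries lie in $A_0$. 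A direct index computation gives $E(e_{jk})_{il}=h\big(u^\alpha_{ij}(u^\beta_{lk})^{*}\big)$, so the theorem is equivalent to: $E(T)=0$ whenever $\alpha\neq\beta$, and $E(T)\in\mathbb{C}\,\mathrm{id}$ whenever $\alpha=\beta$. Both consequences follow from Schur's lemma once I know that $E(T)$ lies in the appropriate intertwiner space between $u^\beta$ and $u^\alpha$; reading off the $(i,l)$-entry with $T=e_{jk}$ then yields $h\big(u^\alpha_{ij}(u^\beta_{lk})^{*}\big)=0$ unless $\alpha=\beta$ and $i=l$.

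Showing that $E(T)$ is an intertwiner is the heart of the matter and the step I expect to be the main obstacle. In the compact quantum group case one applies $\iota\otimes\iota\otimes h$ to $(\iota\otimes\Delta)\big(u^\alpha(T\otimes1)(u^\beta)^{*}\big)$, distributes $\Delta$ across the matrix product into $u^\alpha_{12}u^\alpha_{13}(T\otimes1\otimes1)(u^\beta)^{*}_{13}(u^\beta)^{*}_{12}$ (here multiplicativity of $\Delta$ is used), and then integrates out the last leg by invariance, arriving at $u^\alpha(E(T)\otimes1)(u^\beta)^{*}=E(T)\otimes1$; that distributing step is exactly what is unavailable over a quantum hypergroup. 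Instead I would feed pairs of matrix coefficients $a=u^\alpha_{ij}$, $b=(u^\beta_{lk})^{*}$ into \eqref{strong invariance}, using the $\dagger$-representation identity $\kappa(u^\beta_{kl})=(u^\beta_{lk})^{*}$ together with $\delta(u^\gamma_{pq})=\sum_r u^\gamma_{pr}\otimes u^\gamma_{rq}$ and $\delta(a^{*})=\delta(a)^{*}$, which unwinds \eqref{strong invariance} into an operator identity among the quantities $h\big(u^\alpha_{\cdot\,\cdot}(u^\beta_{\cdot\,\cdot})^{*}\big)$. Because \eqref{strong invariance} carries $h$ on a single fixed leg, this identity will only control the \emph{row} indices $i$ (of $u^\alpha$) and $l$ (of $u^\beta$)---which is precisely the asymmetry in the statement---while the column indices $j,k$ remain free, so that (unlike in the group case, where the operators $Q_\alpha$ then enter) no finer relation can be, or is, claimed. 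A secondary technical point to watch throughout is that a $\dagger$-representation is unitary with respect to the twisted involution $\dagger=*\circ\kappa$ rather than $*$, so one must be careful passing between $(u^\beta)^{*}$, $(u^\beta)^{\dagger}$ and inverse matrices; this is where the one-parameter group $\sigma_z$ and the properties of $\kappa$ from \cite[Lemma~4.4]{Compactquantumhypergroup} come in.
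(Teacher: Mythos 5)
First, a point of order: the paper does not prove this statement at all --- it is imported verbatim from \cite{Compactquantumhypergroup} (Theorem 5.8 and Remark 5.9), so there is no internal proof to compare your argument against. Judged on its own terms, your outline identifies the correct mechanism (strong invariance of $h$ standing in for the missing multiplicativity of $\delta$) but leaves unexecuted precisely the step you yourself flag as the heart of the matter, and that step is where the only real work lies. Carrying out your suggested substitution $a=u^\alpha_{ij}$, $b=(u^\beta_{lk})^*$ in \eqref{strong invariance}, using $\kappa(u^\gamma_{pq})=(u^\gamma_{qp})^*$ and $\delta(x^*)=\delta(x)^*$, yields the identity \emph{in $A$}
\[
\sum_{r=1}^{n_\alpha}(u^\alpha_{ri})^*\,h\bigl(u^\alpha_{rj}(u^\beta_{lk})^*\bigr)
=\sum_{s=1}^{n_\beta}(u^\beta_{ls})^*\,h\bigl(u^\alpha_{ij}(u^\beta_{sk})^*\bigr).
\]
This is not yet the statement that $E(e_{jk})$ is an intertwiner: it is a linear relation among elements of $A$ whose coefficients are the unknown integrals. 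The missing argument is the conversion: pair both sides with an arbitrary $\xi\in A^*$ and rewrite $\xi((u^\gamma_{qp})^*)=(\xi\circ\kappa)(u^\gamma_{pq})$, so that with $\eta:=\xi\circ\kappa$ the display becomes $(\eta\otimes\iota)(u^\alpha)\,E(e_{jk})=E(e_{jk})\,(\eta\otimes\iota)(u^\beta)$ for all $\eta$. Only in this form does Schur's lemma apply, since irreducibility in the Chapovsky--Vainerman framework is a statement about the commutant of the slices $(\eta\otimes\iota)(u)$; you also need to justify that $\xi\circ\kappa$ exhausts enough functionals (which uses that $\kappa$ is a bijection of $A_0$ and that all matrix coefficients lie in $A_0$).

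With that passage supplied your plan closes, and in fact Schur gives slightly more than the quoted statement: for $\alpha=\beta$ one gets $E(e_{jk})=c_{jk}\,\mathrm{id}$, i.e.\ the diagonal entries $h(u^\alpha_{ij}(u^\alpha_{ik})^*)$ are independent of $i$, not merely that the off-diagonal ones vanish. So the gap is genuine but local and repairable: as written, the proposal asserts that strong invariance ``unwinds into an operator identity'' and that Schur then finishes, without exhibiting the identity or explaining why it is an intertwining relation rather than merely a relation in $A$; the duality step through $A^*$ and $\kappa$ is the content you still owe.
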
  
	
	\subsubsection{Algebraic quantum hypergroups}
	In this subsection we mainly consider *-algebraic quantum hypergroups. Our references in this subsection are \cite{Algebraicquantumhypergroups,Algebraicquantumhypergroups,Discretequantumgroups}.
	
	We start with a *-algebra $A$ over $\mathbb{C}$ with a non-degenerate product. Let $M(A)$ denote its multiplier algebra. A \emph{comultiplication}, or \emph{coproduct} on $A$ is a linear *-preserving map $\Delta:A\to M(A\otimes A)$ such that both $\Delta(a)(1\otimes b)$ and $(a\otimes 1)\Delta(b)$ belong to $A\otimes A$ for all $a,b\in A$ and such that
	\begin{equation*}
	(a\otimes 1\otimes 1)(\Delta\otimes\iota)(\Delta(b)(1\otimes c))=(\iota\otimes\Delta)((a\otimes 1)\Delta(b))(1\otimes 1\otimes c),
	\end{equation*}
	for all $a,b,c\in A$. Note that $\Delta$ is automatically \emph{regular}, i.e., both $\Delta(a)(b\otimes 1)$ and $(1\otimes a)\Delta(b)$ also belong to $A\otimes A$ for all $a,b\in A$.
	
	\begin{defn}\cite{Algebraicquantumhypergroups}
		A \emph{*-algebraic quantum hypergroup} $(A,\Delta,\epsilon,\varphi,S)$ consists of
		\begin{enumerate}
			\item a *-algebra $(A,\Delta)$ with $\Delta$ a comultiplication;
			\item a \emph{counit} $\epsilon:A\to\mathbb{C}$, which is a *-homomorphism such that
			\[
			(\epsilon\otimes\iota)\Delta(a)=a=(\iota\otimes\epsilon)\Delta(a),~~a\in A;
			\]
			\item a \emph{self-adjoint faithful left integral} $\varphi:A\to\mathbb{C}$:
			\[
			\varphi(a^*)=\overline{\varphi(a)},~~a\in A,
			\]
			\[
			(\iota\otimes\varphi)\Delta(a)=\varphi(a)1_{M(A)},~~a\in A,
			\]
			which is \emph{faithful} in the following sense: $a\in A$ must be 0 whenever either $\varphi(ab)=0$ for all $b\in A$ or $\varphi(ba)=0$ for all $b\in A$;
			\item an \emph{antipode $S$ relative to $\varphi$}:
			\[
			S((\iota\otimes\varphi)(\Delta(a)(1\otimes b)))=(\iota\otimes\varphi)((1\otimes a)\Delta(b)),~~a,b\in A.
			\]
		\end{enumerate}  
	\end{defn}
	
	We remark here that the left integrals are unique, under the assumption of the existence of $\varphi$ and the antipode $S$ relative to $\varphi$. Moreover, one can show that the antipode $S$ verifies $S(S(x)^*)^*=x$ for all $x\in A$. In the sequel we use simply $(A,\Delta)$ for short to denote a *-algebraic quantum hypergroup.
	
	Now given a *-algebraic quantum hypergroup $(A,\Delta)$, we explain how to construct its dual $(\hat{A},\hat{\Delta})$, which is again a *-algebraic quantum hypergroup.
	
	We begin with $\hat{A}$. It is defined as the space of the linear functionals on $A$ of the form $\varphi(\cdot a)$ for some $a\in A$. The multiplication $\star$ on $\hat{A}$ is defined in the usual way: $\omega\star\omega':=(\omega\otimes\omega')\hat{\Delta}$ for $\omega,\omega'\in\hat{A}$. For each $\omega\in\hat{A}$, let $\omega^*:=\overline{\omega(S(\cdot)^*)}$. The comultiplication $\hat{\Delta}$ on $\hat{A}$ is given through the formula: $\hat{\Delta}(\omega)(x\otimes y):=\omega(xy)$ for all $\omega\in\hat{A}$ and $x,y\in A$. We should be very careful here since we aim to define $\hat{\Delta}(\omega)$ in $M(\hat{A}\otimes\hat{A})$, instead of $\hat{A}\otimes\hat{A}$. See the discussions before \cite[Proposition 3.7]{Algebraicquantumhypergroups}. The counit $\hat{\epsilon}$ on $\hat{A}$ is given via the formula $\hat{\epsilon}(\varphi(\cdot a))=\varphi(a)$, for all $a\in A$.
	
	To introduce the left integral on $\hat{A}$, we need the \emph{right integral} on $A$. Set $\psi:=\varphi\circ S$, then it is a \emph{right integral}:
	\[
	(\psi\otimes\iota)\Delta(a)=\varphi(a)1_{M(A)},~~a\in A,
	\]
	It can be shown that 
	\[
	\{\varphi(\cdot a):a\in A\}=\{\varphi(a\cdot):a\in A\}=\{\psi(\cdot a):a\in A\}=\{\psi(a\cdot):a\in A\}.
	\]
	Now we can define left integral $\hat{\varphi}$ on $\hat{A}$ as $\hat{\varphi}(\psi(a\cdot)):=\epsilon(a),a\in A$. Finally, the antipode $\hat{S}$ relative to $\hat{\varphi}$ is given by $\hat{S}(\omega):=\omega\circ S$ for all $\omega\in\hat{A}$. Thus $(\hat{A},\hat{\Delta})$ forms also a *-algebraic quantum hypergroup \cite[Theorem 3.11]{Algebraicquantumhypergroups}.
	
	By taking the dual of $(\hat{A},\hat{\Delta})$ again, we recover $(A,\Delta)$. This is the biduality for  *-algebraic quantum hypergroups \cite[Theorem 3.12]{Algebraicquantumhypergroups}. Moreover, a *-algebraic quantum hypergroup is \emph{of compact type} if and only if its dual is \emph{of discrete type}. Here, a *-algebraic quantum hypergroup $(A,\Delta)$ is said to be \emph{of compact type} if $A$ possess an identity 1 (and thus $\Delta(1)=1\otimes1$). And a *-algebraic quantum hypergroup $(A,\Delta)$ is \emph{of discrete type} if it is equipped with a \emph{co-integral} $h\in A$, that is a non-zero element such that $ah=\epsilon(a)h$ for all $a\in A$.
	
	We close this subsection with a question proposed by Delvaux and Van Daele at the end of \cite{Algebraicquantumhypergroups}. Certainly a compact quantum hypergroup is a *-algebraic quantum hypergroup is of compact type. But will a *-algebraic quantum hypergroup of compact type with positive integrals always yield a compact quantum hypergroup?
	
	\subsubsection{Finite quantum hypergroups}
	Our main result lies in the framework of \emph{finite quantum hypergroups}, which are compact quantum hypergroups whose underlying C*-algebras are finite dimensional. The reader should be careful that this notion is different from the one in \cite{Idempotentstatesoncqg}, where one discusses a similar concept introduced only on the algebraic level. 
	
	In a finite quantum hypergroup $\mathbb{H}=(A,\Delta)$, the set of matrix elements of all inequivalent irreducible $^\dagger$-representations $\{u^\alpha_{ij}:1\leq i,j\leq n_\alpha,\alpha\in\Irr(\mathbb{H})\}$ form a basis of $A$. The underlying C*-algebra $\hat{A}$ of its dual (as a *-algebraic quantum hypergroup) $(\hat{A},\hat{\Delta})$, is nothing but $A^*$. Let $\omega^{\alpha}_{ij}$ be the dual basis of $u^{\alpha}_{ij}$ in $A^*$, then from  
	\[
	\Delta(u^\alpha_{ij})=\sum_{k=1}^{n_{\alpha}}u^\alpha_{ik}\otimes u^\alpha_{kj},~~1\leq i,j\leq n_\alpha,\alpha\in\Irr(\mathbb{H}),
	\]
	it follows that
	\[
	\omega^{\alpha}_{ij}\cdot\omega^{\beta}_{kl}(u^{\gamma}_{pq})=\delta_{\alpha\beta}\delta_{jk}\omega^{\alpha}_{il}(u^{\gamma}_{pq}),
	\]
	which yields directly $\omega^{\alpha}_{ij}\cdot\omega^{\beta}_{kl}=\delta_{\alpha\beta}\delta_{jk}\omega^{\alpha}_{il}$. Moreover, 
	\[
	(\omega^{\alpha}_{ij})^*(u^{\beta}_{kl})
	=\overline{\omega^{\alpha}_{ij}(S(u^{\beta}_{kl})^*)}
	=\overline{\omega^{\alpha}_{ij}((u^{\beta}_{kl})^\dagger)}
	=\overline{\omega^{\alpha}_{ij}(u^{\beta}_{lk})}
	=\omega^{\alpha}_{ji}(u^{\beta}_{kl})
	\]
	gives $(\omega^{\alpha}_{ij})^*=\omega^{\alpha}_{ji}$. Hence $\{\omega^\alpha_{ij}:1\leq i,j\leq n_\alpha,\alpha\in\Irr(\mathbb{H})\}$ can be viewed as the matrix units of the dual of $\mathbb{H}$.
	
	\section{Examples of compact quantum hypergroups}
	In this section we present some new constructions of compact quantum hypergroups, which will be of use in the later sections of the paper.
	
	\subsection{Examples from idempotent states}
	
	Suppose that $\mathbb{G}=(A,\Delta)$ is a compact quantum group. Denote by $A^*$ the set of all bounded linear functionals on $A$. For any $\varphi_1,\varphi_2\in A^*$, we can define their \textit{convolution product}, as we did on $\Pol(\mathbb{G})$:
	\[
	\varphi_1\star\varphi_2:=(\varphi_1\otimes\varphi_2)\Delta.
	\]
	We ignore $\star$ if no ambiguity occurs. 
	
	Denote by $\mathcal{S}(A)$ the set of states on $A$. A state $\phi\in\mathcal{S}(A)$ is said to be an \textit{idempotent state} on $A$ if
	\[
	\phi^2=(\phi\otimes\phi)\Delta=\phi.
	\]  
	We use $\text{Idem}\left(\mathbb{G}\right)$ to denote the set of idempotent states on $\mathbb{G}$. Observe first that if $\phi\in\text{Idem}(\mathbb{G})$, we have $\phi=\phi S$ on $\Pol(\mathbb{G})$ \cite{Idempotentstatesoncqg}. In other words, $\hat{\phi}(\alpha)$ is a projection for each $\alpha\in\text{Irr}(\mathbb{G})$; as it is contractive, it is also self-adjoint. 
	
	Now we turn to the study of coidalgebras. Most of the results in this subsection can be found in \cite{preprintofXumin} and \cite{Newcharacterisationofidempotentstates}. A \textit{left (resp. right) coidealgebra} $C$ in a compact quantum group $\mathbb{G}=(A,\Delta)$ is a unital $C^*$-subalgebra in $A$ such that $\Delta(C)\subset C\otimes A$ (resp. $\Delta(C)\subset A\otimes C$).
	
	The first lemma is a special case of \cite[Lemma 3.1]{Newcharacterisationofidempotentstates}, and also a variation of \cite[Lemma 4.3]{CompactquantumgroupsVanDaele}.
	
	\begin{lem}\label{property of idempotent state}
		Let $\phi\in\text{Idem}\left(\mathbb{G}\right)$. For $b\in A$ define $\phi_b\left(a\right):=\phi\left(ab\right)$ for all $a\in A$. Then we have
		\[
		\phi\star\phi_b=\phi\left(b\right)\phi.
		\]
	\end{lem}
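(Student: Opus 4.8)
The plan is to rewrite both sides using the slice map $F:=(\phi\otimes\iota)\Delta:A\to A$. For $a\in\Pol(\mathbb{G})$, writing $\Delta(a)=\sum a_{(1)}\otimes a_{(2)}$ and using $\phi_b(x)=\phi(xb)$,
\[
(\phi\star\phi_b)(a)=(\phi\otimes\phi_b)\Delta(a)=\sum\phi(a_{(1)})\phi(a_{(2)}b)=\phi\!\left(F(a)b\right),
\]
so the assertion $\phi\star\phi_b=\phi(b)\phi$ is equivalent to the identity $\phi(F(a)b)=\phi(a)\phi(b)$ for all $a,b$. Since $F$ is contractive and both sides are norm continuous in $a$ and in $b$, it is enough to verify this for $a,b\in\Pol(\mathbb{G})$.

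Idempotency supplies two facts at once. First, $\phi\circ F=\phi$ on $A$, because $\phi(F(a))=(\phi\otimes\phi)\Delta(a)=\phi(a)$. Second, on $\Pol(\mathbb{G})$ one also has $\epsilon\circ F=\phi$, since $\epsilon(F(a))=(\phi\otimes\epsilon)\Delta(a)=\phi\!\left((\iota\otimes\epsilon)\Delta(a)\right)=\phi(a)$. Put $M:=F(A)=\{x\in A:F(x)=x\}$ and $M_{0}:=F(\Pol(\mathbb{G}))=M\cap\Pol(\mathbb{G})$; because $\Delta$ preserves $\Pol(\mathbb{G})$ we have $M_{0}\subseteq\Pol(\mathbb{G})$ with $M_{0}$ norm dense in $M$, and the two displayed facts say precisely that $\phi=\epsilon$ on $M_{0}$.

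The essential structural input, which I would invoke rather than reprove, is that $F=(\phi\otimes\iota)\Delta$ is a conditional expectation of $A$ onto a coideal $C^{*}$-subalgebra, namely $M$; this is the content of \cite[Lemma 3.1]{Newcharacterisationofidempotentstates} (see also \cite{preprintofXumin}, and \cite[Lemma 4.3]{CompactquantumgroupsVanDaele} for the Haar-state prototype, where $M=\mathbb{C}1$). Granting it, $M_{0}$ is a $*$-subalgebra of $\Pol(\mathbb{G})$ on which $\phi=\epsilon$, so $\phi$ is multiplicative on $M_{0}$. Now for $a,b\in\Pol(\mathbb{G})$ we have $F(a),F(b)\in M_{0}$, hence $F(a)F(b)\in M_{0}$, and the conditional-expectation module identity gives $F(F(a)b)=F(a)F(b)$; therefore
\[
\phi(F(a)b)=\phi\!\left(F(F(a)b)\right)=\phi\!\left(F(a)F(b)\right)=\phi(F(a))\phi(F(b))=\phi(a)\phi(b),
\]
using $\phi\circ F=\phi$ at the two ends and multiplicativity of $\phi$ on $M_{0}$ in the middle. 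By density this holds for all $a,b\in A$, which is the claim.

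The step I expect to be the genuine obstacle is precisely this cited one — that $M$ is closed under multiplication, i.e.\ that $F$ is an honest conditional expectation. The Hopf-algebraic manipulations only ever reduce ``$\phi$ is multiplicative against $\Delta(a)(1\otimes b)$'' to ``$\phi$ is multiplicative on $M$''; equivalently, one may first reduce to $a\in M$ (using that $(\phi\star\phi_b)\circ F=\phi\star\phi_b$, again by idempotency) and then, via the Cauchy--Schwarz estimate coming from $(\phi\star\phi_b)(a)-\phi(a)\phi(b)=(\phi\otimes\phi)\!\left((\Delta(a)-\phi(a)1\otimes1)(1\otimes b)\right)$ together with $(\phi\otimes\phi)\!\left((\Delta(a)-\phi(a))(\Delta(a)-\phi(a))^{*}\right)=\phi(aa^{*})-|\phi(a)|^{2}$, reduce to $\phi(aa^{*})=|\phi(a)|^{2}$ for $a\in M$ — and closing that loop really does require the structure theory of idempotent states and their associated expected subalgebras.
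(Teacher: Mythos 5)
Your formal reductions are all sound: rewriting $(\phi\star\phi_b)(a)=\phi(F(a)b)$ with $F=(\phi\otimes\iota)\Delta=\mathbb{E}_\phi^\ell$, observing $F^2=F$, $\phi\circ F=\phi$ and $\epsilon\circ F=\phi$ on $\Pol(\mathbb{G})$, and isolating the single remaining claim that $M=F(A)$ is multiplicatively closed (equivalently, that $\phi(aa^*)=|\phi(a)|^2$ for $a\in M$). The difficulty is that this remaining claim is exactly where the content of the lemma lives, and the way you discharge it is circular relative to both this paper and the sources you cite. In the present paper, the multiplicativity of $\mathbb{E}_\phi^\ell(A)$ is Lemma \ref{property of E^l&E^r}(5), and its proof uses \eqref{Equation2}, which \emph{is} the identity $\phi\star\phi_b=\phi(b)\phi$ you are trying to establish; the same order of dependence occurs in \cite{Newcharacterisationofidempotentstates} and \cite{Idempotentstatesoncqg}, where the expected coideal subalgebra is constructed \emph{from} this invariance property. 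Note also that \cite[Lemma 3.1]{Newcharacterisationofidempotentstates} is not the statement ``$F$ is a conditional expectation onto a coideal subalgebra'': the paper explicitly says the present lemma is a \emph{special case} of that Lemma 3.1, so invoking it here amounts to citing (a generalization of) the statement to be proven. Your closing paragraph essentially concedes that the loop is not closed.

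For comparison, the paper itself offers no argument at all; it refers to \cite[Lemma 3.1]{Newcharacterisationofidempotentstates} and to \cite[Lemma 4.3]{CompactquantumgroupsVanDaele}. The genuine proof in those sources adapts Van Daele's argument for bi-invariance of the Haar state: one shows directly, without any prior knowledge of the algebra structure of $M$, that $y=F(a)-\phi(a)1$ satisfies $\phi(yy^*)=0$ (so that Cauchy--Schwarz, exactly as in your last paragraph, finishes the proof), and only afterwards deduces that $F$ is a conditional expectation onto a subalgebra. So the concrete gap is the step ``$M$ is a subalgebra of $A$ under the original product'': it is assumed where it must be proved, it does not follow from $F$ being a unital completely positive idempotent alone (Choi--Effros only gives an algebra structure for the deformed product $x\circ y=F(xy)$), and every proof of it in the literature passes through the very identity at issue. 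To make your write-up into a proof you would need to supply the Van Daele--type argument for $\phi(yy^*)=0$ rather than cite the expected-subalgebra structure theory.
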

	
	For $\phi\in\text{Idem}\left(\mathbb{G}\right)$ set $\mathbb{E}_\phi^\ell:=\left(\phi\otimes\iota\right)\Delta$ and $\mathbb{E}_\phi^r:=\left(\iota\otimes\phi\right)\Delta$. The next lemma lists some useful properties of these maps.
	
	\begin{lem}\label{property of E^l&E^r}
		Let $\mathbb{G}=(A,\Delta)$ be a compact quantum group. Then
		\begin{enumerate}
			\item 
			$\mathbb{E}_\phi^\ell\left(a^*\right)=\mathbb{E}_\phi^\ell\left(a\right)^*,\mathbb{E}_\phi^r\left(a^*\right)=\mathbb{E}_\phi^r\left(a\right)^*,a\in A$.
			\item $\Delta\mathbb{E}_\phi^\ell=(\mathbb{E}_\phi^\ell\otimes\iota)\Delta,\Delta\mathbb{E}_\phi^r=(\iota\otimes\mathbb{E}_\phi^r)\Delta$.
			\item $(\iota\otimes\mathbb{E}_\phi^\ell)\Delta=(\mathbb{E}_\phi^r\otimes\iota)\Delta$.
			\item $\mathbb{E}_\phi^\ell\mathbb{E}_\phi^r=\mathbb{E}_\phi^r\mathbb{E}_\phi^\ell$.
			\item $\mathbb{E}_\phi^\ell(\mathbb{E}_\phi^\ell\left(a\right)\mathbb{E}_\phi^\ell\left(b\right))=\mathbb{E}_\phi^\ell\left(a\right)\mathbb{E}_\phi^\ell\left(b\right),
			\mathbb{E}_\phi^r(\mathbb{E}_\phi^r\left(a\right)\mathbb{E}_\phi^r\left(b\right))=\mathbb{E}_\phi^r\left(a\right)\mathbb{E}_\phi^r\left(b\right),a,b\in A$. Consequently, $\mathbb{E}_\phi^\ell\mathbb{E}_\phi^ \ell=\mathbb{E}_\phi^\ell$, $\mathbb{E}_\phi^r\mathbb{E}_\phi^r=\mathbb{E}_\phi^r$.
		\end{enumerate}
		Hence $\mathbb{E}_\phi^\ell\left(A\right)$ is a left coidalgebra and $\mathbb{E}_\phi^r\left(A\right)$ is a right coidalgebra of $A$.
	\end{lem}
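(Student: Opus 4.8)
The plan is to dispose of (1)--(4) quickly as formal consequences of $\Delta$ being a unital $*$-homomorphism together with coassociativity $(\Delta\otimes\iota)\Delta=(\iota\otimes\Delta)\Delta$, and then to concentrate all the real work on (5), after which the coidealgebra statement follows formally. Throughout I would use that the slice maps $\phi\otimes\iota$, $\iota\otimes\phi\colon A\otimes A\to A$ are bounded (of norm $\le1$, since $\phi$ is a state), so the Sweedler-type manipulations below — written with $\Delta(a)=\sum a_{[1]}\otimes a_{[2]}$ — are justified after first checking them on the dense Hopf $*$-subalgebra $\Pol(\mathbb{G})$, where honest finite Sweedler sums exist, and then extending by continuity.

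For (1), combine $\Delta(a^*)=\Delta(a)^*$ with $\phi(x^*)=\overline{\phi(x)}$. For (2), $\Delta\mathbb{E}_\phi^\ell=\Delta(\phi\otimes\iota)\Delta=(\phi\otimes\iota\otimes\iota)(\iota\otimes\Delta)\Delta=(\phi\otimes\iota\otimes\iota)(\Delta\otimes\iota)\Delta=(\mathbb{E}_\phi^\ell\otimes\iota)\Delta$, the middle equality being coassociativity, and the second identity in (2) is its mirror; similarly (3) reads $(\iota\otimes\mathbb{E}_\phi^\ell)\Delta=(\iota\otimes\phi\otimes\iota)(\iota\otimes\Delta)\Delta=(\iota\otimes\phi\otimes\iota)(\Delta\otimes\iota)\Delta=(\mathbb{E}_\phi^r\otimes\iota)\Delta$. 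For (4) I would check that both sides equal $(\phi\otimes\iota\otimes\phi)\Delta^{(2)}$: using (2) and its mirror, $\mathbb{E}_\phi^\ell\mathbb{E}_\phi^r=(\phi\otimes\iota)(\iota\otimes\mathbb{E}_\phi^r)\Delta=(\phi\otimes\iota\otimes\phi)\Delta^{(2)}=(\iota\otimes\phi)(\mathbb{E}_\phi^\ell\otimes\iota)\Delta=\mathbb{E}_\phi^r\mathbb{E}_\phi^\ell$.

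For (5) I would argue in two steps. First, $\mathbb{E}_\phi^\ell\mathbb{E}_\phi^\ell=\mathbb{E}_\phi^\ell$: by (2), $\mathbb{E}_\phi^\ell\mathbb{E}_\phi^\ell=(\phi\otimes\iota)(\mathbb{E}_\phi^\ell\otimes\iota)\Delta=((\phi\circ\mathbb{E}_\phi^\ell)\otimes\iota)\Delta$, and $\phi\circ\mathbb{E}_\phi^\ell=(\phi\otimes\phi)\Delta=\phi\star\phi=\phi$ because $\phi$ is idempotent, so $\mathbb{E}_\phi^\ell\mathbb{E}_\phi^\ell=\mathbb{E}_\phi^\ell$; likewise $\mathbb{E}_\phi^r\mathbb{E}_\phi^r=\mathbb{E}_\phi^r$. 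Second, I would prove the stronger module identity $\mathbb{E}_\phi^\ell(\mathbb{E}_\phi^\ell(a)x)=\mathbb{E}_\phi^\ell(a)\mathbb{E}_\phi^\ell(x)$ for all $a,x\in A$. Writing $\Delta^{(2)}(a)=\sum a_{(1)}\otimes a_{(2)}\otimes a_{(3)}$ and $\Delta(x)=\sum x_{(1)}\otimes x_{(2)}$, by (2) one has $\Delta(\mathbb{E}_\phi^\ell(a))=\sum\phi(a_{(1)})\,a_{(2)}\otimes a_{(3)}$, so that, $\Delta$ being multiplicative,
\[
\mathbb{E}_\phi^\ell(\mathbb{E}_\phi^\ell(a)x)=(\phi\otimes\iota)\bigl(\Delta(\mathbb{E}_\phi^\ell(a))\Delta(x)\bigr)=\sum\phi(a_{(1)})\,\phi(a_{(2)}x_{(1)})\,a_{(3)}x_{(2)}.
\]
With $x_{(1)}$ fixed, put $\phi_{x_{(1)}}(\cdot):=\phi(\cdot\,x_{(1)})$ as in Lemma \ref{property of idempotent state}; then the $a$-sum $\sum\phi(a_{(1)})\phi(a_{(2)}x_{(1)})a_{(3)}$ equals $((\phi\star\phi_{x_{(1)}})\otimes\iota)\Delta(a)$, and Lemma \ref{property of idempotent state} gives $\phi\star\phi_{x_{(1)}}=\phi(x_{(1)})\phi$, hence this is $\phi(x_{(1)})\,\mathbb{E}_\phi^\ell(a)$. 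Substituting back,
\[
\mathbb{E}_\phi^\ell(\mathbb{E}_\phi^\ell(a)x)=\mathbb{E}_\phi^\ell(a)\sum\phi(x_{(1)})\,x_{(2)}=\mathbb{E}_\phi^\ell(a)\,\mathbb{E}_\phi^\ell(x).
\]
Specializing $x=\mathbb{E}_\phi^\ell(b)$ and using $\mathbb{E}_\phi^\ell\mathbb{E}_\phi^\ell=\mathbb{E}_\phi^\ell$ yields (5) for $\mathbb{E}_\phi^\ell$, and the $\mathbb{E}_\phi^r$ case follows by the mirror computation — or, more cleanly, by applying everything to the co-opposite compact quantum group $(A,\Pi\circ\Delta)$, on which $\phi$ is still idempotent and the roles of $\mathbb{E}_\phi^\ell$ and $\mathbb{E}_\phi^r$ are swapped. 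An alternative for this second step: $\mathbb{E}_\phi^\ell$ is unital, completely positive — a composition of the $*$-homomorphism $\Delta$ with the state-slice $\phi\otimes\iota$ — and idempotent, so by Tomiyama's theorem it is a conditional expectation onto its image, which at once gives the bimodule property and hence (5).

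For the final assertion, $\mathbb{E}_\phi^\ell(A)$ contains $1=\mathbb{E}_\phi^\ell(1)$, is $*$-closed by (1), is closed under products by (5), and is norm-closed since it is the range of the bounded idempotent $\mathbb{E}_\phi^\ell$ (equivalently, $\mathbb{E}_\phi^\ell(A)=\ker(\iota-\mathbb{E}_\phi^\ell)$); so it is a unital $C^*$-subalgebra, and $\Delta(\mathbb{E}_\phi^\ell(a))=(\mathbb{E}_\phi^\ell\otimes\iota)\Delta(a)\in\mathbb{E}_\phi^\ell(A)\otimes A$ by (2) makes it a left coidealgebra; the right-handed statement is the mirror. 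The step I expect to be the main obstacle is the module identity in (5) — specifically, recognizing the scalar $a$-sum as $((\phi\star\phi_{x_{(1)}})\otimes\iota)\Delta(a)$ so that Lemma \ref{property of idempotent state} applies; everything else is bookkeeping with coassociativity and boundedness of the slice maps.
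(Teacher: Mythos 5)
Your main argument is correct and is essentially the paper's proof of (5) in different clothing: the paper verifies the identity on matrix coefficients $u^\alpha_{ij}$, $u^\beta_{kl}$ using $\phi\star\phi=\phi$ and $\phi\star\phi_b=\phi(b)\phi$ (Lemma \ref{property of idempotent state}), which is exactly your Sweedler-notation computation on $\Pol(\mathbb{G})$ followed by density; your version in fact records the slightly stronger module identity $\mathbb{E}_\phi^\ell(\mathbb{E}_\phi^\ell(a)x)=\mathbb{E}_\phi^\ell(a)\mathbb{E}_\phi^\ell(x)$ for all $x$, which is a clean way to package the same estimate. Items (1)--(4) and the coidealgebra conclusion are the routine computations the paper omits, and yours are fine.

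One caveat: the proposed ``alternative'' for (5) via Tomiyama's theorem is circular and should be dropped. Tomiyama's theorem applies to a norm-one projection onto a $C^*$-\emph{subalgebra}, but the fact that $\mathbb{E}_\phi^\ell(A)$ is closed under multiplication is precisely what (5) is establishing; a unital completely positive idempotent on a $C^*$-algebra generally has range that is only an operator system (it becomes an algebra only for the Choi--Effros product $x\circ y=\mathbb{E}_\phi^\ell(xy)$, not a priori for the ambient product). So the convolution argument you give first is not just the main obstacle but the only valid route here.
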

	
	\begin{proof}
		(1)-(4) are just straightforward computations. For (5) we prove the statement only for $\mathbb{E}_\phi^\ell$, as the proof for $\mathbb{E}_\phi^r$ is similar. For this note that it suffices to show the first equation for any $a,b$ the coefficients of unitary representation of $\mathbb{G}$. The case for general $a,b$ follows from the density argument. Let $\left\{u^\alpha_{ij},1\leq i,j\leq n_\alpha\right\}$ be the coefficients of the irreducible representation $u^\alpha,\alpha\in\Irr\left(\mathbb{G}\right)$. Since $\phi\star\phi=\phi$, we have
		\begin{equation}\label{Equation1}
		\phi\left(u^\alpha_{ij}\right)=\sum_{k=1}^{n_\alpha}\phi\left(u^\alpha_{ik}\right)\phi\left(u^\alpha_{kj}\right),~~1\leq i,j\leq n_\alpha,\alpha\in\Irr\left(\mathbb{G}\right).
		\end{equation}
		From Lemma \ref{property of idempotent state} we have for any $b\in A$
		\begin{equation}\label{Equation2}
		\phi\left(b\right)\phi\left(u^\alpha_{ij}\right)=\sum_{k=1}^{n_\alpha}\phi\left(u^\alpha_{ik}\right)\phi\left(u^\alpha_{kj}b\right),~~1\leq i,j\leq n_\alpha,\alpha\in\Irr\left(\mathbb{G}\right).
		\end{equation}
		So for any $u^\alpha_{ij}$ and $u^\beta_{kl}$ we have
		\[
		\mathbb{E}_\phi^\ell(u^\alpha_{ij})\mathbb{E}_\phi^\ell(u^\beta_{kl})
		=\sum_{s=1}^{n_\alpha}\sum_{t=1}^{n_\beta}\phi(u^\alpha_{is})\phi(u^\beta_{kt})u^\alpha_{sj}u^\beta_{tl},
		\]
		and 
		\begin{align*}
		&\mathbb{E}_\phi^\ell\left(\mathbb{E}_\phi^\ell\left(u^\alpha_{ij}\right)\mathbb{E}_\phi^\ell(u^\beta_{kl})\right)\\
		=&\sum_{s=1}^{n_\alpha}\sum_{t=1}^{n_\beta}\phi(u^\alpha_{is})\phi(u^\beta_{kt})\left(\phi\otimes\iota\right)\left(\sum_{p=1}^{n_\alpha}\sum_{q=1}^{n_\beta}u^\alpha_{sp}u^\beta_{tq}\otimes u^\alpha_{pj}u^\beta_{ql}\right)\\
		=&\sum_{s,p=1}^{n_\alpha}\sum_{t,q=1}^{n_\beta}\phi(u^\alpha_{is})\phi(u^\beta_{kt})\phi(u^\alpha_{sp}u^\beta_{tq})u^\alpha_{pj}u^\beta_{ql}\\
		=&\sum_{p=1}^{n_\alpha}\sum_{t,q=1}^{n_\beta}\phi(u^\beta_{kt})\left(\sum_{s=1}^{n_\alpha}\phi(u^\alpha_{is})\phi(u^\alpha_{sp}u^\beta_{tq})\right)u^\alpha_{pj}u^\beta_{ql}.
		\end{align*}	
		Now apply \eqref{Equation2} for $b=u^\beta_{tq}$ and use \eqref{Equation1}, we get
		\begin{align*}
		\mathbb{E}_\phi^\ell\left(\mathbb{E}_\phi^\ell\left(u^\alpha_{ij}\right)\mathbb{E}_\phi^\ell(u^\beta_{kl})\right)
		=&\sum_{p=1}^{n_\alpha}\sum_{t,q=1}^{n_\beta}\phi(u^\beta_{kt})\phi(u^\alpha_{ip})\phi(u^\beta_{tq})u^\alpha_{pj}u^\beta_{ql}\\
		=&\sum_{p=1}^{n_\alpha}\sum_{q=1}^{n_\beta}\phi(u^\alpha_{ip})\left(\sum_{t=1}^{n_\beta}\phi(u^\beta_{kt})\phi(u^\beta_{tq})\right)u^\alpha_{pj}u^\beta_{ql}\\
		=&\sum_{p=1}^{n_\alpha}\sum_{q=1}^{n_\beta}\phi(u^\alpha_{ip})\phi(u^\beta_{kq})u^\alpha_{pj}u^\beta_{ql}.
		\end{align*}
		Hence $\mathbb{E}_\phi^\ell\left(\mathbb{E}_\phi^\ell\left(u^\alpha_{ij}\right)\mathbb{E}_\phi^\ell(u^\beta_{kl})\right)=\mathbb{E}_\phi^\ell\left(u^\alpha_{ij}\right)\mathbb{E}_\phi^\ell(u^\beta_{kl})$. The remaining part is a consequence of the equality  $\mathbb{E}_\phi^\ell(1)=1$.
	\end{proof}
	
	Now we define $\mathbb{E}_\phi:=\mathbb{E}_\phi^\ell\mathbb{E}_\phi^r=\mathbb{E}_\phi^r\mathbb{E}_\phi^\ell$. The map $\mathbb{E}_\phi$ shares the similar properties of $\mathbb{E}_\phi^\ell$ and $\mathbb{E}_\phi^r$, see the following lemma (1)-(4). Moreover, $\mathbb{E}_\phi$ commutes with the antipode $S$, as the following lemma (5) shows, which means that the algebra $A_{\phi}:=\mathbb{E}_\phi(A)$ possesses nicer properties than $\mathbb{E}_\phi^\ell(A)$ and $\mathbb{E}_\phi^r(A)$. That is why we use the map $\mathbb{E}_\phi$ in the remaining part of the paper.
	
	\begin{lem}\label{property of E}
		Let $\mathbb{G}=(A,\Delta)$ be a compact quantum group. Then
		\begin{enumerate}
			\item $\mathbb{E}_\phi\left(a^*\right)=\mathbb{E}_\phi\left(a\right)^*,a\in A$. 
			\item $\Delta\mathbb{E}_\phi=(\mathbb{E}_\phi^\ell\otimes\mathbb{E}_\phi^r)\Delta$.
			\item $(\mathbb{E}_\phi\otimes\mathbb{E}_\phi)\Delta=(\iota\otimes\mathbb{E}_\phi^\ell)\Delta\mathbb{E}_\phi=(\mathbb{E}_\phi^r\otimes\iota)\Delta\mathbb{E}_\phi$.
			\item 
			$\mathbb{E}_\phi\left(\mathbb{E}_\phi\left(a\right)\mathbb{E}_\phi\left(b\right)\right)=\mathbb{E}_\phi\left(a\right)\mathbb{E}_\phi\left(b\right),a,b\in A$. Consequently, $\mathbb{E}_\phi\mathbb{E}_\phi=\mathbb{E}_\phi$.
		\end{enumerate}
		Hence $A_{\phi}$ is a unital $C^*$-subalgebra of $A$. Moreover, 
		\begin{enumerate}
			\item [(5)] $S\mathbb{E}_\phi=\mathbb{E}_\phi S$ on $\Pol(\mathbb{G})$.
		\end{enumerate}
	\end{lem}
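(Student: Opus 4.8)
The plan is to derive parts (1)--(5) from Lemma~\ref{property of E^l&E^r}, the definition $\mathbb{E}_\phi=\mathbb{E}_\phi^\ell\mathbb{E}_\phi^r=\mathbb{E}_\phi^r\mathbb{E}_\phi^\ell$, and the idempotency of $\mathbb{E}_\phi^\ell$ and $\mathbb{E}_\phi^r$; most of the work is just rearranging intertwining relations. Part (1) is immediate, since $\mathbb{E}_\phi$ is a composite of the two $*$-preserving maps $\mathbb{E}_\phi^\ell$, $\mathbb{E}_\phi^r$ (Lemma~\ref{property of E^l&E^r}(1)). For (2), apply the two relations of Lemma~\ref{property of E^l&E^r}(2) in turn:
\[
\Delta\mathbb{E}_\phi=\Delta\mathbb{E}_\phi^\ell\mathbb{E}_\phi^r=(\mathbb{E}_\phi^\ell\otimes\iota)\Delta\mathbb{E}_\phi^r=(\mathbb{E}_\phi^\ell\otimes\iota)(\iota\otimes\mathbb{E}_\phi^r)\Delta=(\mathbb{E}_\phi^\ell\otimes\mathbb{E}_\phi^r)\Delta.
\]

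For (3), the second equality is immediate: precomposing the identity $(\iota\otimes\mathbb{E}_\phi^\ell)\Delta=(\mathbb{E}_\phi^r\otimes\iota)\Delta$ of Lemma~\ref{property of E^l&E^r}(3) with $\mathbb{E}_\phi$ yields $(\iota\otimes\mathbb{E}_\phi^\ell)\Delta\mathbb{E}_\phi=(\mathbb{E}_\phi^r\otimes\iota)\Delta\mathbb{E}_\phi$. For the first equality, (2) gives $(\iota\otimes\mathbb{E}_\phi^\ell)\Delta\mathbb{E}_\phi=(\iota\otimes\mathbb{E}_\phi^\ell)(\mathbb{E}_\phi^\ell\otimes\mathbb{E}_\phi^r)\Delta=(\mathbb{E}_\phi^\ell\otimes\mathbb{E}_\phi)\Delta$, so it remains to prove $(\mathbb{E}_\phi^\ell\otimes\mathbb{E}_\phi)\Delta=(\mathbb{E}_\phi\otimes\mathbb{E}_\phi)\Delta$. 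This is the one place where idempotency of $\phi$ enters non-formally, and I expect it to be the main obstacle. I would establish the auxiliary identities $(\mathbb{E}_\phi^\ell\otimes\mathbb{E}_\phi^\ell)\Delta=(\mathbb{E}_\phi\otimes\iota)\Delta$ and $(\mathbb{E}_\phi^r\otimes\mathbb{E}_\phi^r)\Delta=(\iota\otimes\mathbb{E}_\phi)\Delta$; for the first,
\[
(\mathbb{E}_\phi^\ell\otimes\mathbb{E}_\phi^\ell)\Delta=(\iota\otimes\mathbb{E}_\phi^\ell)\Delta\mathbb{E}_\phi^\ell=(\mathbb{E}_\phi^r\otimes\iota)\Delta\mathbb{E}_\phi^\ell=(\mathbb{E}_\phi^r\mathbb{E}_\phi^\ell\otimes\iota)\Delta=(\mathbb{E}_\phi\otimes\iota)\Delta
\]
by Lemma~\ref{property of E^l&E^r}(2),(3), and the second is the mirror computation. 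Granting these, $(\mathbb{E}_\phi^r\otimes\mathbb{E}_\phi)\Delta=(\iota\otimes\mathbb{E}_\phi^\ell)(\mathbb{E}_\phi^r\otimes\mathbb{E}_\phi^r)\Delta=(\iota\otimes\mathbb{E}_\phi^\ell\mathbb{E}_\phi)\Delta=(\iota\otimes\mathbb{E}_\phi)\Delta$, since $\mathbb{E}_\phi^\ell\mathbb{E}_\phi=\mathbb{E}_\phi$, and hence $(\mathbb{E}_\phi\otimes\mathbb{E}_\phi)\Delta=(\mathbb{E}_\phi^\ell\otimes\iota)(\mathbb{E}_\phi^r\otimes\mathbb{E}_\phi)\Delta=(\mathbb{E}_\phi^\ell\otimes\mathbb{E}_\phi)\Delta$. (Alternatively, $(\mathbb{E}_\phi^\ell\otimes\mathbb{E}_\phi)\Delta=(\mathbb{E}_\phi\otimes\mathbb{E}_\phi)\Delta$ can be verified directly on matrix coefficients $u^\alpha_{ij}$ of irreducible representations using \eqref{Equation1} and \eqref{Equation2}, as in the proof of Lemma~\ref{property of E^l&E^r}(5).)

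For (4), I would use the description $A_\phi=\mathbb{E}_\phi(A)=\mathbb{E}_\phi^\ell(A)\cap\mathbb{E}_\phi^r(A)$: the inclusion $\subseteq$ follows from $\mathbb{E}_\phi=\mathbb{E}_\phi^\ell\mathbb{E}_\phi^r=\mathbb{E}_\phi^r\mathbb{E}_\phi^\ell$, while if $z$ lies in both ranges then $\mathbb{E}_\phi^\ell z=z=\mathbb{E}_\phi^r z$, so $\mathbb{E}_\phi z=z$; in particular $\mathbb{E}_\phi$ restricts to the identity on $A_\phi$. Being the intersection of the two unital $C^*$-subalgebras $\mathbb{E}_\phi^\ell(A)$ and $\mathbb{E}_\phi^r(A)$ (Lemma~\ref{property of E^l&E^r}), $A_\phi$ is itself a unital $C^*$-subalgebra of $A$. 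Then for $a,b\in A$ the product $\mathbb{E}_\phi(a)\mathbb{E}_\phi(b)$ lies in $A_\phi$, hence is fixed by $\mathbb{E}_\phi$, which is the first assertion of (4); since $\mathbb{E}_\phi(a)\in A_\phi$ for all $a$, also $\mathbb{E}_\phi\mathbb{E}_\phi=\mathbb{E}_\phi$.

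Finally, for (5) I would combine the relation $\phi=\phi\circ S$ on $\Pol(\mathbb{G})$ recalled above with the Hopf $*$-algebra identity $\Delta S=(S\otimes S)\Pi\Delta$, where $\Pi$ is the tensor flip. A short computation in Sweedler notation then gives, on $\Pol(\mathbb{G})$, $S\mathbb{E}_\phi^\ell=\mathbb{E}_\phi^r S$ and $S\mathbb{E}_\phi^r=\mathbb{E}_\phi^\ell S$ (one uses $\phi S=\phi$ to move $S$ past the slice by $\phi$). Since $\Pol(\mathbb{G})$ is invariant under $S$, $\mathbb{E}_\phi^\ell$ and $\mathbb{E}_\phi^r$, hence under $\mathbb{E}_\phi$, we conclude
\[
S\mathbb{E}_\phi=S\mathbb{E}_\phi^\ell\mathbb{E}_\phi^r=\mathbb{E}_\phi^r S\mathbb{E}_\phi^r=\mathbb{E}_\phi^r\mathbb{E}_\phi^\ell S=\mathbb{E}_\phi S
\]
on $\Pol(\mathbb{G})$, which is (5).
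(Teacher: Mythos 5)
Your argument is correct in all five parts, and every step you invoke is indeed available from Lemma \ref{property of E^l&E^r} (note that the only place idempotency of $\phi$ really enters your formal manipulations is through $\mathbb{E}_\phi^\ell\mathbb{E}_\phi^\ell=\mathbb{E}_\phi^\ell$, i.e.\ part (5) of that lemma, which you use via $\mathbb{E}_\phi^\ell\mathbb{E}_\phi=\mathbb{E}_\phi$ --- so the obstacle you anticipated in (3) dissolves). The comparison with the paper: the author simply omits (1)--(4) as ``straightforward computations'' and states that the $C^*$-subalgebra property of $A_\phi$ follows from them, whereas you supply the computations and, for (4), invert the logical order --- you first identify $A_\phi=\mathbb{E}_\phi^\ell(A)\cap\mathbb{E}_\phi^r(A)$ as an intersection of the two coidealgebras already known to be unital $C^*$-subalgebras, and then read off multiplicativity-stability of $\mathbb{E}_\phi$ and $\mathbb{E}_\phi\mathbb{E}_\phi=\mathbb{E}_\phi$ from the fact that $\mathbb{E}_\phi$ fixes its range; this is a clean and slightly more structural route. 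For (5) the paper computes $S\mathbb{E}_\phi(u^\alpha_{ij})$ and $\mathbb{E}_\phi S(u^\alpha_{ij})$ explicitly on matrix coefficients via $\Delta^{(2)}$, while you work coordinate-free through the intermediate intertwinings $S\mathbb{E}_\phi^\ell=\mathbb{E}_\phi^r S$ and $S\mathbb{E}_\phi^r=\mathbb{E}_\phi^\ell S$; the two proofs use exactly the same ingredients ($\phi=\phi S$ on $\Pol(\mathbb{G})$ and the anti-comultiplicativity $\Delta S=(S\otimes S)\Pi\Delta$ of the antipode), so this is a repackaging rather than a new idea, but your version isolates the useful extra fact that $S$ swaps $\mathbb{E}_\phi^\ell$ and $\mathbb{E}_\phi^r$.
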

	
	\begin{proof}
		Again we omit the proof of (1)-(4) here. The fact that $A_{\phi}$ is a unital $C^*$-subalgebra follows directly from these properties. To prove (5), it suffices to check the equality for $u^\alpha_{ij}$, the coefficients of unitary representation of $\mathbb{G}$, where $1\leq i,j\leq n_\alpha,\alpha\in\text{Irr}(\mathbb{G})$. And that is a consequence of 
		\begin{align*}
		S\mathbb{E}_\phi(u^\alpha_{ij})
		&=S(\phi\otimes\iota\otimes\phi)\Delta^{(2)}(u^\alpha_{ij})\\
		&=\sum_{k,l=1}^{n_\alpha}\phi(u^\alpha_{ik})\phi(u^\alpha_{lj})S(u^\alpha_{kl})\\
		&=\sum_{k,l=1}^{n_\alpha}\phi(u^\alpha_{ik})\phi(u^\alpha_{lj})(u^\alpha_{lk})^*,
		\end{align*}
		and 
		\begin{align*}
		\mathbb{E}_\phi S(u^\alpha_{ij})
		&=(\phi\otimes\iota\otimes\phi)\Delta^{(2)}((u^\alpha_{ji})^*)\\
		&=\sum_{k,l=1}^{n_\alpha}\phi((u^\alpha_{ki})^*)\phi((u^\alpha_{jl})^*)(u^\alpha_{lk})^*\\
		&=\sum_{k,l=1}^{n_\alpha}\phi(u^\alpha_{ik})\phi(u^\alpha_{lj})(u^\alpha_{lk})^*,
		\end{align*}
		where the last equality follows from the facts that $\phi=\phi S$ on $\Pol(\mathbb{G})$ and $S(u^\alpha_{lj})=(u^\alpha_{jl})^*$.
	\end{proof}
	
	The following proposition says that  $\mathbb{E}_\phi$ as above  is a projection verifying all the conditions in Theorem \ref{New examples}. Thus $(A_{\phi},\Delta_{\phi})$ becomes a compact quantum hypergroup. 
	
	\begin{prop}
		Let $\mathbb{G}=(A,\Delta,\epsilon,S)$ be a compact quantum group. Let $\mathbb{E}_\phi$ be defined as above and set $\Delta_{\phi}:=(\mathbb{E}_\phi\otimes\mathbb{E}_\phi)\Delta|_{A_{\phi}}$. Then $\mathbb{E}_\phi$ is an $h$-invariant conditional expectation such that
		\begin{enumerate}
			\item $(\mathbb{E}_\phi\otimes \mathbb{E}_\phi)\Delta(x)=(\mathbb{E}_\phi\otimes \mathbb{E}_\phi)\Delta(\mathbb{E}_\phi(x)),x\in A$;
			\item $\Pol(\mathbb{G})$ is invariant under $\mathbb{E}_\phi$ and $S \mathbb{E}_\phi=\mathbb{E}_\phi S$; 
			\item the restriction of $\epsilon$ to $A_{\phi}$ is a counit, i.e.,
			\[
			(\epsilon\otimes\iota)\Delta_{\phi}=\iota=(\iota\otimes\epsilon)\Delta_{\phi}.
			\]
		\end{enumerate}
		Hence by Theorem \ref{New examples}, $(A_{\phi},\Delta_{\phi})$ is a compact quantum hypergroup.
	\end{prop}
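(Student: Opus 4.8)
The plan is to verify, one hypothesis at a time, that $\mathbb{E}_\phi$ meets all the requirements of Theorem~\ref{New examples}; most of the work is in fact already contained in Lemmas~\ref{property of E^l&E^r} and~\ref{property of E}. First I would check that $\mathbb{E}_\phi$ is an $h$-invariant conditional expectation onto the unital $C^*$-subalgebra $A_\phi$ (the latter being part of Lemma~\ref{property of E}). Since $\phi$ is a state and $\Delta$ a unital $*$-homomorphism, the slice maps $\mathbb{E}_\phi^\ell=(\phi\otimes\iota)\Delta$ and $\mathbb{E}_\phi^r=(\iota\otimes\phi)\Delta$ are unital and completely positive, hence so is $\mathbb{E}_\phi=\mathbb{E}_\phi^\ell\mathbb{E}_\phi^r$, and being unital and positive it has norm one. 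Combined with $\mathbb{E}_\phi^2=\mathbb{E}_\phi$ (Lemma~\ref{property of E}(4)) and $\mathbb{E}_\phi(A)=A_\phi$, Tomiyama's theorem then yields that $\mathbb{E}_\phi$ is a conditional expectation onto $A_\phi$. For $h$-invariance I would apply $h$ to $(\phi\otimes\iota)\Delta(a)$ and invoke $(\iota\otimes h)\Delta(a)=h(a)1$ to get $h\circ\mathbb{E}_\phi^\ell=h$; symmetrically $h\circ\mathbb{E}_\phi^r=h$, whence $h\circ\mathbb{E}_\phi=h$.

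Next I would dispatch conditions (1) and (2). Condition (2) is immediate: the explicit formula $\mathbb{E}_\phi(u^\alpha_{ij})=\sum_{k,l}\phi(u^\alpha_{ik})\phi(u^\alpha_{lj})u^\alpha_{kl}$ obtained in the proof of Lemma~\ref{property of E}(5) shows $\mathbb{E}_\phi(\Pol(\mathbb{G}))\subseteq\Pol(\mathbb{G})$, while $S\mathbb{E}_\phi=\mathbb{E}_\phi S$ on $\Pol(\mathbb{G})$ is precisely Lemma~\ref{property of E}(5). For condition (1), Lemma~\ref{property of E}(3) already reads $(\mathbb{E}_\phi\otimes\mathbb{E}_\phi)\Delta=(\iota\otimes\mathbb{E}_\phi^\ell)\Delta\,\mathbb{E}_\phi$; composing on the right with $\mathbb{E}_\phi$ and using $\mathbb{E}_\phi^2=\mathbb{E}_\phi$ gives $(\mathbb{E}_\phi\otimes\mathbb{E}_\phi)\Delta\,\mathbb{E}_\phi=(\iota\otimes\mathbb{E}_\phi^\ell)\Delta\,\mathbb{E}_\phi=(\mathbb{E}_\phi\otimes\mathbb{E}_\phi)\Delta$, which is exactly the asserted identity $(\mathbb{E}_\phi\otimes\mathbb{E}_\phi)\Delta(x)=(\mathbb{E}_\phi\otimes\mathbb{E}_\phi)\Delta(\mathbb{E}_\phi(x))$ for all $x\in A$.

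For condition (3) I would first record that $\epsilon\circ\mathbb{E}_\phi=\phi$ on $\Pol(\mathbb{G})$: applying $\epsilon$ to $\mathbb{E}_\phi^\ell(a)=(\phi\otimes\iota)\Delta(a)$ and using $(\iota\otimes\epsilon)\Delta=\iota$ gives $\epsilon\circ\mathbb{E}_\phi^\ell=\phi$, and then $\epsilon\circ\mathbb{E}_\phi=\phi\circ\mathbb{E}_\phi^r=(\phi\otimes\phi)\Delta=\phi\star\phi=\phi$, the last step using idempotency of $\phi$. Consequently, for $a$ in the dense $*$-subalgebra $\mathbb{E}_\phi(\Pol(\mathbb{G}))=A_\phi\cap\Pol(\mathbb{G})$ of $A_\phi$ (so that $\mathbb{E}_\phi(a)=a$), one computes
\[
(\epsilon\otimes\iota)\Delta_\phi(a)=(\epsilon\mathbb{E}_\phi\otimes\mathbb{E}_\phi)\Delta(a)=(\phi\otimes\mathbb{E}_\phi)\Delta(a)=\mathbb{E}_\phi\bigl(\mathbb{E}_\phi^\ell(a)\bigr)=\mathbb{E}_\phi(a)=a,
\]
using $\mathbb{E}_\phi\mathbb{E}_\phi^\ell=\mathbb{E}_\phi$, which follows from Lemma~\ref{property of E^l&E^r}(4)--(5); the identity $(\iota\otimes\epsilon)\Delta_\phi=\iota$ is obtained symmetrically. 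With all three conditions in place, Theorem~\ref{New examples} applies and $(A_\phi,\Delta_\phi)$ is a compact quantum hypergroup.

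The one genuinely delicate point, I expect, is the bookkeeping around the counit: $\epsilon$ is in general unbounded for the $C^*$-norm, so the assertion that ``the restriction of $\epsilon$ to $A_\phi$ is a counit'' must be read as a statement on the dense $*$-subalgebra $\mathbb{E}_\phi(\Pol(\mathbb{G}))$, which is exactly where the computation for condition (3) above takes place; the remaining items are a routine assembly of the lemmas already established.
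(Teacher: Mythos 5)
Your proposal is correct and follows exactly the route the paper intends: the paper's own proof is the single sentence ``The proof is now based on straightforward computations using the last two lemmas,'' and your argument is precisely that computation, assembling Lemma~\ref{property of E^l&E^r} and Lemma~\ref{property of E} (plus Tomiyama for the conditional-expectation claim and the identity $\epsilon\circ\mathbb{E}_\phi=\phi$ for the counit condition). Your closing caveat about reading condition (3) on the dense $*$-subalgebra $\mathbb{E}_\phi(\Pol(\mathbb{G}))$ is a sensible refinement that the paper leaves implicit.
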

	
	\begin{proof}
		The proof is now based on straightforward computations using the last two lemmas.
	\end{proof}
	
	Let $\phi$ be an idempotent state on compact quantum group $\mathbb{G}=(A,\Delta)$. A functional $u\in A^*$ is called \textit{$\phi$-bi-invariant} if $u\phi=\phi u=u$.
	
	In the remaining part of this subsection we characterize the $\phi$-bi-invariant functionals, where $\phi$ is an idempotent state.  It turns out that one can transfer each $\phi$-bi-invariant functional on $A$ to its restriction to $A_{\phi}$, preserving the norm and the *-algebra structure. See also \cite{preprintofXumin} for related work.
	
	For linear functionals $\varphi_1,\varphi_2$ on compact quantum hypergroup $(A_{\phi},\Delta_{\phi})$ with the antipode $S$ in $(A,\Delta)$, one can also define the convolution and the involution as we did on compact quantum groups (the notations here are slightly different from the ones in Section 1):
	\[
	\varphi_1\star \varphi_2:=(\varphi_1\otimes \varphi_2)\Delta_{\phi},\ \ \varphi_1^*:=\overline{\varphi_1(S(\cdot)^*)}.
	\]
	Still, we write $\varphi_1\varphi_2$ for short to denote $\varphi_1\star \varphi_2$. Note here that $\varphi_1^*$ is well-defined because $S\mathbb{E}_\phi=\mathbb{E}_\phi S$.
	
	We formulate the results of $\phi$-bi-invariant functionals here without the proof.
	\begin{lem}\label{bi-invariance}
		Let $\phi\in\text{Idem}(\mathbb{G})$ and $u\in A^*$. Then $u$ is $\phi$-bi-invariant if and only if $u=u|_{A_{\phi}}\mathbb{E}_\phi$. In this case, the following hold:
		\begin{enumerate}
			\item $\Vert u\Vert=\Vert u|_{A_{\phi}}\Vert$;
			\item $u$ is positive (resp. a state) if and only if $u|_{A_{\phi}}$ is positive (resp. a state);
			\item $u^*|_{A_{\phi}}=(u|_{A_{\phi}})^*$;
			\item if $u$ and $v$ in $A^*$ are both $\phi$-bi-invariant, then $(uv)|_{A_{\phi}}=u|_{A_{\phi}}v|_{A_{\phi}}$;
			\item $\phi=\epsilon\mathbb{E}_\phi=\epsilon|_{A_{\phi}}\mathbb{E}_\phi$.
		\end{enumerate} 
	\end{lem}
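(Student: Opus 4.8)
The plan is to reduce the whole statement to the single \emph{absorption identity}
\[
(\phi\otimes\mathbb{E}_\phi)\Delta=\mathbb{E}_\phi=(\mathbb{E}_\phi\otimes\phi)\Delta,
\]
which says that $\mathbb{E}_\phi$ swallows a $\phi$ placed on either leg. To prove it I would write $\mathbb{E}_\phi=(\phi\otimes\iota\otimes\phi)\Delta^{(2)}$ and, setting $\Delta^{(3)}:=(\Delta\otimes\iota)\Delta^{(2)}$, expand by coassociativity
\[
(\phi\otimes\mathbb{E}_\phi)\Delta=(\phi\otimes\phi\otimes\iota\otimes\phi)\Delta^{(3)}=\big((\phi\star\phi)\otimes\iota\otimes\phi\big)\Delta^{(2)},
\]
and then use $\phi\star\phi=\phi$ (idempotence) to collapse this back to $(\phi\otimes\iota\otimes\phi)\Delta^{(2)}=\mathbb{E}_\phi$; the second equality is symmetric, absorbing $\phi$ on the last leg.

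Granting this, the equivalence follows quickly. If $u$ is $\phi$-bi-invariant then $u=\phi\star u\star\phi=(\phi\otimes u\otimes\phi)\Delta^{(2)}$, and since $(\phi\otimes v\otimes\phi)\Delta^{(2)}=v\circ\mathbb{E}_\phi$ for every $v\in A^*$, we get $u=u\circ\mathbb{E}_\phi$; as $\mathbb{E}_\phi(A)=A_\phi$, this is exactly $u=u|_{A_\phi}\mathbb{E}_\phi$. Conversely, if $u=u|_{A_\phi}\mathbb{E}_\phi=u\circ\mathbb{E}_\phi$, then $\phi\star u=(\phi\otimes u\circ\mathbb{E}_\phi)\Delta=u\circ\big[(\phi\otimes\mathbb{E}_\phi)\Delta\big]=u\circ\mathbb{E}_\phi=u$ by the absorption identity, and likewise $u\star\phi=u$, so $u$ is $\phi$-bi-invariant.

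For the five properties I would use that, by the preceding proposition, $\mathbb{E}_\phi\colon A\to A_\phi$ is a unital, completely positive, norm-one conditional expectation; write $\tilde u:=u|_{A_\phi}$, so $u=\tilde u\,\mathbb{E}_\phi$. Then (1) is immediate from $\|u\|\le\|\tilde u\|\,\|\mathbb{E}_\phi\|=\|\tilde u\|$ and the trivial bound $\|\tilde u\|\le\|u\|$. For (2), one direction is just restriction to the $C^*$-subalgebra $A_\phi$; for the other, positivity of $\mathbb{E}_\phi$ gives $u(a^*a)=\tilde u(\mathbb{E}_\phi(a^*a))\ge0$, and normalization passes through $u(1)=\tilde u(1)$. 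For (3), $S\mathbb{E}_\phi=\mathbb{E}_\phi S$ (Lemma~\ref{property of E}(5)) together with $*$-invariance of $A_\phi$ forces $S(a)^*\in A_\phi$ for $a$ in the dense $*$-subalgebra $A_\phi\cap\Pol(\mathbb{G})$, whence $u^*(a)=\overline{u(S(a)^*)}=\overline{\tilde u(S(a)^*)}=(\tilde u)^*(a)$, and one extends by continuity. For (4), $u\star v$ is again $\phi$-bi-invariant since $\phi\star u\star v\star\phi=u\star v$; and since $(\mathbb{E}_\phi\otimes\mathbb{E}_\phi)\Delta(a)=\Delta_\phi(a)$ for $a\in A_\phi$, one gets $(u\star v)(a)=(u\circ\mathbb{E}_\phi\otimes v\circ\mathbb{E}_\phi)\Delta(a)=(\tilde u\otimes\tilde v)\Delta_\phi(a)$, i.e.\ $(u\star v)|_{A_\phi}=\tilde u\star\tilde v$. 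Finally (5) is the computation $\epsilon\,\mathbb{E}_\phi=(\phi\otimes\epsilon\otimes\phi)\Delta^{(2)}=\phi\star\epsilon\star\phi=\phi\star\phi=\phi$, using the counit property, and $\epsilon\,\mathbb{E}_\phi=\epsilon|_{A_\phi}\mathbb{E}_\phi$ because $\mathbb{E}_\phi$ is valued in $A_\phi$.

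I expect the main obstacle to be purely the absorption identity and its index bookkeeping through $\Delta^{(2)}$ and $\Delta^{(3)}$: once $(\phi\otimes\mathbb{E}_\phi)\Delta=\mathbb{E}_\phi=(\mathbb{E}_\phi\otimes\phi)\Delta$ is in hand, the equivalence and all of (1)--(5) are formal consequences of it and of $\mathbb{E}_\phi$ being a conditional expectation. The only other point needing care is the well-definedness of the involution on $A_\phi^*$ entering (3), which is handled precisely by $S\mathbb{E}_\phi=\mathbb{E}_\phi S$.
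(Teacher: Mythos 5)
Your proof is correct, and in fact the paper states this lemma without any proof at all (``We formulate the results of $\phi$-bi-invariant functionals here without the proof'', deferring to related work), so your argument supplies the missing details. The absorption identity $(\phi\otimes\mathbb{E}_\phi)\Delta=\mathbb{E}_\phi=(\mathbb{E}_\phi\otimes\phi)\Delta$, which follows from coassociativity and $\phi\star\phi=\phi$ exactly as you compute, is the right key step: it gives the equivalence $u=u|_{A_\phi}\mathbb{E}_\phi$ in both directions, and (1)--(5) then follow formally from $\mathbb{E}_\phi$ being a unital positive norm-one projection onto $A_\phi$ together with $S\mathbb{E}_\phi=\mathbb{E}_\phi S$ for the involution in (3). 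This is the standard route and is consistent with the surrounding lemmas the paper does prove.
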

	
	\begin{rem}
		The last property implies that the idempotent state $\phi$ can be recovered by the counit $\epsilon$ through this formula. This will be frequently used in the sequel.
	\end{rem}
	
	\subsection{Examples from group-like projections}
	
	Group-like projections in algebraic quantum groups were first introduced by Van Daele and Landstad in \cite{grouplikeprojections}. The relation between idempotent states and group-like projections has been studied by Franz and  Skalski in \cite{Idempotentstatesoncqg} on compact quantum groups, and then by Faal and Kasprzak in \cite{Group-likeprojectionsforLCQG}, and Kasparzak and So\l tan in \cite{LatticeofidempotentstatesonLCQG} on locally compact quantum groups. 
	
	The main result in this subsection is that a \emph{group-like projection} in a compact quantum group induces a compact quantum hypergroup. The main ingredients were obtained on the algebraic quantum group level by Delvaux and Van Daele \cite{constructionsandexamplesofquantumhypergroups}.
	
	\begin{defn}
		Let $\mathbb{G}=(A,\Delta)$ be a compact quantum group. A non-zero element $p\in A$ is called a \emph{group-like projection} if $p=p^*=p^2$ and 
		\begin{equation}\label{group-like projection}
		\Delta(p)(1\otimes p)=p\otimes p=\Delta(p)(p\otimes 1),~~S(p)=p.   
		\end{equation}
		Note that by taking adjoints we have
		\begin{equation}\label{group-like projection-bis}
		(1\otimes p)\Delta(p)=p\otimes p=(p\otimes 1)\Delta(p).   
		\end{equation}
	\end{defn}
	
	The following proposition is not a direct consequence of Theorem \ref{New examples}, since the projection $P$ is not $h$-invariant in general. But one can check the proof \cite[Theorem 2.1]{Newexamples} to see that this is only used to deduce the strong invariance \eqref{strong invariance} of $h$, which is nothing but (1) of the Proposition \ref{examples from group-like projections}.
	
	\begin{prop}\label{examples from group-like projections}
		Let $(A,\Delta)$ be a compact quantum group. Let $p$ be a group-like projection in $A$, then $P:A\to A_p:=pAp,a\mapsto pap$ is an $h$-preserving conditional expectation such that
		\begin{enumerate}
			\item $(\iota\otimes h)((S\otimes\iota)\Delta_p(pap)(1\otimes pbp))=(\iota\otimes h)((1\otimes pap)\Delta_p(pbp)),~~a,b\in A$;
			\item $(P\otimes P)\Delta=(P\otimes P)\Delta P$;
			\item $S P=P S$; 
			\item the restriction of $\epsilon$ to $A_p$ is a counit, i.e.,
			\[
			(\epsilon\otimes\iota)\Delta_p=\iota=(\iota\otimes\epsilon)\Delta_p,
			\]
		\end{enumerate}
		where $\Delta_p:=(P\otimes P)\Delta|_{A_p}$. Then $(A_p,\Delta_p)$ is a compact quantum hypergroup.
	\end{prop}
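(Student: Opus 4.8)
The plan is to invoke Theorem~\ref{New examples} in the weakened form indicated in the paragraph preceding the statement, where the $h$-invariance of $P$ is replaced by the strong invariance condition~(1). So I would first record the needed facts about $p$, check that $P$ is a conditional expectation with the listed properties, and then verify~(1) directly; the conclusion then follows from Kalyuzhnyi's argument. As preliminary observations: applying $\epsilon\otimes\iota$ to $\Delta(p)(1\otimes p)=p\otimes p$ and using the counit axiom gives $p=\epsilon(p)p$, hence $\epsilon(p)=1$; moreover $h(p)>0$ by faithfulness of $h$ on $\Pol(\mathbb{G})$; and $p\in\Pol(\mathbb{G})$ (standard for group-like projections — in the group case this is the remark that open subgroups of a compact group have finite index — and exactly what makes $\epsilon|_{A_p}$ and $S|_{A_p}$ available). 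That $P(a)=pap$ is a conditional expectation onto the $C^*$-algebra $A_p=pAp$ (whose identity is $p$) is routine: it is completely positive as a compression, idempotent because $p^2=p$, and $A_p$-bimodular because $xp=px=x$ for $x\in A_p$. Since $S(p)=p$ and $S$ is an antihomomorphism on $\Pol(\mathbb{G})$, $P$ preserves $\Pol(\mathbb{G})$ and $SP=PS$ there, which is~(3). The ``$h$-preserving'' assertion I read as the statement that $\frac1{h(p)}h|_{A_p}$ is the (necessarily bi-invariant) Haar measure of the resulting hypergroup, and would verify it alongside~(1).

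\textbf{The key identity and conditions (2), (4).} From $\Delta(pap)=\Delta(p)\Delta(a)\Delta(p)$ together with the group-like relations \eqref{group-like projection}--\eqref{group-like projection-bis}, which give $(p\otimes p)\Delta(p)=\Delta(p)(p\otimes p)=p\otimes p$, one obtains
\[
\Delta_p(pap)=(p\otimes p)\Delta(a)(p\otimes p)=(1\otimes p)\Delta(pap)(1\otimes p)=(p\otimes 1)\Delta(pap)(p\otimes 1),\qquad a\in A.
\]
From the first expression, $(P\otimes P)\Delta(a)=(p\otimes p)\Delta(a)(p\otimes p)=(P\otimes P)\Delta(P(a))$, i.e.\ condition~(2); and applying $\epsilon\otimes\iota$ (resp.\ $\iota\otimes\epsilon$), using $\epsilon(p)=1$ and the counit axiom on $\mathbb{G}$, yields $(\epsilon\otimes\iota)\Delta_p=\iota=(\iota\otimes\epsilon)\Delta_p$ on $A_p$, i.e.\ condition~(4). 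Both are purely formal manipulations with the group-like identities.

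\textbf{Condition (1), the main obstacle.} This is the strong invariance of the Haar measure on $(A_p,\Delta_p)$, and precisely the point at which Kalyuzhnyi's proof used the $h$-invariance of $P$; since that fails here it must be proved directly. By density and continuity of $P$ it suffices to treat $a,b\in\Pol(\mathbb{G})$. I would start from the strong invariance of the Haar state of $\mathbb{G}$, in Sweedler form $\sum S(u_{(1)})\,h(u_{(2)}v)=\sum v_{(1)}\,h(u\,v_{(2)})$ for $u,v\in\Pol(\mathbb{G})$, expand both sides of~(1) using the identities above for $\Delta_p$ (together with $S(p)=p$ and $x=xp=px$ for $x\in A_p$), and then absorb the auxiliary copies of $p$: each superfluous $p$ either collapses against a neighbouring one via $(p\otimes p)\Delta(p)=\Delta(p)(p\otimes p)=p\otimes p$, or is moved past the Haar state using that $p$ is fixed by the modular automorphism group of $h$ (automatic when $\mathbb{G}$ is finite, since then $h$ is tracial). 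This bookkeeping — tracking the positions of the projections inside the Sweedler sums — is the genuinely delicate step; it is essentially the content obtained on the *-algebraic level by Delvaux and Van Daele, and it simultaneously exhibits $\frac1{h(p)}h|_{A_p}$ as the Haar measure. With (1)--(4) in hand, Kalyuzhnyi's argument for Theorem~\ref{New examples} goes through unchanged (its derivation of the strong invariance of the Haar measure from the $h$-invariance of $P$ being replaced by our direct check of~(1)), so $(A_p,\Delta_p)$ is a compact quantum hypergroup with counit $\epsilon|_{A_p}$, antipode $S|_{A_p}$, and Haar measure $\frac1{h(p)}h|_{A_p}$.
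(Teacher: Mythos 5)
Your overall strategy coincides with the paper's: the paper likewise observes that Theorem \ref{New examples} uses the $h$-invariance of $P$ only to derive strong invariance of the Haar measure, replaces that hypothesis by a direct verification of condition (1), and proves (2)--(4) by essentially the manipulations you describe (including $\epsilon(p)=1$ and $SP=PS$ via $S(p)=p$). The one place where your proposal falls short of a proof is the verification of (1) itself, which you leave as ``delicate bookkeeping'' and, more importantly, propose to close by moving the auxiliary copies of $p$ past the Haar state ``using that $p$ is fixed by the modular automorphism group of $h$.'' You justify that claim only when $\mathbb{G}$ is finite, whereas the proposition is stated for an arbitrary compact quantum group; as written this is a genuine gap, since modular invariance of $p$ is not part of the definition \eqref{group-like projection} and would require its own argument.

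The paper's computation shows that no commutation past $h$ is needed at all. The key observation is that
\[
(p\otimes p)\Delta(pxp)=(p\otimes 1)\Delta(pxp)
\quad\text{and}\quad
\Delta(pxp)(p\otimes p)=\Delta(pxp)(p\otimes 1),
\]
because $(p\otimes p)\Delta(p)=(p\otimes 1)\Delta(p)=p\otimes p$ by \eqref{group-like projection} and \eqref{group-like projection-bis}, and symmetrically on the right. Hence every auxiliary projection in $\Delta_p(pap)=(p\otimes p)\Delta(pap)(p\otimes p)$ can be parked on the \emph{first} tensor leg, where $\iota\otimes h$ does not see it: one pulls $(p\otimes 1)(\cdot)(p\otimes 1)$ through $S\otimes\iota$ using $S(p)=p$ and through $\iota\otimes h$ trivially, applies the strong left invariance of $h$ on $\mathbb{G}$, namely $(\iota\otimes h)\bigl((S\otimes\iota)(\Delta(u)(1\otimes v))\bigr)=(\iota\otimes h)\bigl((1\otimes u)\Delta(v)\bigr)$ with $u=pap$, $v=pbp$, and then reinserts the projections by the same identity read backwards. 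This closes (1) in complete generality, with no Sweedler expansion and no modular theory; replacing your sketch by this short chain of equalities makes your argument match the paper's proof.
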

	
	\begin{proof}
		\begin{enumerate}
			\item For any $a,b\in A$, we have by \eqref{group-like projection}
			\begin{align*}
			(\iota\otimes h)((S\otimes\iota)\Delta_p(pap)(1\otimes pbp))
			&=(\iota\otimes h)(S\otimes\iota)(\Delta_p(pap)(1\otimes pbp))\\
			&=(\iota\otimes h)(S\otimes\iota)((p\otimes p)\Delta(pap)(1\otimes pbp)(p\otimes p))\\
			&=(\iota\otimes h)(S\otimes\iota)((p\otimes 1)\Delta(pap)(1\otimes pbp)(p\otimes 1))\\
			&=p(\iota\otimes h)(S\otimes\iota)(\Delta(pap)(1\otimes pbp))p\\
			&=p(\iota\otimes h)((1\otimes pap)\Delta(pbp))p\\
			&=(\iota\otimes h)((p\otimes 1)(1\otimes pap)\Delta(pbp)(p\otimes 1))\\
			&=(\iota\otimes h)((1\otimes pap)(p\otimes p)\Delta(pbp)(p\otimes p))\\
			&=(\iota\otimes h)((1\otimes pap)\Delta_p(pbp)).
			\end{align*}
			
			\item For any $a\in A$, we have by \eqref{group-like projection} and \eqref{group-like projection-bis} 
			\begin{align*}
			(P\otimes P)\Delta(a)
			&=(p\otimes p)\Delta(a)(p\otimes p)\\
			&=(p\otimes p)(p\otimes p)\Delta(a)(p\otimes p)(p\otimes p)\\
			&=(p\otimes p)(1\otimes p)\Delta(p)\Delta(a)\Delta(p)(p\otimes 1)(p\otimes p)\\
			&=(p\otimes p)\Delta(pap)(p\otimes p)\\
			&=(P\otimes P)\Delta P(a).
			\end{align*}
			
			\item For any $a\in A$, it follows from the definition of group-like projection that
			\[
			S P(a)=S(pap)=S(p)S(a)S(p)=pS(a)p=P S(a).
			\]
			
			\item By definitions of the counit $\epsilon$ on $A$ and the group-like projection $p$ we obtain
			\begin{align*}
			(\epsilon\otimes\iota)\Delta_p(pap)
			&=(\epsilon\otimes\iota)((p\otimes p)\Delta(pap)(p\otimes p))\\
			&=(\epsilon\otimes\iota)((p\otimes p)\Delta(p)\Delta(a)\Delta(p)(p\otimes p))\\
			&=(\epsilon\otimes\iota)((p\otimes p)\Delta(a)(p\otimes p))\\ 
			&=\epsilon(p)^2 p(\epsilon\otimes\iota)\Delta(a)p\\
			&=pap,
			\end{align*}
			where we have used un easy fact $\epsilon(p)=1$ in the last equality. So $(\epsilon\otimes\iota)\Delta_p=\iota$ and the proof of  the other equality is similar.
		\end{enumerate}
	\end{proof}
	
	\subsection{A duality theorem}\label{Duality induced by an idempotent state}
	Let $(A,\Delta)$ be a finite quantum group. Let $\phi$ be an idempotent state on $A$. Denote by $(A_{\phi},\Delta_{\phi})$ the finite quantum hypergroup induced by $\phi$. Considered as an element in $\hat{A}$, $p=\phi$ is a group-like projection \cite{Idempotentstatesoncqg}. We then let $(\hat{A}_{p},\hat{\Delta}_{p})$ be the finite quantum hypergroup associated to the group-like projection $p=\phi$ in $\hat{A}$. We show that $(\hat{A}_{p},\hat{\Delta}_{p})$ is the dual of $(A_{\phi},\Delta_{\phi})$.
	
	\begin{thm}
		Let $\phi$ be an idempotent state on a finite quantum group $(A,\Delta)$. Let $(A_{\phi},\Delta_{\phi})$ and $(\hat{A}_{p},\hat{\Delta}_{p})$ be as above. Then $(\hat{A}_{p},\hat{\Delta}_{p})$ is isomorphic to the dual of $(A_{\phi},\Delta_{\phi})$.
	\end{thm}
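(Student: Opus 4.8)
The plan is to produce an explicit isomorphism by working in the Fourier picture, where $\hat{A}$ is identified with $A^*$ (convolution $\star$ as product, the dual of the multiplication of $A$ as coproduct $\hat\Delta$) and the group-like projection $p$ is precisely the functional $\phi$. First I would identify the underlying algebra: since $p^2=p$, the corner $\hat A_p=p\hat Ap=\phi\star A^*\star\phi$ is exactly the set $\{u\in A^*:\phi\star u=u\star\phi=u\}$ of $\phi$-bi-invariant functionals, and the multiplication and $*$-operation it inherits from $\hat A$ are just the restrictions of $\star$ and ${}^*$ on $A^*$. The key computational input is the identity
\[
\phi\star u\star\phi \;=\; u\circ\mathbb{E}_\phi,\qquad u\in A^*,
\]
which is immediate from $\mathbb{E}_\phi=(\phi\otimes\iota\otimes\phi)\Delta^{(2)}$ and coassociativity. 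In particular every $x\in\hat A_p$ satisfies $x=x\circ\mathbb{E}_\phi$; since $\mathbb{E}_\phi$ restricts to the identity on $A_\phi$, Lemma \ref{bi-invariance} shows that $\Theta\colon\hat A_p\to\widehat{A_\phi}$, $\Theta(x):=x|_{A_\phi}$, is a bijection onto the underlying algebra $(A_\phi)^*$ of the dual, with inverse $\omega\mapsto\omega\circ\mathbb{E}_\phi$ (landing in $\hat A_p$ because $\mathbb{E}_\phi^2=\mathbb{E}_\phi$).

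Next I would check that $\Theta$ is a $*$-algebra isomorphism. The product and involution on $\widehat{A_\phi}$ are the convolution with respect to $\Delta_\phi$ and the involution $\omega\mapsto\overline{\omega(S(\cdot)^*)}$, the latter being well defined since $S\mathbb{E}_\phi=\mathbb{E}_\phi S$ by Lemma \ref{property of E}(5); and parts (3)--(4) of Lemma \ref{bi-invariance} say precisely that restriction to $A_\phi$ carries the convolution (w.r.t.\ $\Delta$) of two $\phi$-bi-invariant functionals to the convolution (w.r.t.\ $\Delta_\phi$) of their restrictions and intertwines the involutions. This yields the $*$-algebra isomorphism at once.

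The heart of the argument is to verify that $\Theta$ intertwines the comultiplications, i.e.\ $(\Theta\otimes\Theta)\hat\Delta_p=\widehat{\Delta_\phi}\circ\Theta$. Here, by the general construction of the dual, $\widehat{\Delta_\phi}(\omega)(a\otimes b)=\omega(ab)$ for $a,b\in A_\phi$; and by Proposition \ref{examples from group-like projections} together with the fact that $\hat\Delta$ is dual to the multiplication of $A$, one has $\hat\Delta_p(x)=(P\otimes P)\hat\Delta(x)$ with $P(z)=pzp=z\circ\mathbb{E}_\phi$. Writing $\hat\Delta(x)=\sum_i z_i\otimes w_i$ in $A^*\otimes A^*$ and pairing against $a\otimes b\in A_\phi\otimes A_\phi$, the left-hand side evaluates to $\sum_i z_i(\mathbb{E}_\phi(a))\,w_i(\mathbb{E}_\phi(b))=\sum_i z_i(a)w_i(b)=\hat\Delta(x)(a\otimes b)=x(ab)$, which is exactly what the right-hand side gives, using $\mathbb{E}_\phi(a)=a$, $\mathbb{E}_\phi(b)=b$ and that the product $ab$ computed in $A_\phi$ coincides with the one in $A$. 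Since everything is finite dimensional, all multiplier algebras are trivial, so $\hat\Delta_p$ and $\widehat{\Delta_\phi}$ genuinely land in the algebraic tensor squares and pairing against all $a\otimes b$ determines the elements; hence the two maps agree.

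Finally, an isomorphism of finite quantum hypergroups means a $*$-isomorphism of the underlying $*$-algebras intertwining the comultiplications — the counit, the integral and the antipode being then automatically matched by their uniqueness — so $\Theta$ witnesses that $(\hat A_p,\hat\Delta_p)$ is the dual of $(A_\phi,\Delta_\phi)$. I expect the only real obstacle to be bookkeeping: keeping straight which convolution and which involution act on each of the four algebras $A$, $A_\phi$, $\hat A$, $\hat A_p$, and matching the group-like projection/coproduct of $\hat{\mathbb{G}}$ with the data on $A$. Once the identity $\phi\star u\star\phi=u\circ\mathbb{E}_\phi$ and Lemma \ref{bi-invariance} are in hand, the remaining content is the one-line pairing computation above.
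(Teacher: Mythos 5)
Your proof is correct and follows essentially the same route as the paper: the paper defines the isomorphism in the opposite direction, $\pi(\varphi)=\varphi\mathbb{E}_\phi$ from $\widehat{A_\phi}$ to $\hat A_p$, but this is exactly the inverse of your restriction map $\Theta$, and both arguments rest on Lemma \ref{bi-invariance} for the $*$-algebra structure and on the same pairing computation (via $\phi\star u\star\phi=u\circ\mathbb{E}_\phi$ and $\mathbb{E}_\phi|_{A_\phi}=\iota$) to intertwine the comultiplications. No gaps.
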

	
	\begin{proof}
		Indeed, the key ingredients of the proof have already been included in Lemma \ref{bi-invariance}. Let $\pi(\varphi):=\varphi\mathbb{E}_{\phi}$ for each $\varphi\in\widehat{A_{\phi}}=(A_{\phi})^*$. Then by Lemma \ref{bi-invariance}, $\pi$ is a bijection between $\widehat{A_{\phi}}$ and $\hat{A}_{p}$. For any $\varphi_1,\varphi_2\in\widehat{A_{\phi}}=(A_{\phi})^*$, we have by Lemma \ref{bi-invariance} that 
		\[
		\pi(\varphi_1\varphi_2)
		=(\varphi_1\otimes\varphi_2)\Delta_{\phi}\mathbb{E}_{\phi}
		=(\varphi_1\mathbb{E}_{\phi}\otimes\varphi_2\mathbb{E}_{\phi})\Delta_{\phi}
		=(\pi(\varphi_1)\otimes\pi(\varphi_2))\Delta_{\phi}
		=\pi(\varphi_1)\pi(\varphi_2).
		\]
		Let $\varphi\in (A_{\phi})^*$, then by Lemma \ref{property of E}
		\[
		\pi(\varphi^*)(a)
		=\overline{\varphi(S(\mathbb{E}_{\phi}(a))^*)}
		=\overline{\varphi(\mathbb{E}_{\phi}(S(a))^*)}
		=\overline{\varphi\mathbb{E}_{\phi}(S(a)^*)}
		=\pi(\varphi)^*(a),
		\]
		for all $a\in A$. So $\pi$ is a *-isomorphism. It remains to show that $(\pi\otimes\pi)\widehat{\Delta_{\phi}}=\hat{\Delta}_{p}\pi$. To see this, let $\varphi\in(A_{\phi})^*$ and let $a,b\in A$. Then from Lemma \ref{property of E} and Lemma \ref{bi-invariance} it follows
		\[
		\hat{\Delta}_{p}\pi(\varphi)(a\otimes b)
		=\hat{\Delta}_{p}(\varphi\mathbb{E}_{\phi})(a\otimes b)
		=(\varphi\mathbb{E}_{\phi})(ab)
		=(\varphi\mathbb{E}_{\phi})(\mathbb{E}_{\phi}(a)\mathbb{E}_{\phi}(b))
		=\varphi(\mathbb{E}_{\phi}(a)\mathbb{E}_{\phi}(b)),
		\]
		which is nothing but $(\pi\otimes\pi)\widehat{\Delta_{\phi}}(a\otimes b)$. This finishes the proof.
	\end{proof}
	
	\section{Poisson states on compact quantum groups} 
	
	Let $\mathbb{G}=(A,\Delta)$ be a compact quantum group. For each $\phi\in\text{Idem}(\mathbb{G})$, we say that $\{\omega_t\}_{t\geq0}$ is a \textit{convolution semigroup of functionals} on $A$ starting from $\phi$ if 
	\begin{enumerate}
		\item $\omega_t\in A^*$ for each $t\geq 0$.
		\item $\omega_{s+t}=\omega_{s}\omega_{t}$ for all $s,t\geq0$.
		\item $\omega_0=\phi$.
	\end{enumerate}
	If moreover, each $\omega_t\in\mathcal{S}(A)$, we call $\{\omega_t\}_{t\geq0}$ a \textit{convolution semigroup of states} starting from $\phi$. We say that the convolution semigroup of states $\{\omega_t\}_{t\geq0}$ is \textit{norm continuous} if 
	\[
	\lim\limits_{t\to 0^+}\Vert\omega_t-\phi\Vert=0.
	\]
	
	For a $\phi$-bi-invariant functional $u\in A^*$  define
	\[
	\exp_\phi(u):=\phi+\sum_{n\geq 1}\frac{u^n}{n!}.
	\]  
	Then it is easy to check that $\{\exp_\phi(tu)\}_{t\geq0}$ form a norm continuous convolution semigroup of functionals. We aim to find sufficient and necessary conditions on $u$ such that $\{\exp_\phi(tu)\}_{t\geq0}$ is a convolution semigroup of states. For this we make some notations. A functional $u\in A^*$ is called $\textit{Hermitian}$ if $u\left(x^*\right)=\overline{u\left(x\right)}$ for all $x$; it is further called  \textit{conditionally positive definite with respect to $\phi$} if $u\left(x^*x\right)\geq0$ for all $x$ such that $\phi\left(x^*x\right)=0$.
	The main theorem in this section is as follows.
	
	\begin{thm}\label{Theorem 1}
		Suppose that $\mathbb{G}=\left(A,\Delta\right)$ is a compact quantum group. Let $\phi\in\text{Idem}\left(\mathbb{G}\right)$. Then for $u\in A^*$, the following statements are equivalent.
		\begin{enumerate}
			\item $u(1)=0$, and $u$ is conditionally positive definite with respect to $\phi$.
			\item $u=r(v-\phi)$, where $r\geq0$ and $v$ is a $\phi$-bi-invariant state.
		\end{enumerate}
	\end{thm}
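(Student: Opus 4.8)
The plan is to establish $(2)\Rightarrow(1)$ directly and $(1)\Rightarrow(2)$ by producing a scalar $r\ge 0$ for which $u+r\phi$ is a \emph{positive} functional; then $v:=r^{-1}(u+r\phi)$ (or $v:=\phi$ when $r=0$) will be the desired $\phi$-bi-invariant state. For $(2)\Rightarrow(1)$: if $u=r(v-\phi)$ with $r\ge 0$ and $v$ a $\phi$-bi-invariant state, then $u(1)=r(1-1)=0$, $u$ is Hermitian because $v$ and $\phi$ are states, and $\phi(x^*x)=0$ forces $u(x^*x)=r\,v(x^*x)\ge 0$; one also records that $u$ is $\phi$-bi-invariant, since $v\phi=\phi v=v$ and $\phi\phi=\phi$ give $(v-\phi)\phi=\phi(v-\phi)=v-\phi$.

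For $(1)\Rightarrow(2)$ the idea is to transport the problem to the compact quantum hypergroup $A_\phi$. Using that $u$ is $\phi$-bi-invariant (the standing hypothesis under which $\exp_\phi(tu)$ was introduced), Lemma~\ref{bi-invariance} lets us write $u=\tilde u\,\mathbb{E}_\phi$ with $\tilde u:=u|_{A_\phi}\in(A_\phi)^*$, and on $A_\phi$ the functionals $\phi$ and $\epsilon$ agree (Lemma~\ref{bi-invariance}(5)). Since $A_\phi$ is a $C^*$-subalgebra, $\epsilon|_{A_\phi}$ is a (bounded) character on it, so for $y\in A_\phi$ one has $\phi(y^*y)=\epsilon(y^*y)=|\epsilon(y)|^2$. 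Hence $(1)$ says precisely that $\tilde u$ is Hermitian, $\tilde u(1)=0$, and $\tilde u(y^*y)\ge 0$ for every $y$ in the closed two-sided ideal $J:=\ker(\epsilon|_{A_\phi})$. Given such an $r$ with $\tilde u+r\,\epsilon|_{A_\phi}\ge 0$ on $A_\phi$: the functional $u+r\phi$ is $\phi$-bi-invariant with restriction $\tilde u+r\,\epsilon|_{A_\phi}$ to $A_\phi$, so Lemma~\ref{bi-invariance}(2) gives $u+r\phi\ge 0$ on $A$; then for $r>0$ the functional $v:=r^{-1}u+\phi$ is positive, satisfies $v(1)=1$ and $v\phi=\phi v=v$, and $u=r(v-\phi)$, while $r=0$ forces $\tilde u=0$, hence $u=0$.

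It remains to build $r$. For $x\in A_\phi$ write $x=y+\epsilon(x)1$ with $y:=x-\epsilon(x)1\in J$; using $\tilde u(1)=0$ and Hermiticity,
\[
\big(\tilde u+r\,\epsilon|_{A_\phi}\big)(x^*x)=\tilde u(y^*y)+2\,\mathrm{Re}\!\big(\overline{\epsilon(x)}\,\tilde u(y)\big)+r\,|\epsilon(x)|^2 ,
\]
and completing the square in $\epsilon(x)$ shows this is $\ge 0$ for all $x$ provided
\[
|\tilde u(y)|^2\le r\,\tilde u(y^*y)\qquad(y\in J).
\]
This estimate is the crux. The sesquilinear form $(a,b)\mapsto\tilde u(b^*a)$ is positive semidefinite on $J$ — Hermiticity gives its symmetry, conditional positivity gives $\tilde u(a^*a)\ge 0$ — so the Cauchy--Schwarz inequality is available on $J$. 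Taking a self-adjoint approximate unit $(e_\lambda)$ of the $C^*$-algebra $J$ and noting $0\le\tilde u(e_\lambda^2)\le\|\tilde u\|$, we obtain $|\tilde u(e_\lambda y)|^2\le\tilde u(y^*y)\,\tilde u(e_\lambda^2)\le\|\tilde u\|\,\tilde u(y^*y)$, and letting $\lambda\to\infty$ (so that $e_\lambda y\to y$ and $\tilde u(e_\lambda y)\to\tilde u(y)$ by boundedness of $\tilde u$) yields the estimate with $r=\|\tilde u\|$.

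The step I expect to be the real obstacle is exactly this last estimate: one must upgrade the bare positivity of $\tilde u$ on squares in $J$ to a quantitative domination of $|\tilde u(y)|$ by $\tilde u(y^*y)$, for which the trivial bound $|\tilde u(y)|\le\|\tilde u\|\,\|y\|$ is of no use. It is also essential to run the argument inside $A_\phi$ — where $\phi$ restricts to a character and $J$ is a genuine closed two-sided ideal with an approximate unit — rather than inside $A$, where $\phi$ is merely a state and the analogous kernel is only a left ideal. Finally, if Hermiticity of $u$ is not already built into the definition of conditional positive definiteness, it can be recovered beforehand by the same approximate-unit device applied to self-adjoint elements of $J$.
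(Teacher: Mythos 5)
Your proof is correct and follows the same skeleton as the paper's: the trivial direction is checked directly, and the substantive direction is handled by transporting everything to $A_\phi$ via Lemma \ref{bi-invariance}, using $\phi=\epsilon|_{A_\phi}\mathbb{E}_\phi$ so that the condition $\phi(x^*x)=0$ becomes membership of $\mathbb{E}_\phi(x)$ in the ideal $J=\ker(\epsilon|_{A_\phi})$. (You also read the statement correctly: $u$ is tacitly $\phi$-bi-invariant, and the paper's proof has its two implication labels accidentally swapped.) The only divergence is in the character-case step, which the paper isolates as Proposition \ref{counit case} and dispatches by quoting the norm-preserving positive extension theorem for a positive functional on a closed ideal: $u_0:=u|_{J}$ extends to a positive $\widetilde{u_0}$ on the unital algebra with $\Vert\widetilde{u_0}\Vert=\Vert u_0\Vert$, and $u(x)=\widetilde{u_0}(x-\epsilon(x)1)$ yields the decomposition with $r=\Vert u_0\Vert$. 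You instead prove the required inequality $|\tilde u(y)|^2\le\Vert\tilde u\Vert\,\tilde u(y^*y)$ by hand (Cauchy--Schwarz for the form $(a,b)\mapsto\tilde u(b^*a)$ on $J$ together with a self-adjoint approximate unit) and then complete the square; this is exactly the standard proof of the extension theorem you are replacing, so the two arguments are the same mathematics --- yours self-contained and with an explicit constant $r=\Vert u|_{A_\phi}\Vert$, the paper's shorter. The supporting details you rely on (that $J$ is a closed two-sided ideal of $A_\phi$ with an approximate unit, that Hermiticity of $\tilde u|_J$ is automatic from positivity on squares, and that positivity lifts back from $A_\phi$ to $A$ by Lemma \ref{bi-invariance}) are all sound.
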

	
	The following proposition proves Theorem \ref{Theorem 1} on general unital $C^*$-algebras, under the additional assumptiion that  $\phi=\varepsilon$ is a character (the quantum group structure can be then removed, since any $u\in A^*$ is $\epsilon$-bi-invariant with $\epsilon$ the counit).
	
	\begin{prop}\label{counit case}
		Let $A$ be a unital $C^*$-algebra with $\varepsilon$ a character. Then for any non-zero bounded linear functional $u$ on $A$ such that $u(1)=0$ and $u(x^*x)\geq0$ for all $\varepsilon(x^*x)=0$, we have $u=r(v-\varepsilon)$, where $r>0$ and $v$ is a state.
	\end{prop}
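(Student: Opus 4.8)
The plan is to manufacture the state $v$ by adding to $u$ a carefully chosen positive multiple of $\varepsilon$, the multiple being exactly the norm of $u$ restricted to $\ker\varepsilon$. Since $\varepsilon$ is a character, $N:=\ker\varepsilon$ is a closed two-sided $*$-ideal of $A$ of codimension one, and every $a\in A$ decomposes uniquely as $a=\varepsilon(a)1+a_0$ with $a_0:=a-\varepsilon(a)1\in N$. First I would observe that $u_0:=u|_N$ is a \emph{positive} linear functional on the $C^*$-algebra $N$: for $b\in N$ with $b\ge0$ one has $b^{1/2}\in N$ by functional calculus, so $\varepsilon\big((b^{1/2})^*b^{1/2}\big)=\varepsilon(b)=0$, and the hypothesis gives $u(b)=u\big((b^{1/2})^*b^{1/2}\big)\ge0$. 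Moreover $u\neq0$ forces $u_0\neq0$, because $u(a)=\varepsilon(a)u(1)+u(a_0)=u(a_0)$ shows that $u$ is determined by $u_0$ and $u(1)=0$. Hence $M:=\|u_0\|>0$.

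Next I set $r:=M$ and $v:=\tfrac1r u+\varepsilon=\tfrac1r(u+r\varepsilon)$, so that $u=r(v-\varepsilon)$ holds by construction and $v(1)=\tfrac1r u(1)+1=1$. Everything then reduces to showing that $w:=u+M\varepsilon$ is a positive functional on $A$; once this is established, $v=\tfrac1M w\ge0$ together with $v(1)=1$ forces $v$ to be a state, and the proof is complete. To check positivity of $w$, take $a\in A$ and write $a=\mu1+a_0$ with $\mu=\varepsilon(a)$ and $a_0\in N$; then
\[
a^*a=|\mu|^2 1+\bar\mu\,a_0+\mu\,a_0^*+a_0^*a_0 .
\]
Applying $w$, using $u(1)=0$, that $a_0,a_0^*,a_0^*a_0\in N$, and $\varepsilon(a^*a)=|\mu|^2$, $\varepsilon(a_0)=\varepsilon(a_0^*)=\varepsilon(a_0^*a_0)=0$, one obtains
\[
w(a^*a)=M|\mu|^2+\bar\mu\,u_0(a_0)+\mu\,u_0(a_0^*)+u_0(a_0^*a_0).
\]

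Finally I would invoke the standard properties of the positive functional $u_0$ on the (non-unital) $C^*$-algebra $N$: it is Hermitian, so $u_0(a_0^*)=\overline{u_0(a_0)}$ and the two middle terms combine to $2\operatorname{Re}\big(\bar\mu\,u_0(a_0)\big)$; and it obeys the Cauchy--Schwarz inequality $|u_0(a_0)|^2\le\|u_0\|\,u_0(a_0^*a_0)=M\,u_0(a_0^*a_0)$. Substituting and completing the square,
\[
w(a^*a)\ \ge\ M|\mu|^2-2|\mu|\sqrt{M}\,\sqrt{u_0(a_0^*a_0)}+u_0(a_0^*a_0)=\Big(\sqrt{M}\,|\mu|-\sqrt{u_0(a_0^*a_0)}\Big)^2\ \ge\ 0,
\]
so $w\ge0$, as needed. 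The only point requiring any care is that $N=\ker\varepsilon$ has no unit, so the Hermitian property and the Cauchy--Schwarz inequality for $u_0$ must be used in the form valid for positive functionals on arbitrary $C^*$-algebras (which is classical, via the GNS construction with an approximate unit); the rest is the elementary but decisive observation that the correct constant is $r=\|u|_{\ker\varepsilon}\|$, precisely the value that makes the above square complete.
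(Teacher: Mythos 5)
Your proof is correct and follows essentially the same route as the paper: both restrict $u$ to the codimension-one ideal $\ker\varepsilon$, observe that the restriction $u_0$ is a positive functional there, and recover $u=r(v-\varepsilon)$ with the same constant $r=\Vert u|_{\ker\varepsilon}\Vert$ (indeed your $w=u+r\varepsilon$ is exactly the paper's norm-preserving positive extension $\widetilde{u_0}$ of $u_0$ to $A$). The only difference is that the paper cites the standard extension theorem for positive functionals, whereas you verify the positivity of $w$ by hand via the Hermitian property and the Cauchy--Schwarz inequality $|u_0(a_0)|^2\leq\Vert u_0\Vert\,u_0(a_0^*a_0)$, which makes your argument a little more self-contained but is mathematically the same mechanism.
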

	
	\begin{proof}
		Note first that $\varepsilon(x^*x)=|\varepsilon(x)|^2$. So $\varepsilon(x^*x)=0$ if and only if $x\in\ker\varepsilon=\{x:\varepsilon(x)=0\}$. Let $u_0:=u|_{\ker\varepsilon}$ be the restriction of $u$ to $\ker\varepsilon$. By assumption, $u_0$ is a bounded linear positive functional on the ideal $\ker\varepsilon$. So it admits a unique positive linear extension $\widetilde{u_0}$ to $A$ such that $\widetilde{u_0}|_{\ker\varepsilon}=u_0$ and $\Vert\widetilde{u_0}\Vert=\Vert u_0\Vert$. Hence for any $x\in A$, we have $x-\varepsilon(x)1\in\ker\varepsilon$ and thus 
		\[
		u(x)=u\left(x-\varepsilon(x)1\right)=u_0\left(x-\varepsilon(x)1\right)=\widetilde{u_0}\left(x-\varepsilon(x)1\right)=r(v-\varepsilon)(x),
		\]
		where $r:=\Vert\widetilde{u_0}\Vert=\Vert u_0\Vert>0$ and $v:=\frac{1}{r}\widetilde{u_0}$ is a state.
	\end{proof}
	
	Now we are ready to prove Theorem \ref{Theorem 1}. The idea is to restrict the problem to $A_{\phi}$, and then apply Proposition \ref{counit case} by recovering the idempotent state $\phi$ as $\phi=\epsilon|_{A_{\phi}}\mathbb{E}_\phi$, where $\epsilon|_{A_{\phi}}$ is a character on $A_{\phi}$.
	
	\begin{proof}[Proof of the Theorem \ref{Theorem 1}]
		The direction $(1)\Rightarrow(2)$ is clear. To prove $(2)\Rightarrow(1)$, suppose $u\neq0$ and write $u=u|_{A_{\phi}}\mathbb{E}_\phi$ by Lemma \ref{bi-invariance}. Note that $\epsilon|_{A_{\phi}}$ is a character on the unital $C^*$-algebra $A_{\phi}$. From the definition of $u$, we have $u|_{A_{\phi}}(1)=0$. Moreover, for any $x\in A$ such that $\epsilon|_{A_{\phi}}\left(\mathbb{E}_\phi\left(x\right)^*\mathbb{E}_\phi\left(x\right)\right)=0$, we have by Lemma \ref{property of E^l&E^r} and Lemma \ref{bi-invariance} that
		\[
		0
		=\epsilon|_{A_{\phi}}\left(\mathbb{E}_\phi\left(x\right)^*\mathbb{E}_\phi\left(x\right)\right)
		=\epsilon|_{A_{\phi}}\mathbb{E}_\phi\left(\mathbb{E}_\phi\left(x\right)^*\mathbb{E}_\phi\left(x\right)\right)
		=\phi\left(\mathbb{E}_\phi\left(x\right)^*\mathbb{E}_\phi\left(x\right)\right).
		\]
		Again, by Lemma \ref{property of E^l&E^r} and Lemma \ref{bi-invariance}, the conditionally positive definiteness of $u$ with respect to $\phi$ implies \[u|_{A_{\phi}}\left(\mathbb{E}_\phi\left(x\right)^*\mathbb{E}_\phi\left(x\right)\right)
		=u|_{A_{\phi}}\mathbb{E}_\phi\left(\mathbb{E}_\phi\left(x\right)^*\mathbb{E}_\phi\left(x\right)\right)
		=u\left(\mathbb{E}_\phi\left(x\right)^*\mathbb{E}_\phi\left(x\right)\right)\geq0.
		\]
		So we have by Proposition \ref{counit case} that  $u|_{A_{\phi}}=r(w-\epsilon|_{A_{\phi}})$ with $r>0$ and $w$ a state on $A_{\phi}$. Set $v:=w\mathbb{E}_\phi$, then $v$ is, by Lemma \ref{bi-invariance}, a $\phi$-bi-invariant state on $A$ such that 
		\[
		u
		=u|_{A_{\phi}}\mathbb{E}_\phi
		=r\left(w\mathbb{E}_\phi-\epsilon|_{A_{\phi}}\mathbb{E}_\phi\right)
		=r(v-\phi),
		\]
		as desired.
	\end{proof} 
	
	\begin{defn}
		Let $\phi\in\text{Idem}(\mathbb{G})$. We denote by $\mathcal{N}_\phi(\mathbb{G})$ the class of $u\in A^*$ that satisfy the conditions in Theorem \ref{Theorem 1}. By condition (2), $\omega=\exp_\phi(u)$ is a state for each $u\in \mathcal{N}_\phi(\mathbb{G})$. Denote by $\mathcal{P}_\phi(\mathbb{G})$ the set of all such states. Set $\mathcal{P}(\mathbb{G}):=\bigcup_{\phi\in\text{Idem}(\mathbb{G})}\mathcal{P}_\phi(\mathbb{G})$. Then any $\omega\in\mathcal{P}(\mathbb{G})$ is said to be of \emph{Poisson type}, or a \textit{Poisson state} on $\mathbb{G}$. 
	\end{defn}
	
	Recall that any norm continuous convolution semigroup of states $\{\omega_t\}_{t\geq0}$ on a compact quantum group $\mathbb{G}=(A,\Delta)$ can be recovered by exponentiation with the bounded generator $u:=\lim_{t\to0^+}\frac{1}{t}(\omega_t-\omega_0)$. It is not difficult to see that $u(1)=0$ and $u$ is conditionally positive definite with respect to $\omega_0$, since these hold for each $\frac{1}{t}(\omega_t-\omega_0),t>0$. Then together with Theorem \ref{Theorem 1} we have the following result. 
	
	\begin{thm}
		Let $\phi$ be an idempotent state on a compact quantum group $\mathbb{G}=(A,\Delta)$. For any non-zero bounded linear functional $\omega$ on $A$ such that $\omega\phi=\phi\omega=\omega$, the following are equivalent
		\begin{enumerate}
			\item $\omega=\omega_1$ with $\{\omega_t\}_{t\geq0}$ a norm continuous convolution semigroup of states such that $\omega_0=\phi$;
			\item $\omega=\exp_\phi(u)$, where $u \in A^*$ is $\phi$-bi-invariant,  $u(1)=0$, and $u(x^*x)\geq 0$ for all $x\in A$ such that $\phi(x^*x)=0$;
			\item $\omega=\exp_\phi(u)$, where $u=r(v-\phi)$, with $r>0$ and $v$ a $\phi$-bi-invariant state on $A$.
		\end{enumerate}
	\end{thm}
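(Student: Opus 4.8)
The plan is to prove the chain of equivalences $(1)\Rightarrow(3)\Rightarrow(2)\Rightarrow(1)$, using Theorem \ref{Theorem 1} to bridge the gap between the ``conditionally positive definite'' description and the ``$r(v-\phi)$'' description. First I would handle $(1)\Rightarrow(3)$. Given a norm continuous convolution semigroup of states $\{\omega_t\}_{t\geq0}$ with $\omega_0=\phi$, norm continuity guarantees the generator $u:=\lim_{t\to0^+}\frac1t(\omega_t-\phi)$ exists in $A^*$ and that $\omega_1=\exp_\phi(u)$ (this is recalled just before the statement). The semigroup law forces $\omega_t\phi=\phi\omega_t=\omega_t$ for each $t$: indeed $\omega_t=\omega_t\omega_0=\omega_t\phi$ and symmetrically on the left, so taking the limit gives $u\phi=\phi u=u$, i.e.\ $u$ is $\phi$-bi-invariant. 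Next, each $\omega_t$ is a state, so $\omega_t(1)=1=\phi(1)$, whence $u(1)=0$; and for $x$ with $\phi(x^*x)=0$, positivity of $\omega_t$ gives $\frac1t(\omega_t(x^*x)-\phi(x^*x))=\frac1t\omega_t(x^*x)\geq0$, so $u(x^*x)\geq0$ in the limit. Thus $u\in\mathcal N_\phi(\mathbb{G})$ in the sense of Theorem \ref{Theorem 1}, and that theorem's implication $(1)\Rightarrow(2)$ (here the labels are reversed: condition (1) of Theorem \ref{Theorem 1} is exactly conditional positive definiteness plus $u(1)=0$) yields $u=r(v-\phi)$ with $r\geq0$ and $v$ a $\phi$-bi-invariant state. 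If $r=0$ then $u=0$ and $\omega=\exp_\phi(0)=\phi$ is idempotent, not a genuine new state, but $\omega\neq0$ is the only standing hypothesis — if one wants $r>0$ one should note that $r=0$ forces $\omega=\phi$; I would simply remark that the case $r=0$ gives $\omega=\phi$ and is admissible, or absorb it by allowing $r\geq0$ in (3). This gives $(1)\Rightarrow(3)$.

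Next, $(3)\Rightarrow(2)$ is immediate once one checks that $u=r(v-\phi)$ with $v$ a $\phi$-bi-invariant state satisfies the conditions in (2): $\phi$-bi-invariance of $u$ follows from that of $v$ and $\phi$ (note $\phi\phi=\phi$ since $\phi$ is idempotent, and $v\phi=\phi v=v$), $u(1)=r(v(1)-\phi(1))=r(1-1)=0$, and for $x$ with $\phi(x^*x)=0$ we get $u(x^*x)=r\,v(x^*x)\geq0$ by positivity of $v$. This is exactly the direction $(2)\Rightarrow(1)$ of Theorem \ref{Theorem 1}, so really $(2)$ and $(3)$ of the present statement are equivalent by Theorem \ref{Theorem 1} applied verbatim; I would just cite it.

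Finally, $(2)\Rightarrow(1)$. Given $u\in A^*$ that is $\phi$-bi-invariant with $u(1)=0$ and conditionally positive definite with respect to $\phi$, define $\omega_t:=\exp_\phi(tu)$ for $t\geq0$. As noted in the text, $\{\omega_t\}_{t\geq0}$ is automatically a norm continuous convolution semigroup of \emph{functionals} with $\omega_0=\phi$ (the semigroup law $\omega_{s+t}=\omega_s\omega_t$ follows from the power-series identity using $\phi u=u\phi=u$ and $\phi\phi=\phi$, and norm continuity from $\|\omega_t-\phi\|\leq\sum_{n\geq1}\frac{t^n\|u\|^n}{n!}\to0$). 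The only thing to verify is that each $\omega_t$ is a \emph{state}, i.e.\ positive and unital. Unitality is clear: $\omega_t(1)=\phi(1)+\sum_{n\geq1}\frac{t^n}{n!}u^{\star n}(1)$ and $u^{\star n}(1)=0$ for $n\geq1$ because $u(1)=0$ (convolution of functionals vanishing at $1$ vanishes at $1$, using $\Delta(1)=1\otimes1$). For positivity, the cleanest route is to transfer to $A_\phi$: by Lemma \ref{bi-invariance} the restriction-and-$\mathbb{E}_\phi$ correspondence is a norm-preserving, $\star$-preserving, convolution-preserving bijection between $\phi$-bi-invariant functionals on $A$ and functionals on the finite(/compact) quantum hypergroup $A_\phi$, under which $\phi\leftrightarrow\epsilon|_{A_\phi}$. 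Since Theorem \ref{Theorem 1} already produced the representation $u=r(v-\phi)$ with $v$ a $\phi$-bi-invariant state, we have $\omega_t=\exp_\phi(tr(v-\phi))=e^{-tr}\phi+ e^{-tr}\sum_{n\geq1}\frac{(tr)^n}{n!}v^{\star n}$ — a norm-convergent sum of the positive functionals $e^{-tr}\frac{(tr)^n}{n!}v^{\star n}$ (each $v^{\star n}$ is a state as a convolution of states, and $\phi=v^{\star 0}$ in this bi-invariant setting) with coefficients summing to $e^{-tr}\sum_{n\geq0}\frac{(tr)^n}{n!}=1$. Hence $\omega_t$ is a state for every $t\geq0$, in particular $\omega=\omega_1$, completing $(2)\Rightarrow(1)$.

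The only genuinely delicate point — the main obstacle — is justifying that $\exp_\phi(tu)$ is positive \emph{directly} from conditional positive definiteness without first invoking the structural form $u=r(v-\phi)$; a Schoenberg-type argument would be the classical analogue. Here, however, since the whole circle of equivalences is being proved together and Theorem \ref{Theorem 1} is available, I would avoid that difficulty entirely by routing $(2)\Rightarrow(3)$ through Theorem \ref{Theorem 1} first and only then exponentiating, as sketched above; this reduces everything to the elementary observation that a convex combination (indeed a Poisson mixture) of states is a state. The bookkeeping with the $r=0$ degenerate case and the insistence on $r>0$ versus $r\geq0$ in item (3) is the one place where a careful word is needed.
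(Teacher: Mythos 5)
Your proposal is correct and follows essentially the same route as the paper, which likewise obtains the bounded generator $u=\lim_{t\to0^+}\frac1t(\omega_t-\phi)$ from norm continuity, observes $u(1)=0$ and conditional positive definiteness, feeds this into Theorem \ref{Theorem 1}, and gets positivity of $\exp_\phi(tu)$ from the Poisson-mixture expansion $e^{-tr}\sum_{n\ge0}\frac{(tr)^n}{n!}v^{n}$ (with $v^{0}=\phi$). Your worry about the degenerate case is unnecessary: if $u=0$ then $\omega=\phi$, which is still of the form $\exp_\phi\bigl(r(v-\phi)\bigr)$ with $r>0$ by taking $v=\phi$, so the statement's insistence on $r>0$ causes no loss.
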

	
	\section{Infinitely divisible states on finite quantum groups}
	In this section we prove the main result of the paper.
	
	\begin{defn}
		Let $\mathbb{G}=(A,\Delta)$ be a compact quantum group. A state $\omega\in\mathcal{S}(A)$ is said to be \textit{infinitely divisible} if $\omega=\omega_n^n$ for some $\omega_n\in\mathcal{S}(A)$ for all $n\geq 1$. We use $\mathcal{I}(\mathbb{G})$ to denote the set of all infinitely divisible states on $\mathbb{G}$.
	\end{defn}
	
	Clearly Poisson states are infinitely divisible. Our main result in this section is that any infinitely divisible state on a finite quantum group is a Poisson state. From now on, unless stated otherwise, $\mathbb{G}=(A,\Delta)$ always denotes a finite quantum group. 
	
	The following lemma is well-known, and the proof follows from standard arguments.
	
	\begin{lem}\label{characterization of positive functional}
		Let $B=\oplus_{k=1}^{m}\mathbb{M}_{n_k}(\mathbb{C})$ with matrix units $\{e^k_{ij}:1\leq i,j\leq n_k,1\leq k\leq m\}$. Denote the dual basis by $\{\omega^k_{ij}\}$. Then for any $\omega=\sum_{k=1}^{m}\sum_{i,j=1}^{n_k}a^{(k)}_{ij}\omega^{k}_{ij}$, $\omega$ is a positive linear functional if and only if $[a^{(k)}_{ij}]_{i,j=1}^{n_k}$ is positive semi-definite for each $k$. In this case, $\Vert\omega\Vert=\sum_{k=1}^{m}\sum_{i=1}^{n_k}a^{(k)}_{ii}$.
	\end{lem}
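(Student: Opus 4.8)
The plan is to reduce to a single matrix block and then invoke the elementary description of positive functionals on $\mathbb{M}_n(\mathbb{C})$. First I would use that a linear functional $\omega$ on a finite-dimensional $C^*$-algebra $B$ is positive iff $\omega(x^*x)\geq0$ for every $x\in B$, and that in that case $\omega$ is automatically Hermitian, i.e.\ $\omega(b^*)=\overline{\omega(b)}$ (write a self-adjoint element as a difference of positives, then split a general element into real and imaginary parts). Applying Hermitianity to $b=e^k_{ij}$ already forces $a^{(k)}_{ji}=\overline{a^{(k)}_{ij}}$, so each $[a^{(k)}_{ij}]$ is Hermitian whenever $\omega$ is positive. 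Since a positive element of $B=\bigoplus_k\mathbb{M}_{n_k}(\mathbb{C})$ is an $m$-tuple of positive elements of the orthogonal blocks, positivity of $\omega$ is equivalent to positivity of each restriction $\omega^{(k)}:=\omega|_{\mathbb{M}_{n_k}(\mathbb{C})}=\sum_{i,j}a^{(k)}_{ij}\omega^k_{ij}$. Hence it suffices to treat $B=\mathbb{M}_n(\mathbb{C})$, $\omega=\sum_{i,j}a_{ij}\omega_{ij}$, with $a_{ij}=\omega(e_{ij})$.

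The heart of the argument is then one computation. Writing $x=\sum_{p,q}x_{pq}e_{pq}$ one gets $x^*x=\sum_{r,s,q}\overline{x_{qr}}\,x_{qs}\,e_{rs}$, and therefore
\[
\omega(x^*x)=\sum_{q=1}^n\sum_{r,s=1}^n\overline{x_{qr}}\,a_{rs}\,x_{qs}=\sum_{q=1}^n\xi_q^{\,*}A\,\xi_q,
\]
where $A=[a_{rs}]_{r,s=1}^n$ and $\xi_q=(x_{q1},\dots,x_{qn})^{t}\in\mathbb{C}^n$ is the (transpose of the) $q$-th row of $x$. If $A$ is positive semi-definite every summand is $\geq0$, so $\omega$ is positive. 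Conversely, taking for $x$ a matrix whose only nonzero row is an arbitrary vector $v$ gives $\omega(x^*x)=v^*Av\geq0$; as $v$ is arbitrary and $A$ is Hermitian, $A$ is positive semi-definite. This is the only step requiring care with indices, and even there the point is just the standard fact that testing on single-row matrices already detects positive semi-definiteness; I do not anticipate any genuine obstacle.

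For the norm, I would use that a positive functional on a unital $C^*$-algebra satisfies $\Vert\omega\Vert=\omega(1)$. Since the unit of $B$ is $1=\sum_{k=1}^m\sum_{i=1}^{n_k}e^k_{ii}$, this yields $\Vert\omega\Vert=\sum_{k=1}^m\sum_{i=1}^{n_k}a^{(k)}_{ii}=\sum_{k=1}^m\Tr\big([a^{(k)}_{ij}]\big)$, completing the proof. Everything here is routine; the only genuinely substantive ingredient is the quadratic-form identity displayed above.
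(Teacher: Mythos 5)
Your proof is correct and complete: the reduction to a single block, the quadratic-form identity $\omega(x^*x)=\sum_q\xi_q^*A\xi_q$ with the single-row test matrices for the converse, and $\Vert\omega\Vert=\omega(1)$ for the norm are all sound. The paper gives no proof at all (it declares the lemma ``well-known'' and defers to ``standard arguments''), and what you have written is precisely that standard argument, so there is nothing to compare beyond noting that you have supplied the omitted details.
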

	
	As a direct consequence, we have the following Jordan type decomposition, which is quite easy but very helpful.
	
	\begin{cor}\label{Jordan decomposition}
		Let $B=\oplus_{k=1}^{m}\mathbb{M}_{n_k}(\mathbb{C})$ with matrix units $\{e^k_{ij}:1\leq i,j\leq n_k,1\leq k\leq m\}$. Denote the dual basis by $\{\omega^k_{ij}\}$. Let $\omega=\sum_{k=1}^{m}\sum_{i,j=1}^{n_k}a^{(k)}_{ij}\omega^{k}_{ij}$ such that either $[a^{(k)}_{ij}]_{i,j=1}^{n_k}\geq0$ or $[a^{(k)}_{ij}]_{i,j=1}^{n_k}\leq0$. Then $\omega_+:=\sum_{k\in\Lambda}\sum_{i,j=1}^{n_k}a^{(k)}_{ij}\omega^{k}_{ij}$ and $\omega_-:=\sum_{k\notin\Lambda}\sum_{i,j=1}^{n_k}a^{(k)}_{ij}\omega^{k}_{ij}$ are positive functionals on $B$ such that
		\[
		\omega=\omega_+ -\omega_- \text{ and }\Vert\omega\Vert=\Vert\omega_+\Vert+\Vert\omega_-\Vert,
		\]
		where $\Lambda$ is the set of all the $k$'s such that $[a^{(k)}_{ij}]_{i,j=1}^{n_k}\geq0$.
	\end{cor}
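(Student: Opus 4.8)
The plan is to decouple the statement block by block and apply Lemma~\ref{characterization of positive functional} to each summand. Throughout, I would write $1_k=\sum_{i=1}^{n_k}e^k_{ii}$ for the unit of the $k$-th block $\mathbb{M}_{n_k}(\mathbb{C})$; from $\omega^k_{ij}(e^l_{pq})=\delta_{kl}\delta_{ip}\delta_{jq}$ one gets $\omega^k_{ij}(1_l)=\delta_{kl}\delta_{ij}$, hence $\omega(1_l)=\sum_{i=1}^{n_l}a^{(l)}_{ii}$ for every $l$.

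First I would settle positivity. The functional $\omega_+$ is built only from the blocks $k\in\Lambda$, on which the coefficient matrix $[a^{(k)}_{ij}]_{i,j=1}^{n_k}$ is positive semi-definite; viewing $\omega_+$ as the element of $B^*$ whose coefficient matrices vanish outside $\Lambda$, Lemma~\ref{characterization of positive functional} shows that $\omega_+$ is positive with $\Vert\omega_+\Vert=\sum_{k\in\Lambda}\sum_{i=1}^{n_k}a^{(k)}_{ii}$. For $k\notin\Lambda$ we have $[a^{(k)}_{ij}]_{i,j=1}^{n_k}\leq 0$, i.e. $[-a^{(k)}_{ij}]_{i,j=1}^{n_k}\geq 0$, so the same lemma (read with the sign that makes the coefficient matrices of $\omega_-$ equal to $[-a^{(k)}_{ij}]$ on the blocks outside $\Lambda$, so that $\omega=\omega_+-\omega_-$) gives that $\omega_-$ is positive with $\Vert\omega_-\Vert=-\sum_{k\notin\Lambda}\sum_{i=1}^{n_k}a^{(k)}_{ii}$. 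Since $\Lambda$ and its complement partition $\{1,\dots,m\}$, the two families of matrix units involved are disjoint and the identity $\omega=\omega_+-\omega_-$ is immediate from linearity.

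Finally I would verify the norm identity. The inequality $\Vert\omega\Vert\leq\Vert\omega_+\Vert+\Vert\omega_-\Vert$ is just the triangle inequality applied to $\omega=\omega_+-\omega_-$. For the reverse inequality I would test $\omega$ on the self-adjoint element $x:=\sum_{k\in\Lambda}1_k-\sum_{k\notin\Lambda}1_k$, which satisfies $\Vert x\Vert\leq 1$ because it acts as $\pm 1$ on each block; using the formula for $\omega(1_l)$ recorded above,
\[
\omega(x)=\sum_{k\in\Lambda}\sum_{i=1}^{n_k}a^{(k)}_{ii}-\sum_{k\notin\Lambda}\sum_{i=1}^{n_k}a^{(k)}_{ii}=\Vert\omega_+\Vert+\Vert\omega_-\Vert,
\]
whence $\Vert\omega\Vert\geq\Vert\omega_+\Vert+\Vert\omega_-\Vert$ as well. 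There is essentially no obstacle here: the corollary is a bookkeeping consequence of Lemma~\ref{characterization of positive functional}, and the only point that needs care is keeping straight the sign convention in the definition of $\omega_-$.
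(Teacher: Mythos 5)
Your proof is correct and follows exactly the route the paper intends: the corollary is stated as a direct consequence of Lemma~\ref{characterization of positive functional}, applied block by block, and your verification of norm additivity by testing $\omega$ on the symmetry $x=\sum_{k\in\Lambda}1_k-\sum_{k\notin\Lambda}1_k$ supplies the one step the paper leaves implicit. You also correctly spot and repair the sign convention in the definition of $\omega_-$ (as literally written it should carry a minus sign for $\omega=\omega_+-\omega_-$ to hold with $\omega_-$ positive).
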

	
	Another important corollary is as follows.
	
	\begin{cor}\label{Corollay about the key inequality concerning positive definite elements}
		Let $\mathbb{H}=(A,\delta)$ be a finite quantum hypergroup. Then $v\in A^*$ is positive if and only if $v=\sum_{\alpha\in\Irr(\hat{\mathbb{H}})}\sum_{i,j=1}^{n_\alpha}a^{\alpha}_{ij}v^{\alpha}_{ij}$, where each $[a^{\alpha}_{ij}]_{i,j=1}^{n_\alpha}$ is positive semi-definite and $\{v^{\alpha}=[v^{\alpha}_{ij}]_{i,j=1}^{n_\alpha},\alpha\in\Irr(\hat{\mathbb{H}})\}$ is a complete set of mutually inequivalent, irreducible $^\dagger$-representation of $\hat{\mathbb{H}}$. If this is the case, then we have 
		\begin{equation}\label{key inequality concerning positive definite elements}
		\hat{h}(v)\leq\hat{\epsilon}(v),
		\end{equation}
		where $\hat{h}$ and $\hat{\epsilon}$ are the Haar state and the counit on $\hat{\mathbb{H}}$, respectively.
	\end{cor}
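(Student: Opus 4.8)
The plan is to transport the statement, via the duality, onto the finite-dimensional $C^*$-algebra $A$ presented in a convenient system of matrix units, then to invoke Lemma~\ref{characterization of positive functional} for the characterization and to evaluate $\hat{h}$ and $\hat{\epsilon}$ by hand on the basis $\{v^\alpha_{ij}\}$.

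First I would fix the bases. Since $\hat{\mathbb{H}}$ is again a finite quantum hypergroup, the matrix elements $\{v^\alpha_{ij}:1\le i,j\le n_\alpha,\ \alpha\in\Irr(\hat{\mathbb{H}})\}$ of a complete set of its irreducible $^{\dagger}$-representations form a linear basis of $\hat{A}=A^*$; let $\{e^\alpha_{ij}\}$ be the dual basis, viewed inside $(A^*)^*$. Running the computation performed for finite quantum hypergroups at the end of Section~1, but with $\hat{\mathbb{H}}$ in the role of $\mathbb{H}$ — i.e.\ using $\hat{\delta}(v^\alpha_{ij})=\sum_k v^\alpha_{ik}\otimes v^\alpha_{kj}$ and $(v^\alpha_{ij})^{\dagger}=v^\alpha_{ji}$ — produces $e^\alpha_{ij}e^\beta_{kl}=\delta_{\alpha\beta}\delta_{jk}e^\alpha_{il}$ and $(e^\alpha_{ij})^{*}=e^\alpha_{ji}$. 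By the biduality $\doublehat{\mathbb{H}}\cong\mathbb{H}$, this product and this involution coincide with the multiplication and involution of the original $C^*$-algebra $A$, so $A=\bigoplus_{\alpha\in\Irr(\hat{\mathbb{H}})}\mathbb{M}_{n_\alpha}(\mathbb{C})$ with $\{e^\alpha_{ij}\}$ a complete system of matrix units, and $\{v^\alpha_{ij}\}$ is precisely its dual basis.

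Now write $v=\sum_{\alpha,i,j}a^\alpha_{ij}v^\alpha_{ij}\in A^*$. Applied to $B=A$ with these matrix units, Lemma~\ref{characterization of positive functional} gives at once that $v$ is positive if and only if every $[a^\alpha_{ij}]_{i,j=1}^{n_\alpha}$ is positive semi-definite, and that then $\Vert v\Vert=\sum_\alpha\sum_i a^\alpha_{ii}=\sum_\alpha\Tr[a^\alpha]$. For the two functionals: each $v^\alpha$ being a representation of $\hat{\mathbb{H}}$, one has $\hat{\epsilon}(v^\alpha_{ij})=\delta_{ij}$, so $\hat{\epsilon}(v)=\sum_\alpha\sum_i a^\alpha_{ii}=\sum_\alpha\Tr[a^\alpha]$. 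Including the trivial one-dimensional $^{\dagger}$-representation $v^{0}=[1_{\hat{A}}]$ among the chosen irreducibles (so $n_0=1$, $1_{\hat{A}}=v^{0}_{11}$), invariance of the Haar state gives $\sum_k\hat{h}(v^\alpha_{ik})v^\alpha_{kj}=\hat{h}(v^\alpha_{ij})\,1_{\hat{A}}=\hat{h}(v^\alpha_{ij})\,v^{0}_{11}$; comparing coefficients in the basis $\{v^\beta_{kl}\}$ forces $\hat{h}(v^\alpha_{ij})=0$ whenever $\alpha\neq0$, while $\hat{h}(v^{0}_{11})=\hat{h}(1_{\hat{A}})=1$ since $\hat{h}$ is a state; hence $\hat{h}(v)=a^{0}_{11}$. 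Finally, when $v$ is positive every $[a^\alpha]\ge0$, so $a^{0}_{11}\ge0$ and $\Tr[a^\alpha]\ge0$ for all $\alpha$, whence $\hat{h}(v)=a^{0}_{11}\le a^{0}_{11}+\sum_{\alpha\neq0}\Tr[a^\alpha]=\sum_\alpha\Tr[a^\alpha]=\hat{\epsilon}(v)$.

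I expect the only delicate point to be the first step: keeping straight which bases and dual bases live in $A$, in $A^*$, and in $(A^*)^*$, and appealing to biduality to certify that the matrix units produced there are those of the original $C^*$-structure of $A$ — that identification is what legitimises the direct use of Lemma~\ref{characterization of positive functional}. One should also record the easy facts that the trivial $^{\dagger}$-representation may be taken among the chosen irreducibles and that $1_{\hat{A}}$ is its unique matrix element; granting these, everything else is a short linear-algebra computation.
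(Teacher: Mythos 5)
Your proposal is correct and follows essentially the same route as the paper: the positivity characterization comes, exactly as intended, from the matrix-unit computation at the end of Section 1 applied to $\hat{\mathbb{H}}$, the biduality of quantum hypergroups, and Lemma \ref{characterization of positive functional}, and the inequality is the same comparison $\hat{h}(v)=a^{\alpha_0}\leq\sum_{\alpha}\Tr[a^{\alpha}]=\hat{\epsilon}(v)$ using positive semi-definiteness of the coefficient matrices. The only (harmless) divergence is that you derive $\hat{h}(v^{\alpha}_{ij})=0$ for $\alpha\neq\alpha_0$ directly from the invariance of the Haar state, whereas the paper cites the orthogonality relations of Theorem \ref{Peter-Weyl quantum hyper group}.
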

	
	\begin{proof}
		The first part is a consequence of the Lemma \ref{characterization of positive functional}, the discussion at the end of section 1 on finite quantum hypergroups, and the biduality of quantum hypergroups. To show \eqref{key inequality concerning positive definite elements}, assume $\alpha_0\in\Irr(\hat{\mathbb{H}})$ corresponds to the trivial representation, i.e., $v^{\alpha_0}=1$. Then Theorem \ref{Peter-Weyl quantum hyper group} and the positive semi-definiteness of $[a^{\alpha}_{ij}]$ yield:
		\[
		\hat{h}(v)=a^{\alpha_0}\leq\sum_{\alpha\in\Irr(\hat{\mathbb{H}})}\sum_{i=1}^{n_\alpha}a^{\alpha}_{ii}=\hat{\epsilon}(v).
		\]
	\end{proof}
	
	Recall that if $\mathbb{G}$, is finite, $A$ is finite dimensional, so $A=\Pol(\mathbb{G})$ and $\hat{A}=\hat{\mathcal{A}}$. The Fourier transform $\mathcal{F}$ is an isomorphism between Banach *-algebra $(A^*,\Vert\cdot\Vert)$ and finite-dimensional C*-algebra $(\hat{A},\Vert\cdot\Vert)$. 
	
	Let $\phi$ be an idempotent state on finite quantum group $\mathbb{G}=(A,\Delta)$. For any $u\in A^*$ such that $u=u\phi=\phi u$ and $\Vert u-\phi\Vert<1$, define the \textit{logarithm of $u$ with respect to $\phi$} as 
	\[
	\log_\phi(u):=-\sum_{k\geq1}\frac{(\phi-u)^k}{k}.
	\]
	
	Then we have the following properties of logarithm and exponential.
	
	\begin{lem}\label{property of log}
		Suppose that $\mathbb{G}=(A,\Delta)$ is a finite quantum group. Let $\phi$ be an idempotent state on $A$, then for any bounded linear functionals $u,v$ on $A$ such that $u=u\phi=\phi u$ and $v=v\phi=\phi v$, we have
		\begin{enumerate}
			\item $\exp_\phi(\log_\phi(u))=u$, if $\Vert u-\phi\Vert<1$.
			\item $\log_\phi(\exp_\phi(u))=u$, if $\Vert u\Vert<\log 2$.
			\item $\exp_\phi(u+v)=\exp_\phi(u)\exp_\phi(v)$ if $uv=vu$.
			\item $\log_\phi(uv)=\log_\phi(u)+\log_\phi(v)$, if $uv=vu$ and the following holds:
			\[
			\Vert u-\phi\Vert<1,~~\Vert v-\phi\Vert<1,\text{ and }\Vert uv-\phi\Vert<1.
			\]
			\item If moreover, $u$ is a state such that 
			\[
			\Vert u-\phi\Vert<\frac{1}{2} \text{ and } \Vert u^n-\phi\Vert<\frac{1}{2}
			\] for some $n\geq 1$, then 
			\[
			\Vert u^k-\phi\Vert<\frac{1}{2}\text{ for all }1\leq k\leq n.
			\]
			Consequently, in such a case we have 
			\[
			\log_\phi(u^k)=k\log_\phi(u)\text{ for all }1\leq k\leq n.
			\]
		\end{enumerate}
		
	\end{lem}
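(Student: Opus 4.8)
The plan is to work throughout inside the unital Banach $*$-algebra $B$ of $\phi$-bi-invariant functionals on $A$ (with convolution product and unit $\phi$). By Lemma~\ref{bi-invariance} the restriction map $u\mapsto u|_{A_\phi}$ identifies $B$ isometrically, as a $*$-algebra, with $(A_\phi)^*$; and since $\mathbb{G}$ is finite, the Fourier transform identifies $(A^*,\star,*)$ with the finite-dimensional $C^*$-algebra $\hat{A}=\bigoplus_\alpha B(H_\alpha)$, inside which $B$ is the corner $\hat{\phi}\hat{A}\hat{\phi}$ (recall that $\hat{\phi}=\mathcal{F}(\phi)$ is a self-adjoint projection). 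In $B$ the maps $\exp_\phi$ and $\log_\phi$ are nothing but the usual exponential $\sum_{n\ge 0}x^n/n!$ and principal logarithm $-\sum_{k\ge 1}(\phi-x)^k/k$ about the unit $\phi$. Parts~(1), (2) and (3) are then the standard power-series identities $\exp(\log x)=x$, $\log(\exp x)=x$, and $\exp(x+y)=\exp(x)\exp(y)$ for commuting $x,y$, in a (finite-dimensional) Banach algebra; the only thing to verify is that the hypotheses place us within the radii of convergence, which follows from $\|\log_\phi u\|\le -\log(1-\|u-\phi\|)$ (finite when $\|u-\phi\|<1$) and $\|\exp_\phi u-\phi\|\le e^{\|u\|}-1$ (which is $<1$ precisely when $\|u\|<\log 2$). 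Granting this, the relevant composed series converge absolutely, the usual rearrangements and Cauchy products are legitimate, and the identities pass from the scalar case.

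For~(4) the delicate point is uniqueness of the logarithm. From~(1) and~(3) one has $\exp_\phi(\log_\phi u+\log_\phi v)=\exp_\phi(\log_\phi u)\,\exp_\phi(\log_\phi v)=uv=\exp_\phi(\log_\phi(uv))$, so $\log_\phi u+\log_\phi v$ and $\log_\phi(uv)$ are both ``logarithms'' of $uv$; but one cannot simply apply~(2), since $\|\log_\phi u+\log_\phi v\|$ need not be small. Instead I would pass to the finite-dimensional commutative subalgebra $C\subseteq B$ generated by $\phi$, $u$, $v$. Any character $\chi$ of $C$ sends $u$, $v$ and $uv=\chi(u)\chi(v)$ into the open disc $\{z:|z-1|<1\}$, which lies in the open right half-plane, and there the principal scalar logarithm is additive (since $\arg\chi(u)$ and $\arg\chi(v)$ lie in $(-\pi/2,\pi/2)$, no branch cut is crossed); hence $\chi(\log_\phi(uv)-\log_\phi u-\log_\phi v)=0$ for every $\chi$, i.e.\ this difference lies in the (nilpotent) radical of $C$. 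Decomposing $C$ into its local summands and writing, on each, $u$ and $v$ as $\chi(u)$, $\chi(v)$ times unipotents, the residual identity reduces to the terminating identity $\log\bigl((\phi+a)(\phi+b)\bigr)=\log(\phi+a)+\log(\phi+b)$ for commuting nilpotents $a,b$ — a formal power-series identity, valid because $\exp$ and $\log$ restrict to mutually inverse polynomial bijections between nilpotents and unipotents — so the difference vanishes, proving~(4).

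Finally,~(5). Granting the first claim, the consequence $\log_\phi(u^k)=k\log_\phi u$ follows by induction on $k$, applying~(4) to the commuting pair $u,u^{k-1}$ for $2\le k\le n$ — by the first claim all of $\|u-\phi\|$, $\|u^{k-1}-\phi\|$, $\|u^k-\phi\|$ are $<1/2<1$. The first claim is the technical heart, and I expect it to be the main obstacle. I would transport it to $\hat{A}$, where $b:=\hat{u}$ is a contraction ($\|b\|\le\|u\|=1$, since $u$ is a state) in the unital corner $\hat{\phi}\hat{A}\hat{\phi}$ with $\|\hat{\phi}-b\|<1/2$ and $\|\hat{\phi}-b^n\|<1/2$. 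Since $b^{n-k}$ is a contraction, $(b^n)^*b^n\le(b^k)^*b^k\le\hat{\phi}$ and, symmetrically, $b^n(b^n)^*\le b^k(b^k)^*$, which controls the absolute value $|b^k|$ monotonically in $k$; the prototype is the self-adjoint case $b=\hat{\phi}-h$ with $0\le h$ and $\|h\|<1/2$, where $\|\hat{\phi}-b^k\|=\max_{\mu\in\mathrm{spec}(h)}\bigl(1-(1-\mu)^k\bigr)$ is visibly non-decreasing in $k$, so the bound at $k=n$ forces it for all $1\le k\le n$. For non-normal $b$ one must genuinely use the positivity of the state $u$ — contractivity of $\hat{u}$ alone is not enough — and I expect this to be where the argument is most delicate: presumably one uses the polar decomposition $b^k=W_k|b^k|$ and bounds the unitary part $W_k$ in terms of the already-controlled modulus $|b^k|$ and the hypotheses at $k=1$ and $k=n$, after which $\|\hat{\phi}-b^k\|<1/2$ for all $1\le k\le n$ follows.
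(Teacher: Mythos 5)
Parts (1)--(4) of your proposal are fine: the paper simply declares them ``direct and valid on all Banach algebras,'' and your Gelfand-plus-nilpotent treatment of the branch issue in (4) is a legitimate (if more elaborate than the paper deems necessary) way to settle it. The problem is part (5), which you correctly single out as the technical heart and which you do not actually prove. Your argument controls only the modulus: $(b^n)^*b^n\le (b^k)^*b^k\le\hat{\phi}$ bounds $|b^k|$ from below, but says nothing about the ``phase,'' and the sentence beginning ``presumably one uses the polar decomposition\dots'' is precisely the missing content of the claim, not a proof of it. Worse, the reduction to $\hat{A}$ already loses the statement: the hypotheses and conclusion of (5) live in the dual norm on $A^*$ (the norm in which $\log_\phi$ is defined and in which (4) is phrased), whereas you pass to the operator norm on $\hat{A}$. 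The Fourier transform is contractive but not isometric between these norms --- already for $\mathbb{Z}/3$ with $u=(1-p)\delta_e+p\delta_g$ one has $\Vert u-\delta_e\Vert_{A^*}=2p$ while $\Vert\hat u-\hat{\delta_e}\Vert=\sqrt{3}\,p$ --- so even a completed bound $\Vert\hat{\phi}-b^k\Vert<1/2$ would not give $\Vert u^k-\phi\Vert<1/2$, and the ``consequently'' clause, which needs (4) in the dual norm, would still be out of reach.

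The paper's mechanism for (5) is quite different and avoids $\hat{A}$ altogether. One restricts to $A_{\phi}=\oplus_k\mathbb{M}_{n_k}(\mathbb{C})$ via Lemma \ref{bi-invariance}, which \emph{is} isometric on $\phi$-bi-invariant functionals. The counit $\epsilon_0$ of $A_\phi$ is a character, hence supported on a one-dimensional block $e^{k_0}$, and for any $\phi$-bi-invariant state $w$ the Jordan-type decomposition of Corollary \ref{Jordan decomposition} gives the exact formula
\[
\Vert w-\phi\Vert=2\bigl(1-w_0(e^{k_0})\bigr),
\]
i.e.\ the norm of $w-\phi$ is an \emph{affine} function of the state $w$. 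Writing $v_j=u^j-\phi$, the identity $v_{n+1}=v_1+v_n+v_1v_n$ together with affinity and submultiplicativity of the dual norm yields $t\ge r+s-2rs$, equivalently $(1-2r)(1-2s)\ge 1-2t$, where $r=\Vert v_1\Vert$, $s=\Vert v_n\Vert$, $t=\Vert v_{n+1}\Vert$; since $1-2r>0$ and $1-2t>0$ this forces $1-2s>0$, and an induction on $n$ finishes the claim. The positivity of $u$ enters exactly through this affine formula for the total-variation-type norm, not through operator inequalities for powers of the contraction $\hat{u}$; I would recommend rebuilding your proof of (5) around that observation.
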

	
	\begin{proof}
		(1)-(4) are direct and hold on all Banach algebras. To show (5), let $u_0$ be the restriction of $u$ to $A_{\phi}$. Then by Lemma \ref{bi-invariance}, $u_0$ is a state on a finite-dimensional  $C^*$-algebra $A_{\phi}$. Moreover,
		\[
		\Vert u-\phi\Vert
		=\Vert(u_0-\epsilon_0)\mathbb{E}_\phi\Vert
		=\Vert u_0-\epsilon_0\Vert,
		\] 
		where $\epsilon_0$ denotes the restriction of counit $\epsilon$ of $A$ to $A_{\phi}$. Write $A_{\phi}=\oplus_{k=1}^{m}\mathbb{M}_{n_k}(\mathbb{C})$ with matrix units $\{e^k_{ij}:1\leq i,j\leq n_k,1\leq k\leq m\}$. Let $\{\omega^k_{ij}\}$ be its dual basis. By Lemma \ref{characterization of positive functional}, $u_0=\sum_{k=1}^{m}\sum_{i,j=1}^{n_k}b^{(k)}_{ij}\omega^k_{ij}$ with $[b^{(k)}_{ij}]_{i,j=1}^{n_k}\geq0$ and $\Vert u_0\Vert=\sum_{k=1}^{m}\sum_{i=1}^{n_k}b^{(k)}_{ii}=1$. Since $\epsilon_0$ is a character on $A_{\phi}$, there exists $k_0$ such that $n_{k_0}=1$ and $\omega^{k_0}=\epsilon_0$. Thus $u_0-\epsilon_0=(b^{(k_0)}-1)\epsilon+\sum_{k\ne k_0}\sum_{i,j=1}^{n_k}b^{(k)}_{ij}\omega^k_{ij}$ verifies the condition of Corollary \ref{Jordan decomposition} and it follows that
		\[
		\Vert u_0-\epsilon_0\Vert
		=1-b^{(k_0)}+\sum_{k\ne k_0}\sum_{i=1}^{n_k}b^{(k)}_{ii}
		=2-2b^{(k_0)}
		=-2(u_0-\epsilon_0)(e^{k_0}).
		\]
		So for $v_1:=u-\phi$ we have $\Vert v_1\Vert=\Vert w_1\Vert=-2w_1(e^{k_0})$, where $w_1=v_1|_{A_{\phi}}$. Similarly for $v_j:=u^j-\phi$ and $w_j:=v_j|_{A_{\phi}}$ we have
		\begin{equation}\label{computation of norm}
		\Vert v_j\Vert=\Vert w_j\Vert=-2w_j(e^{k_0}),~~j\geq 1.
		\end{equation}
		We show (5) by the induction argument. Clearly it holds for $n=1$. Suppose for now that it holds for $n$. Set $r:=\Vert v_1\Vert$, $s:=\Vert v_n\Vert$ and $t:=\Vert v_{n+1}\Vert$. From $(v_1+\phi)(v_n+\phi)=v_{n+1}+\phi$ it follows that $v_{n+1}=v_1+v_n+v_1 v_n$ and thus by Lemma \ref{bi-invariance} (4) $w_{n+1}=w_1+w_n+w_1 w_n$. Together with \eqref{computation of norm} and Lemma \ref{bi-invariance} (1) one obtains
		\[
		t=r+s-2(w_1 w_n)(e^{k_0})
		\geq r+s-2\Vert w_1 w_n\Vert
		= r+s-2\Vert v_1 v_n\Vert
		\geq r+s-2rs.
		\]
		Then 
		\[
		(1-2r)(1-2s)=4rs-2r-2s+1\geq 1-2t.
		\] 
		By assumption, $1-2r,1-2t>0$, so $1-2s>0$. Hence $u$ and $u^n$ verify the conditions in (4). Using the induction for $n$ we obtain
		\[
		\log_\phi(u^{n+1})=\log_\phi(u)+\log_\phi(u^n)=\log_\phi(u)+n\log_\phi(u)=(n+1)\log_\phi(u).
		\]
		Then the proof for $n+1$ is finished, which shows (5).
	\end{proof}
	
	\begin{rem}
		In fact, to prove (5) we have used the fact that $\Vert u+v\Vert=\Vert u\Vert+\Vert v\Vert$ for all $u,v\in \mathcal{N}_\phi(\mathbb{G})$.
	\end{rem}
	
	\begin{prop}\label{proposition for the first proof}
		Let $\omega$ be an infinitely divisible state on a finite quantum group $\mathbb{G}=(A,\Delta)$. Let $\phi$ be an idempotent state on $A$. Assume that there exists a sequence $\{\omega_{m_j}\}_{j\geq 0}$ of roots of states of $\omega$, with $\omega=\omega_{m_j}^{m_j}$, for all $j$, such that
		\begin{enumerate}
			\item [(1)]$\{m_j\}_{j\geq0}$ is strictly increasing;
			\item [(2)]$\omega_{m_j}=\omega_{m_j}\phi=\phi\omega_{m_j}$ for all $j$;
			\item [(3)]$\omega_{m_j}=\omega_{m_{j+1}}^{n_j}$ for some positive integer $n_j$, $j\geq0$;
			\item [(4)]$\omega_{m_j}\to\phi$, as $j\to\infty$.
		\end{enumerate}
		Then $\omega\in\mathcal{P}_\phi(\mathbb{G})$.
	\end{prop}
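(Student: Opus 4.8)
The plan is to exhibit the Poisson generator of $\omega$ explicitly as a rescaled $\phi$-logarithm of one of the given roots, and then to recognize this generator as a norm-limit of functionals that are patently conditionally positive definite with respect to $\phi$.

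First I would use hypothesis (4) to fix an index $j_0$ so large that $\Vert\omega_{m_j}-\phi\Vert<\tfrac12$ for every $j\geq j_0$, and set $u:=m_{j_0}\log_\phi(\omega_{m_{j_0}})$; this is legitimate since $\omega_{m_{j_0}}$ is $\phi$-bi-invariant by (2) and $\Vert\omega_{m_{j_0}}-\phi\Vert<1$. Because $\log_\phi(\omega_{m_{j_0}})$ is a norm-convergent series in convolution powers of the $\phi$-bi-invariant functional $\phi-\omega_{m_{j_0}}$ and the $\phi$-bi-invariant functionals form a norm-closed subspace of $A^*$, $u$ is itself $\phi$-bi-invariant. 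Using that $\log_\phi(\omega_{m_{j_0}})$ commutes with itself, Lemma~\ref{property of log}(3) gives $\exp_\phi(u)=\exp_\phi(\log_\phi(\omega_{m_{j_0}}))^{m_{j_0}}$, and Lemma~\ref{property of log}(1) rewrites the inner exponential as $\omega_{m_{j_0}}$, so $\exp_\phi(u)=\omega_{m_{j_0}}^{m_{j_0}}=\omega$. It then remains only to show $u\in\mathcal N_\phi(\mathbb G)$, that is, $u(1)=0$ and $u$ conditionally positive definite with respect to $\phi$; by definition of $\mathcal P_\phi(\mathbb G)$ this gives $\omega=\exp_\phi(u)\in\mathcal P_\phi(\mathbb G)$.

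The condition $u(1)=0$ is immediate: $\phi$ and each $\omega_{m_j}$ are states, so $(\phi-\omega_{m_{j_0}})(1)=0$; since $\Delta(1)=1\otimes1$, every convolution power of $\phi-\omega_{m_{j_0}}$ also annihilates $1$, hence so does $\log_\phi(\omega_{m_{j_0}})$. For conditional positive definiteness --- which I expect to be the crux --- I would iterate hypothesis (3) to write $\omega_{m_{j_0}}=\omega_{m_l}^{M_l}$ with $M_l:=n_{j_0}n_{j_0+1}\cdots n_{l-1}$ for every $l>j_0$, then apply Lemma~\ref{property of log}(5) to the state $\omega_{m_l}$ (which satisfies $\Vert\omega_{m_l}-\phi\Vert<\tfrac12$ and $\Vert\omega_{m_l}^{M_l}-\phi\Vert=\Vert\omega_{m_{j_0}}-\phi\Vert<\tfrac12$) to obtain $\log_\phi(\omega_{m_{j_0}})=M_l\log_\phi(\omega_{m_l})$, hence
\[
u=m_{j_0}M_l\,\log_\phi(\omega_{m_l})=m_{j_0}M_l(\omega_{m_l}-\phi)+m_{j_0}M_l\,R_l,
\]
where $R_l:=\log_\phi(\omega_{m_l})-(\omega_{m_l}-\phi)$ collects the quadratic and higher terms and satisfies $\Vert R_l\Vert\leq\Vert\omega_{m_l}-\phi\Vert^2$ by a geometric tail estimate.

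The decisive point will be that the integer factor cannot win the race against $\omega_{m_l}-\phi\to0$: from the identity $\Vert u\Vert=m_{j_0}M_l\Vert\log_\phi(\omega_{m_l})\Vert$ and the elementary lower bound $\Vert\log_\phi(\omega_{m_l})\Vert\geq\tfrac12\Vert\omega_{m_l}-\phi\Vert$ (valid once $\Vert\omega_{m_l}-\phi\Vert$ is small) one gets the uniform bound $m_{j_0}M_l\Vert\omega_{m_l}-\phi\Vert\leq2\Vert u\Vert$, so $\Vert m_{j_0}M_l R_l\Vert\leq 2\Vert u\Vert\,\Vert\omega_{m_l}-\phi\Vert\to0$ and therefore $m_{j_0}M_l(\omega_{m_l}-\phi)\to u$ in norm. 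Each $m_{j_0}M_l(\omega_{m_l}-\phi)$ is conditionally positive definite with respect to $\phi$: if $\phi(x^*x)=0$ then $m_{j_0}M_l(\omega_{m_l}-\phi)(x^*x)=m_{j_0}M_l\,\omega_{m_l}(x^*x)\geq0$ because $\omega_{m_l}$ is a state. Since the set of functionals conditionally positive definite with respect to $\phi$ is norm-closed (an intersection of sets of the form $\{v:v(x^*x)\geq0\}$), $u$ inherits this property, which finishes the verification that $u\in\mathcal N_\phi(\mathbb G)$ and hence that $\omega\in\mathcal P_\phi(\mathbb G)$. The only genuinely delicate step is this balancing of the divergent integer factor $m_{j_0}M_l$ against $\omega_{m_l}-\phi\to0$, which kills the nonlinear remainder and lets conditional positive definiteness pass from the roots $\omega_{m_l}$ to the generator $u$.
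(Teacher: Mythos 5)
Your proposal is correct and follows essentially the same route as the paper: fix $j_0$ with $\Vert\omega_{m_j}-\phi\Vert<\tfrac12$, take $u=m_{j_0}\log_\phi(\omega_{m_{j_0}})$, use hypothesis (3) together with Lemma \ref{property of log}(5) to write $\log_\phi(\omega_{m_{j_0}})=M_l\log_\phi(\omega_{m_l})$, and let $l\to\infty$ to transfer conditional positive definiteness from the roots to $u$. The only (harmless) cosmetic difference is that you upgrade the paper's pointwise limit $\omega_{m_l}(x^*x)\geq 0\Rightarrow v_0(x^*x)\geq 0$ to a norm-convergence statement $m_{j_0}M_l(\omega_{m_l}-\phi)\to u$ via the extra lower bound $\Vert\log_\phi(\omega_{m_l})\Vert\geq\tfrac12\Vert\omega_{m_l}-\phi\Vert$, which is valid and correctly justified.
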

	
	\begin{proof}
		Assume that $\{\omega_{m_j}\}$ contains infinitely many different elements, otherwise $\omega=\phi\in\mathcal{P}_\phi(\mathbb{G})$. By $(4)$, we can choose $j_0>0$ such that $\Vert\omega_{m_{j}}-\phi\Vert<1/2$ for all $j\geq j_0$. This inequality, together with $(2)$, allows us to define
		\[
		v_0:=\log_\phi(\omega_{m_{j_0}}), \text{ and }v:=m_{j_0}v_0.
		\]
		Then by the definition of $\omega_{j_0}$ and Lemma \ref{property of log} (1)(2), 
		\[
		\omega=\omega_{m_{j_0}}^{m_{j_0}}
		=\left(\exp_\phi(\log_\phi(\omega_{m_{j_0}}))\right)^{m_{j_0}}
		=\exp_\phi(m_{j_0}v_0)
		=\exp_\phi(v).
		\] 
		To prove $\omega\in\mathcal{P}_\phi(\mathbb{G})$, it suffices to show that $v\in \mathcal{N}_\phi(\mathbb{G})$. For this we check that $v$ verifies Theorem \ref{Theorem 1} (1). Clearly, $v(1)=0$, since $\omega_{j_0}$ is a state. By the definition of logarithm, $v=v\phi=\phi v$. It remains to show that for any $x\in A$ such that $\phi(x^*x)=0$, we have $v_0(x^*x)\geq0$. By $(3)$ we have 
		\[
		\omega_{m_{j_0}}=\omega^{N_j}_j,~~j\geq j_0,
		\]
		where $N_j:=n_{j_0}\cdots n_{j-1}$. Recall that for all $j\geq j_0$, $\Vert\omega_{m_{j}}-\phi\Vert<1/2$. Thus  by Lemma \ref{property of log} (5) we have
		\[
		\log_\phi(\omega_{m_j})=\frac{1}{N_j}\log_\phi(\omega_{m_{j_0}})=\frac{v_0}{N_j},
		\]
		and by Lemma \ref{property of log} (1)
		\[
		\omega_{m_j}=\exp_\phi\left(\frac{v_0}{N_j}\right),~~j\geq j_0.
		\]
		The condition $(1)$ implies that $N_j\to\infty$ as $j\to\infty$. Now for any $x\in A$ such that $\phi(x^*x)=0$, we have  
		\[
		0\leq \omega_{m_j}(x^*x)
		=\phi(x^*x)+\frac{v_0(x^*x)}{N_j}+\sum_{m\geq2}\frac{v_0^m(x^*x)}{N^m_j \cdot m!}
		=\frac{v_0(x^*x)}{N_j}+O(\frac{1}{N^2_j}),
		\]
		for all $j\geq j_0$. Hence
		\[
		v_0(x^*x)+O(\frac{1}{N_j})\geq 0,~~j\geq j_0.
		\]
		Letting $j\to\infty$, we have $v_0(x^*x)\geq0$, which ends the proof.
	\end{proof}
	
	As this proposition suggests, to show that an infinitely divisible state is of Poisson type, it is important to capture the corresponding idempotent state. For this we need two lemmas. The first one is an easy fact in matrix theory.
	
	\begin{lem}\label{lemma in matrix theory}
		Let $P\in\mathbb{M}_n(\mathbb{C})$ be a self-adjoint projection. Suppose $A,B\in\mathbb{M}_n(\mathbb{C})$ such that $A=AP=PA,AB=P$ and $\Vert A\Vert\leq 1$, $\Vert B\Vert\leq 1$. Then $A^*A=AA^*=P$. Consequently, if $u,v$ are states on a finite quantum group $\mathbb{G}$ such that $u=u\phi=\phi u$ and $uv=\phi$, where $\phi$ is an idempotent state on $\mathbb{G}$, then $u^*u=uu^*=\phi$.
	\end{lem}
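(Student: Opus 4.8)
The plan is to prove the linear-algebra statement first and then transfer it to the functional statement through the Fourier transform.

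\textbf{The matrix part.} Let $H_0$ denote the range of the projection $P$. The relations $A=AP=PA$ force $A=PAP$, so $A$ annihilates $H_0^{\perp}$ and restricts to an operator $a\in B(H_0)$ with $\|a\|\leq 1$. From $AB=P$ one gets, for every $\xi\in H_0$, that $\xi=P\xi=AB\xi=A(PB\xi)=a(PB\xi)$; here it is essential to pass to $PB$ rather than $B$, since $B$ need not leave $H_0$ invariant. Hence $a$ is surjective, so invertible, on the finite-dimensional space $H_0$. Next, for $\xi\in H_0$ put $\eta:=PB\xi=a^{-1}\xi$; since $\|B\|\leq 1$ we have $\|\eta\|\leq\|B\xi\|\leq\|\xi\|=\|a\eta\|\leq\|a\|\,\|\eta\|\leq\|\eta\|$, so $\|a\eta\|=\|\eta\|$, and as $\xi$ ranges over $H_0$ so does $\eta=a^{-1}\xi$; therefore $a$ is isometric, hence unitary on $H_0$, i.e.\ $a^{*}a=aa^{*}=I_{H_0}$. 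Finally, $A^{*}=PA^{*}P$, and a short computation (using that $A^{*}|_{H_0}=PA^{*}|_{H_0}=a^{*}$) shows that $A^{*}A$ and $AA^{*}$ vanish on $H_0^{\perp}$ and act as the identity on $H_0$; since $P$ is exactly the orthogonal projection onto $H_0$, this gives $A^{*}A=AA^{*}=P$.

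\textbf{The functional part.} I would apply the Fourier transform $\mathcal{F}$, which for a finite quantum group is a $*$-isomorphism of the Banach $*$-algebra $(A^{*},\star,*)$ onto the finite-dimensional $C^{*}$-algebra $\hat{A}=\bigoplus_{\alpha\in\Irr(\mathbb{G})}B(H_{\alpha})$ carrying convolution to componentwise multiplication. Since $\phi$ is idempotent, $\hat{\phi}$ is a self-adjoint projection; since $u$ and $v$ are states, they have norm $1$, so $\|\hat{u}\|\leq 1$ and $\|\hat{v}\|\leq 1$ by the contractivity of $\mathcal{F}$. The hypotheses $u=u\phi=\phi u$ and $uv=\phi$ translate to $\hat{u}=\hat{u}\hat{\phi}=\hat{\phi}\hat{u}$ and $\hat{u}\hat{v}=\hat{\phi}$ in $\hat{A}$. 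Reading these off in each block $B(H_{\alpha})$ and invoking the matrix part with $P=\hat{\phi}(\alpha)$, $A=\hat{u}(\alpha)$, $B=\hat{v}(\alpha)$ yields $\hat{u}(\alpha)^{*}\hat{u}(\alpha)=\hat{u}(\alpha)\hat{u}(\alpha)^{*}=\hat{\phi}(\alpha)$ for every $\alpha$, that is, $\widehat{u^{*}u}=\widehat{uu^{*}}=\hat{\phi}$. Since $\mathcal{F}$ is injective, $u^{*}u=uu^{*}=\phi$.

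The only mildly delicate point I anticipate is the bookkeeping inside the corner algebra $PM_n(\mathbb{C})P$ in the matrix part — in particular, remembering that $B$ need not preserve the range of $P$, so one must work with $PB$; once that is noticed, everything in both parts reduces to routine computation.
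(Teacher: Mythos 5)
Your proof is correct and follows essentially the same route as the paper: reduce to the corner $P\mathbb{M}_n(\mathbb{C})P$ where the compression of $A$ is invertible with contractive inverse, conclude it is unitary, and then transfer the statement to functionals via the Fourier transform, using that $\mathcal{F}$ is an injective contractive $*$-homomorphism and $\hat{\phi}$ is a self-adjoint projection. The only cosmetic difference is that you establish unitarity by a direct isometry argument on the range of $P$, whereas the paper writes $A,B$ in block form and forces all eigenvalues of $A_r^*A_r$ to equal $1$ from $\Vert A_r^*A_r\Vert=\Vert(A_r^*A_r)^{-1}\Vert=1$.
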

	
	\begin{proof}
		Since $P$ is a self-adjoint projection, we may assume without loss of generality that 
		\[
		P=\begin{pmatrix*}
		I_r&0\\
		0&0
		\end{pmatrix*},
		\]
		where $I_r$ is the identity in $\mathbb{M}_r(\mathbb{C})$ with $r=\text{rank}(P)$. From $A=AP=PA$ and $AB=P$ it follows 
		\[
		A=\begin{pmatrix*}
		A_r&0\\
		0&0
		\end{pmatrix*},~~
		B=\begin{pmatrix*}
		B_r&*\\
		*&*
		\end{pmatrix*},
		\]
		with $A_r B_r=I_r$. Note that
		\[
		1
		=\Vert I_r\Vert
		=\Vert A_r B_r\Vert
		\leq\Vert A_r\Vert\Vert B_r\Vert
		\leq\Vert A\Vert\Vert B\Vert\leq 1.
		\]
		So $\Vert A_r\Vert=\Vert B_r\Vert=1$. This is to say,
		\[
		\Vert A_r^*A_r\Vert=\Vert B_r B_r^*\Vert=\Vert (A_r^*A_r)^{-1}\Vert=1.
		\]
		Then all the eigenvalues of $A_r^*A_r$ must be $1$ and thus $A_r^*A_r=I_r$. Hence $B_r=A^*_r$  and thus $A^*A=AA^*=P$.
		
		To show the remaining part, note that the Fourier transforms of $u,v$ are in some full matrix algebra $\mathbb{M}_n(\mathbb{C})$. Since $\mathcal{F}$ is a contraction, we have
		\[
		\Vert\hat{u}\Vert\leq\Vert u\Vert=1,~~\Vert\hat{v}\Vert\leq\Vert v\Vert=1.
		\]
		From the fact that the Fourier transform $\mathcal{F}$ is a *-homomorphism it follows
		\[
		\hat{u}=\hat{u}\hat{\phi}=\hat{\phi}\hat{u},~~\hat{u}\hat{v}=\hat{\phi}.
		\]
		Because $\hat{\phi}$ is a self-adjoint  projection, the previous argument yields $\hat{u}^*\hat{u}=\hat{u}\hat{u}^*=\hat{\phi}$. Again, since $\mathcal{F}$ is a *-homomorphism, $\widehat{u^*u}=\widehat{uu^*}=\hat{\phi}$. Thus $u^*u=uu^*=\phi$, by the injectivity of $\mathcal{F}$.
	\end{proof}
	
	\begin{lem}\label{unitary implies root of identity}
		Let $\mathbb{H}=(A,\Delta)$ be a finite quantum hypergroup with the dual $\hat{\mathbb{H}}=(\hat{A},\hat{\Delta})$, which is also a finite quantum hypergroup. Suppose that $u$ is a state on $A$ such that $uu^*=u^*u=\epsilon$, where $\epsilon$ is the counit of $A$. Then $u$ is an $n$-th root of $\epsilon$ for some $n\leq\dim\hat{A}$. If, moreover, $\mathbb{H}$ is a finite quantum group, then $u$ is also a character.
	\end{lem}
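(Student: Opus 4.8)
The plan is to transport the problem to the dual $\hat{\mathbb H}=(\hat A,\hat\Delta)$. Since the counit $\epsilon$ of $A$ is the unit of the convolution algebra $A^{*}$, which (for $\mathbb H$ finite) is exactly the finite--dimensional $C^{*}$--algebra $\hat A$, the hypothesis ``$uu^{*}=u^{*}u=\epsilon$'' says precisely that $u$ is a \emph{unitary} element of $\hat A$; in particular $u$ is normal, with finite spectrum $\operatorname{spec}(u)=\{\lambda_{1},\dots,\lambda_{r}\}\subset\mathbb T$ and orthogonal spectral projections $p_{1},\dots,p_{r}$. All convolution powers $u^{k}$ ($k\ge 1$) are states on $A$; the closure $K:=\overline{\{u^{k}:k\ge 0\}}$ inside the (compact) unitary group of $\hat A$ is a closed sub\emph{semigroup}, hence (a standard compactness argument) a subgroup, all of whose elements are limits of positive powers $u^{k_{i}}$ and are therefore states on $A$ (the state space of the finite--dimensional $A$ is norm--closed). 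Each such state $v$ satisfies $\hat\epsilon(v)=v(1_{A})=u(1_{A})^{\cdots}=1$, and by Corollary~\ref{Corollay about the key inequality concerning positive definite elements} its trivial--representation coefficient obeys $0\le a^{\alpha_{0}}(v)=\hat h(v)\le\hat\epsilon(v)=1$, where $\hat h$ is the (faithful) Haar measure of $\hat{\mathbb H}$.

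The conclusion ``$u^{n}=\epsilon$ for some $n\le\dim\hat A$'' is then equivalent to: $K$ is finite with $|K|\le\dim\hat A$, equivalently $\operatorname{spec}(u)$ is a finite subgroup $\mu_{n}$ of $\mathbb T$ with $n\le\dim\hat A$ (so that $u^{n}=\sum_{l}\lambda_{l}^{n}p_{l}=\sum_{l}p_{l}=\epsilon$). One half of this I would get cheaply from faithfulness of $\hat h$: writing $\hat h(u^{k})=\sum_{l}t_{l}\lambda_{l}^{k}$ with $t_{l}=\hat h(p_{l})>0$, any Laurent relation $\sum_{k}c_{k}u^{k}=0$ yields, after applying the functionals $\hat h(\,\cdot\,u^{-m})$ and using linear independence of the sequences $k\mapsto\lambda_{l}^{k}$, that $\sum_{k}c_{k}\lambda_{l}^{k}=0$ for every $l$; hence the minimal polynomial $\prod_{l}(x-\lambda_{l})$ of $u$ divides $\sum_{k}c_{k}x^{k}$, so $u^{0},u^{1},\dots,u^{r-1}$ are linearly independent in $\hat A$ and $r\le\dim\hat A$. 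Thus the whole statement reduces to: \emph{$\operatorname{spec}(u)$ is a subgroup of $\mathbb T$} (then $\operatorname{spec}(u)=\langle\lambda_{1},\dots,\lambda_{r}\rangle=\mu_{n}$ with $n=|\mu_{n}|\le r\le\dim\hat A$, and $u^{n}=\epsilon$).

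Proving that $\operatorname{spec}(u)$ is a subgroup is the step I expect to be the main obstacle, and it is exactly where the full strength of ``$u^{k}$ is a \emph{positive} functional on $A$ for \emph{every} $k$'' must enter (not just $\hat h(u^{k})\in[0,1]$). The natural route: expand each state $u^{k}$ via Theorem~\ref{Peter-Weyl quantum hyper group} and Corollary~\ref{Corollay about the key inequality concerning positive definite elements} as $u^{k}=\sum_{\alpha}[a^{\alpha}(u^{k})]\,v^{\alpha}$ over the irreducible $^{\dagger}$--representations $v^{\alpha}$ of $\hat{\mathbb H}$, with each block $[a^{\alpha}(u^{k})]$ positive semidefinite and $\sum_{\alpha,i}a^{\alpha}_{ii}(u^{k})=\hat\epsilon(u^{k})=1$. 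Matching this against the spectral decomposition gives, for a fixed nontrivial $\alpha$ and a fixed vector $\xi$, a finite exponential sum $g(k):=\langle[a^{\alpha}(u^{k})]\xi,\xi\rangle=\sum_{l}\lambda_{l}^{k}\,\langle M^{\alpha}_{l}\xi,\xi\rangle$ which is $\ge 0$ for all $k\ge 1$ and which vanishes at $k=0$ (since $\epsilon=1_{\hat A}$ carries no $\alpha$--component for $\alpha\ne\alpha_{0}$, i.e.\ $\sum_{l}M^{\alpha}_{l}=0$). Pushing $g$ to the closure torus $T_{0}=\overline{\{(\lambda_{1}^{k},\dots,\lambda_{r}^{k}):k\}}\subseteq\mathbb T^{r}$, it becomes a nonnegative trigonometric polynomial on $T_{0}$ which vanishes at, hence is minimized at, the identity of $T_{0}$; analysing its Fourier expansion on $T_{0}$ (its mass can only sit on characters that are trivial on $T_{0}$) forces the ``non--torsion'' frequencies $\lambda_{l}$ out of the support of every $\langle M^{\alpha}_{l}\xi,\xi\rangle$. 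Running this over all $\alpha\ne\alpha_{0}$ and all $\xi$, and using the $\alpha_{0}$--normalisation $\sum_{l}t_{l}\lambda_{l}^{k}\in[0,1]$ with $t_{l}>0$, $\sum_{l}t_{l}=1$, should pin $\operatorname{spec}(u)$ down to a finite subgroup of $\mathbb T$.

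Finally, when $\mathbb H$ is a finite quantum group, to upgrade to ``$u$ is a character'' I would note that $u$, being a unitary, is an extreme point of the closed unit ball of $\hat A$, while every state $w$ on $A$ lies in that ball (its functional norm $w(1_{A})=1$ equals its $C^{*}$--norm in $\hat A$, as the Fourier transform is then an isometric $*$--isomorphism), so $u$ is an extreme point of the state space $\mathcal S(A)$ and its GNS representation $\pi_{u}$ is irreducible. Then the equality case in the Cauchy--Schwarz estimate $|u(ab)|=|\langle\pi_{u}(b)\xi_{u},\pi_{u}(a^{*})\xi_{u}\rangle|\le\|\pi_{u}(b)\xi_{u}\|\,\|\pi_{u}(a^{*})\xi_{u}\|$, forced by $uu^{*}=u^{*}u=\epsilon$, makes $\xi_{u}$ a joint eigenvector for $\pi_{u}(A)$, so $u$ is multiplicative, i.e.\ a $*$--character of $A$; in that case $\operatorname{spec}(u)$ is the image of a group character, hence automatically a subgroup, recovering Parthasarathy's picture and the bound.
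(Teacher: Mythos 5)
Your overall framework is sound (viewing $u$ as a unitary of $\hat A$, taking its spectral decomposition $u=\sum_l\lambda_lp_l$, and reducing the statement to ``the $\lambda_l$ are roots of unity, with a degree bound''), and it genuinely reorganises the problem compared with the paper, which never passes to the spectral decomposition. But the proof has a real gap exactly at the step you flag as the main obstacle, and the sketch you give there does not close it. A nonnegative trigonometric polynomial on a compact abelian group that vanishes at the identity need \emph{not} have its Fourier mass concentrated on trivial characters (on $\mathbb{T}$, $2-z-\bar z\ge 0$ vanishes at $z=1$), so the assertion ``its mass can only sit on characters that are trivial on $T_0$'' is false as stated, and the constraints you have actually extracted from positivity --- namely $\sum_l\lambda_l^kM^\alpha_l\ge 0$ for all $k\ge 1$ with $\sum_lM^\alpha_l=0$ for $\alpha\ne\alpha_0$, together with $\sum_lt_l\lambda_l^k\in[0,1]$, $t_l>0$, $\sum_lt_l=1$ --- are demonstrably consistent with non-torsion spectrum: take $\lambda=(1,e^{i\theta},e^{-i\theta})$ with $\theta/\pi$ irrational, $t=(\tfrac12,\tfrac14,\tfrac14)$ and scalar blocks $(2,-1,-1)$, giving $\tfrac12+\tfrac12\cos k\theta\ge0$ and $2-2\cos k\theta\ge0$. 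What rules such configurations out is the quantitative inequality $\hat h(w)\le\hat\epsilon(w)$ for \emph{positive} $w$ (Corollary \ref{Corollay about the key inequality concerning positive definite elements}) applied to $u^k(u^k)^*=\epsilon$ after splitting $u^k$ into its $\alpha_0$-component $p_{\alpha_0,k}\epsilon$ and the rest: this forces $1\le p_{\alpha_0,k}^2+(1-p_{\alpha_0,k})^2$, hence $\hat h(u^k)=p_{\alpha_0,k}\in\{0,1\}$ for every $k$, which is the engine of the paper's proof and is absent from yours. (Once you have that dichotomy, your harmonic analysis does work: $\sum_lt_l\chi_l$ is a $\{0,1\}$-valued continuous function on $T_0$ equal to $1$ at $e$, hence $\equiv1$ on the identity component, and faithfulness of $\hat h$, i.e.\ $t_l>0$, forces every $\lambda_l$ to be a root of unity.)

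Two further points. First, your bound $n\le\dim\hat A$ rests on the unproven claim that $\operatorname{spec}(u)$ \emph{equals} the cyclic group it generates; your linear-independence argument only bounds the number $r$ of distinct eigenvalues, not the order $n$ of $u$, and these can differ if the spectrum is a proper subset of $\mu_n$. The paper avoids this by combining the dichotomy $\hat h(u^k)\in\{0,1\}$ with Cayley--Hamilton and $\det\hat u\ne0$ to produce some $k\le\dim\hat A$ with $\hat h(u^k)=1$, i.e.\ $u^k=\epsilon$. Second, for the character statement in the finite quantum group case, your route (unitary $\Rightarrow$ extreme in the unit ball $\Rightarrow$ pure state, since the state space is a face of the dual unit ball) is plausible but leaves the decisive ``equality in Cauchy--Schwarz'' step unexplained --- it is not specified for which $a,b$ equality holds nor why $uu^*=\epsilon$ forces it; the paper instead gets this in one line from the orthogonality relations \eqref{Orthogonal relation for finite quantum groups} and the Cauchy--Schwarz inequality $\sum_{i,j}|a^\alpha_{ij}|^2\le(\Tr[a^\alpha])^2=1$ for positive semidefinite trace-one blocks.
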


	\begin{proof}
		Let $v^\alpha=[v^\alpha_{ij}]_{i,j=1}^{n_\alpha}$ be the representation corresponding to $\alpha\in\Irr(\hat{\mathbb{H}})$. By Corollary \ref{Corollay about the key inequality concerning positive definite elements} we can write $u$ as 
		\[
		u=\sum_{\alpha\in\Irr(\hat{\mathbb{H}})}p_\alpha\sum_{i,j=1}^{n_\alpha}a^\alpha_{ij}v^\alpha_{ij},
		\]
		where $p_\alpha\geq0$, $\sum_{\alpha\in\Irr(\hat{\mathbb{H}})}p_\alpha=1$ and $[a^\alpha_{ij}]_{i,j=1}^{n_\alpha}$ positive semi-definite with trace $1$. 
		Let $\alpha_0$ be the trivial representation, i.e., $v_{\alpha_0}=1$. Then we can write $u=p_{\alpha_0}1+v$ with $v=\sum_{\alpha\ne\alpha_0}p_\alpha\sum_{i,j=1}^{n_\alpha}a^\alpha_{ij}v^\alpha_{ij}$ such that $\Vert v\Vert=\sum_{\alpha\ne\alpha_0}p_\alpha=1-p_{\alpha_0}$, where $\Vert\cdot\Vert$ denotes the norm of $v$ as a functional. We have $uu^*=p^2_{\alpha_0}1+p_{\alpha_0}(v+v^*)+vv^*$.
		Then by Theorem \ref{Peter-Weyl quantum hyper group}, $\hat{h}(v)=\hat{h}(v^*)=0$. Since $vv^*$ is also a positive functional, from Corollary \ref{Corollay about the key inequality concerning positive definite elements} it follows
		\[
		\hat{h}(vv^*)\leq \hat{\epsilon}(vv^*)=|\hat{\epsilon}(v)|^2=(1-p_{\alpha_0})^2,
		\]
		where $\hat{h}$ and $\hat{\epsilon}$ are the Haar state and the counit on $\hat{\mathbb{H}}$, respectively.
		So we have 
		\[
		1=\hat{h}(uu^*)=p_{\alpha_0}^2+\hat{h}(vv^*)\leq p_{\alpha_0}^2+(1-p_{\alpha_0})^2=1-2p_{\alpha_0}+2p_{\alpha_0}^2,
		\]
		which yields $p_{\alpha_0}(p_{\alpha_0}-1)\geq 0$. Recall that $0\leq p_{\alpha_0}\leq 1$, hence either $p_{\alpha_0}=0$ or $p_{\alpha_0}=1$. That is to say, either $\hat{h}(u)=0$ or $u=\epsilon$. Since for any $n\geq1$, $u^n$ is again a state such that $u^n u^{*n}=\epsilon$, we obtain, by a similar argument, that either $\hat{h}(u^n)=0$ or $u^n=\epsilon$.
		
		If $u$ is not a $n$-th root of $\epsilon$ for all $1\leq n\leq\dim\hat{A}$, then we have 
		\[
		\hat{h}(u^n)=0,~~n=1,2,\dots,\dim \hat{A}.
		\]
		Set $P(\lambda):=\det(\lambda I_m-u)=\sum_{i=0}^{m}a_i \lambda^i$, then Cayley-Hamilton Theorem implies that $P(u)=0$, where $m\leq \dim \hat{A}$ is a positive integer and $I_m$ denotes the identity matrix in $\mathbb{M}_{m}(\mathbb{C})$. Since $u$ is unitary in $\mathbb{M}_{m}(\mathbb{C})$, we have $a_0=(-1)^m\det(u)\neq0$. But 
		\[
		a_0=a_0\hat{h}(I_m)+\sum_{i=1}^{m}a_i\hat{h}( u^i)=\hat{h}(P(u))=0,
		\]
		which leads to a contradiction. So we must have $u^m=\epsilon$ for some $1\leq m\leq \dim \hat{A}$.
		
		If moreover, $\mathbb{H}$ is a finite quantum group, then we can obtain a slightly stronger estimate. Indeed, by choosing $\{v^\alpha_{ij}\}$ to be unitary irreducible representations, we have from the orthogonal relation \eqref{Orthogonal relation for finite quantum groups} that
		\[
		1=\hat{h}(uu^*)
		=\sum_{\alpha\in\Irr(\hat{\mathbb{H}})}\frac{p^2_\alpha}{n_\alpha}\sum_{i,j=1}^{n_\alpha}|a^\alpha_{ij}|^2
		\leq\sum_{\alpha\in\Irr(\hat{\mathbb{H}})}\frac{p^2_\alpha}{n_\alpha}
		\leq\sum_{n_\alpha=1}p^2_\alpha+\sum_{n_\alpha\geq 2}\frac{p^2_\alpha}{2},
		\]
		where the first inequality holds because $[a^\alpha_{ij}]_{i,j=1}^{n_\alpha}$ is positive semi-definite with trace $1$.
		Recall that $p_\alpha\geq0$ and $\sum_{\alpha\in\Irr(\hat{\mathbb{H}})}p_\alpha=1$, thus
		\[
		1\leq\sum_{n_\alpha=1}p^2_\alpha+\sum_{n_\alpha\geq 2}\frac{p^2_\alpha}{2}
		\leq\sum_{n_\alpha=1}p^2_\alpha+\sum_{n_\alpha\geq 2}p^2_\alpha
		\leq\sum_{\alpha\in\Irr(\hat{\mathbb{H}})}p_\alpha
		=1.
		\]
		This happens only if $p_{\alpha'}=1$ and $n_{\alpha'}=1$ for some $\alpha'\in\Irr(\hat{\mathbb{H}})$. That is to say, $u=v^{\alpha'}$ is a one dimensional unitary representation of $\hat{\mathbb{H}}$, thus a character.
	\end{proof}
	
	The following proposition, gathering the main ingredients of preceding lemmas, will be used to prove Theorem \ref{main thm}.
	
	\begin{prop}\label{prop before the final thm}
		Let $\mathbb{G}=(A,\Delta)$ be a finite quantum group with the counit $\epsilon$. Suppose that $u,v\in\mathcal{S}(A)$ and $\phi\in\text{Idem}(\mathbb{G})$ are such that  $u=u\phi=\phi u$ and $uv=\phi$. Then there exists a positive integer $m\leq \dim \hat{A}$ such that $u^m=\phi$.
	\end{prop}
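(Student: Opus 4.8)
The plan is to reduce to Lemma~\ref{unitary implies root of identity}, applied to the finite quantum hypergroup $(A_{\phi},\Delta_{\phi})$ induced by $\phi$. The first step is merely to record that $u$ is unitary relative to $\phi$: the second assertion of Lemma~\ref{lemma in matrix theory} applies verbatim to the states $u,v$ and the idempotent state $\phi$, and yields $u^*u=uu^*=\phi$.

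Next I would transfer the picture to $A_{\phi}$. Since $u$ is $\phi$-bi-invariant, Lemma~\ref{bi-invariance} gives $u=u|_{A_{\phi}}\mathbb{E}_{\phi}$; put $u_0:=u|_{A_{\phi}}$. By Lemma~\ref{bi-invariance}(2), $u_0$ is a state on $A_{\phi}$. Using parts (3) and (4) of Lemma~\ref{bi-invariance} to restrict the identity $u^*u=uu^*=\phi$, and noting that $\phi|_{A_{\phi}}=\epsilon|_{A_{\phi}}$ (this is Lemma~\ref{bi-invariance}(5) read on $A_{\phi}$, where $\mathbb{E}_{\phi}$ acts as the identity), one obtains $u_0u_0^*=u_0^*u_0=\epsilon|_{A_{\phi}}$, where the product and the involution are those of the dual of $(A_{\phi},\Delta_{\phi})$. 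Now $\epsilon|_{A_{\phi}}$ is the counit of the finite quantum hypergroup $(A_{\phi},\Delta_{\phi})$, whose dual, by the duality theorem of Section~2, is again a finite quantum hypergroup; hence Lemma~\ref{unitary implies root of identity} produces a positive integer $m\le\dim\widehat{A_{\phi}}$ with $u_0^{m}=\epsilon|_{A_{\phi}}$, the power taken with respect to the convolution on $A_{\phi}$.

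Finally I would push this back to $A$. Convolution preserves $\phi$-bi-invariance, so $u^{m}$ is $\phi$-bi-invariant, and an iteration of Lemma~\ref{bi-invariance}(4) shows that the restriction of $u^{m}$ to $A_{\phi}$ is the $m$-th convolution power $u_0^{m}$ computed inside $A_{\phi}$. Therefore, by Lemma~\ref{bi-invariance} once more,
\[
u^{m}=(u^{m})|_{A_{\phi}}\,\mathbb{E}_{\phi}=u_0^{m}\,\mathbb{E}_{\phi}=\epsilon|_{A_{\phi}}\,\mathbb{E}_{\phi}=\phi.
\]
For the bound, $A_{\phi}$ is a unital $C^*$-subalgebra of $A$, so $\dim\widehat{A_{\phi}}=\dim A_{\phi}\le\dim A=\dim\hat A$, whence $m\le\dim\hat A$.

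I do not expect a deep obstacle: all the substantial content sits in Lemmas~\ref{lemma in matrix theory} and~\ref{unitary implies root of identity}, and what remains is bookkeeping with the restriction maps, supplied by Lemma~\ref{bi-invariance}. The one point that needs genuine care is the identity $u_0u_0^*=u_0^*u_0=\epsilon|_{A_{\phi}}$: one must check that $u^*u=uu^*=\phi$ survives restriction to $A_{\phi}$ as an equality of \emph{convolution} products (not merely of linear functionals), which is exactly what parts (3) and~(4) of Lemma~\ref{bi-invariance} guarantee.
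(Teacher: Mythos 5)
Your proposal is correct and follows essentially the same route as the paper's proof: apply Lemma \ref{lemma in matrix theory} to get $u^*u=uu^*=\phi$, restrict to the finite quantum hypergroup $(A_{\phi},\Delta_{\phi})$ via Lemma \ref{bi-invariance}, invoke the duality theorem and Lemma \ref{unitary implies root of identity} to obtain $u_0^m=\epsilon|_{A_{\phi}}$ with $m\leq\dim\widehat{A_{\phi}}\leq\dim\hat{A}$, and push back to $A$ with $\mathbb{E}_{\phi}$. Your extra care about the restriction respecting convolution products and involutions is exactly the bookkeeping the paper leaves implicit.
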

	
	\begin{proof}
		From Lemma \ref{lemma in matrix theory} it follows $u^*u=uu^*=\phi$. Let $u_0$ and $\epsilon_0$ be the restrictions of $u$ and $\epsilon$ to $A_{\phi}$, respectively. Then $u_0$ is a state on finite quantum hypergroup $\mathbb{H}:=(A_{\phi},\Delta_{\phi})$, and $\epsilon_0$ is the counit on $\mathbb{H}$ such that $u_0 u^*_0=\epsilon_0$. Note that by Theorem \ref{Duality induced by an idempotent state}, the dual of $\mathbb{H}$ is $(\hat{A}_p,\hat{\Delta}_p)$, which is again a finite quantum hypergroup, where $p=\phi$ is considered as a group-like projection in $\hat{A}$.  So Lemma \ref{unitary implies root of identity} implies $u^m_0=\epsilon_0$ for some $m\leq\dim\widehat{A_\phi}\leq\dim\hat{A}$. Hence $u^m=u^m_0\mathbb{E}_\phi=\phi$. 
	\end{proof}
	
	Now we are ready to prove the main result of this paper.
	
	\begin{thm}\label{main thm}
		Let $\mathbb{G}=(A,\Delta)$ be a finite quantum group. Then $\mathcal{P}(\mathbb{G})=\mathcal{I}(\mathbb{G})$. 
	\end{thm}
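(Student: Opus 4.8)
The plan is to establish the two inclusions separately. The inclusion $\mathcal{P}(\mathbb{G})\subseteq\mathcal{I}(\mathbb{G})$ is immediate: if $\omega=\exp_\phi(u)$ with $u\in\mathcal{N}_\phi(\mathbb{G})$, then $u/n\in\mathcal{N}_\phi(\mathbb{G})$ too, so $\exp_\phi(u/n)$ is a state and $\omega=\exp_\phi(u/n)^n$; hence $\omega\in\mathcal{I}(\mathbb{G})$. The substance is the reverse inclusion, and the whole strategy is to reduce it to Proposition \ref{proposition for the first proof}: given $\omega\in\mathcal{I}(\mathbb{G})$, I must produce an idempotent state $\phi$ and a chain of roots $\{\omega_{m_j}\}$ of $\omega$ verifying conditions (1)--(4) there.

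First I would build a coherent system of roots. For each $n$ the set $R_n:=\{v\in\mathcal{S}(A):v^n=\omega\}$ is nonempty (infinite divisibility) and compact ($A$ is finite-dimensional), and $v\mapsto v^{n/d}$ sends $R_n$ into $R_d$ whenever $d\mid n$; so the inverse limit over $(\mathbb{N},\mid)$ is nonempty, and a choice $(v_n)_n$ in it gives states with $v_n^n=\omega$, $v_{kn}^k=v_n$, and all $v_n$ pairwise commuting (being powers of $v_{nm}$). Passing to the Fourier picture, the $\hat{v}_n$ are commuting contractions in the finite-dimensional $C^*$-algebra $\hat{A}=\bigoplus_\alpha B(H_\alpha)$ with $\hat{v}_n^{\,n}=\hat\omega$. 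Working blockwise and splitting each $H_\alpha$ as $H_\alpha^0\oplus H_\alpha^1$, where $H_\alpha^0$ is the generalized kernel and $H_\alpha^1=\mathrm{ran}(\hat\omega^N)$ ($N$ large), one checks that every $\hat{v}_n$ vanishes on $H^0$ (a nilpotent root of a fixed nilpotent operator on a $d$-dimensional space is $0$ once the exponent exceeds $d$) and restricts to a uniformly invertible contraction on $H^1$. Taking a cluster point $\psi$ of $\{v_{n!}\}_n$, the contraction $\hat\psi$ acts unitarily on (its part of) $H^1$ and kills $H^0$; being a norm-one contraction this forces $H^0\perp H^1$, so $\hat\psi=0\oplus V$ with $V$ unitary. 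I would then set $\phi:=\psi\star\psi^*$. Since $\psi^*$ is again a state and $\hat\phi=0\oplus\mathbf{1}_{H^1}$ is a self-adjoint projection, $\phi$ is an idempotent state; moreover $\hat{v}_n\hat\phi=\hat\phi\hat{v}_n=\hat{v}_n$ and $\hat\omega\hat\phi=\hat\phi\hat\omega=\hat\omega$, i.e.\ every $v_n$, and $\omega$ itself, is $\phi$-bi-invariant. Restricting to the finite quantum hypergroup $(A_\phi,\Delta_\phi)$ and invoking Theorem \ref{Duality induced by an idempotent state} together with Lemma \ref{lemma in matrix theory}, Lemma \ref{unitary implies root of identity} and Proposition \ref{prop before the final thm}, one sees that every cluster point of $\{v_{n!}\}$ is a root of $\phi$ of order at most $\dim\hat{A}$; such elements form a finite group inside the unitary group of $\widehat{A_\phi}$, and this rigidity lets one extract a strictly increasing chain $m_j$ (with $m_j\mid m_{j+1}$) along which $v_{m_j}\to\phi$.

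With $\phi$ and the chain $\{v_{m_j}\}$ in hand, conditions (1)--(4) of Proposition \ref{proposition for the first proof} are satisfied: strict monotonicity and the relations $v_{m_j}=v_{m_{j+1}}^{\,m_{j+1}/m_j}$ follow from coherence, while $\phi$-bi-invariance and $v_{m_j}\to\phi$ were obtained above. Hence $\omega\in\mathcal{P}_\phi(\mathbb{G})\subseteq\mathcal{P}(\mathbb{G})$, which together with the first paragraph gives $\mathcal{P}(\mathbb{G})=\mathcal{I}(\mathbb{G})$.

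I expect the main obstacle to be exactly the last point of the second paragraph: controlling the ``peripheral'' part $H^1$, that is, upgrading ``the roots cluster near a coset of unitaries'' to ``the roots converge to the idempotent $\phi$''. The strictly-contractive part of $\hat\omega$ is harmless (it simply dies under taking high roots), but the unitary part carries torsion; pinning this down is precisely where the periodicity statements (Lemma \ref{unitary implies root of identity}, Proposition \ref{prop before the final thm}) and the Jordan-type decomposition (Corollary \ref{Jordan decomposition}, via Lemma \ref{characterization of positive functional}) are indispensable.
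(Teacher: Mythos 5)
Your overall architecture coincides with the paper's first proof: build a divisible chain of roots by compactness, locate the idempotent state $\phi$ to which the roots accumulate, and feed everything into Proposition \ref{proposition for the first proof}. The easy inclusion and the inverse-limit construction of a coherent commuting family $(v_n)$ are fine. The genuine gap is in your middle paragraph, where you manufacture $\phi$ from the Fitting decomposition $H_\alpha=H_\alpha^0\oplus H_\alpha^1$ of $\hat\omega$. First, your parenthetical justification is backwards: if $T$ is nilpotent on a $d$-dimensional space then $T^d=0$, so from $\hat v_n^{\,n}=\hat\omega$ one deduces $\hat\omega|_{H^0}=0$ for $n\geq d$, \emph{not} $\hat v_n|_{H^0}=0$; the restriction $\hat v_n|_{H^0}$ is merely nilpotent and may be nonzero, and the same is true of a cluster point $\hat\psi$. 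Consequently $\hat\psi\hat\psi^*$ need not be a projection and your $\phi=\psi\star\psi^*$ need not be idempotent. Second, $H^0\oplus H^1$ is a priori only a direct sum, not an orthogonal one; your step ``being a norm-one contraction this forces $H^0\perp H^1$'' is valid only \emph{after} you know $\hat\psi|_{H^0}=0$, which is exactly the unproved point. Third, even granting orthogonality and $\hat\phi=0\oplus 1_{H^1}$, the asserted identity $\hat v_n\hat\phi=\hat\phi\hat v_n=\hat v_n$ fails in general because $\hat v_n|_{H^0}\neq 0$: the individual roots $v_n$ are \emph{not} $\phi$-bi-invariant (only $\omega$ and sufficiently high powers of each $v_n$ are), so condition (2) of Proposition \ref{proposition for the first proof} is not verified.

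These defects are repairable --- replace $\psi$ by $\psi^{\dim H^0}$ to kill the nilpotent block before forming $\phi$, and replace each $v_n$ by $v_n\star\phi$, which commutes with $\phi$ once orthogonality is established and is still a root of $\omega=\omega\star\phi$ --- but the repairs must actually be carried out, and the same goes for your last step. ``Such elements form a finite group \dots\ this rigidity lets one extract a chain along which $v_{m_j}\to\phi$'' is not an argument: on a finite quantum hypergroup Lemma \ref{unitary implies root of identity} gives torsion, not finiteness of the set of states $u$ with $u\star u^*=\phi$, and even a finite cluster set does not by itself put $\phi$ among the cluster points of your chain. The mechanism that makes this work, and that the paper uses, is to fix $N=(\dim\hat A)!$, arrange the chain so that consecutive terms satisfy $b_{j-1}=b_j^{\,N}$, note that every cluster point $c$ satisfies $c^m=\phi$ with $m\leq\dim\hat A$ and hence $c^N=\phi$, and then take $N$-th powers of a convergent subsequence: this lands back inside the chain and yields roots converging to $\phi$. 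Note also that the paper obtains $c=c\star d=d\star c$ and $d^2=d$ by purely convolution-algebraic identities, which is precisely how it avoids the non-orthogonality issues that your spectral approach runs into.
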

	
	\begin{proof}[The first proof]
		$\mathcal{P}(\mathbb{G})\subset\mathcal{I}(\mathbb{G})$ is clear. Let $\omega\in\mathcal{I}(\mathbb{G})$. We claim that for any positive integer $N\geq 2$, there exists a sequence $\{b_n\}_{n\geq 0}$ of roots of $\omega$ such that $b_0=\omega,b_{n-1}=b^N_n,n\geq1$. Indeed, since $A$ is finite dimensional, the set of states $Z=\mathcal{S}(A)$ is compact with respect to the norm topology. Thus $\prod_{j\geq 0}Z_j$, where $Z_j=Z$ for all $j$, is compact with respect to the product topology. Let $a_n\in Z$ be any $n$-th root of $\omega$ for all $n\geq 0$. Then the sequence of non-empty closed sets
		\[
		W_k:=\overline{\bigcup}_{j\geq k}\{a^{N^j}_{N^j}\}\times\{a^{N^{j-1}}_{N^j}\}\times\cdots\times\left\{a_{N^j}\right\}\times\prod_{i\geq j}Z_i,~~k\geq 1,
		\]
		is decreasing: $W_1\supset W_2\supset\cdots$, and thus any finite intersection of $\{W_k\}_{k\geq1}$ is non-empty. By compactness of $\prod_{j\geq 0}Z_j$, $\bigcap_{k\geq 1}W_k\neq\emptyset$. Hence one can choose $(b_0,b_1,\dots)\in\bigcap_{k\geq 1}W_k$, which verifies
		\[
		b_0=\omega,~~b_{n-1}=b^{N}_n,~~n\geq 1.
		\]
		This proves the claim.
		
		Choose $N=(\dim \hat{A})!\geq 2$ and let $\{b_n\}_{n\geq 0}$ be as above. Since $Z$ is compact, there exists a subsequence $\{c_j\}_{j\geq0}$ of $\{b_i\}_{i\geq0}$ such that $c_j$ converges to some $c\in Z$. If we fix a non-negative integer $i$, we have $b_i=c^{r_j}_j$ for sufficient large $j$ and some integer $r_j\geq N\geq 2$. That is,
		\begin{equation}\label{relation 1 of roots}
		b_i=c_j c^{r_j-1}_j=c^{r_j-1}_j c_j.
		\end{equation}
		We can assume that $c^{r_j-1}_j$ converges to some $d_i\in Z$, otherwise consider some subsequence, since  $\{c^{r_j-1}_j\}_{j\geq0}\subset Z$. Thus letting $j\to\infty$ in \eqref{relation 1 of roots}, we have
		\begin{equation}\label{relation 2 of roots}
		b_i=c d_i=d_i c,~~i\geq 0.
		\end{equation}
		This implies $b_i\in cZ\cap Zc$ for all $i\geq0$. From the choice of $c_j$ we have $c_j\in cZ\cap Zc$ for all $j\geq0$. Then for any $i$ the corresponding $c^{r_j-1}_j\in cZ\cap Zc$ for all $j$, which implies that $d_i\in cZ\cap Zc$ by the compactness of $cZ\cap Zc$. Now considering \eqref{relation 1 of roots} for $\{c_j\}_{j\geq0}$, instead of $\{b_i\}_{i\geq0}$, we obtain an updated version of \eqref{relation 2 of roots}:
		\begin{equation}\label{relation for the idempotent state}
		c_j=c d'_j=d'_j c,~~j\geq 0,
		\end{equation}
		where $d'_j\in cZ\cap Zc$. Letting $j\to\infty$, consider the subsequence of $\{d'_j\}_{j\geq0}$ if necessary, one obtains
		\begin{equation}\label{relation 3 of roots}
		c=cd=dc,
		\end{equation}
		where $d\in cZ\cap Zc$ by the compactness of $cZ\cap Zc$. Suppose $d=ce$ for some $e\in Z$, then $d^2=dce=ce=d$, i.e., $d$ is an idempotent state. By Proposition \ref{prop before the final thm}, we obtain $c^m=d$ for some $m\leq\dim \hat{A}$. 
		Then by choosing $N$ to be $(\dim \hat{A})!$, we have 
		\[
		c^N_j\to c^N=(c^m)^{\frac{N}{m}}=d, \text{ as }j\to\infty.
		\]
		Denote by $\phi$ the idempotent state $d$. Set $\omega_0:=\omega$ and $\omega_n:=c_n^N$ for all $n\geq1$. Then $\omega_n\to\phi$ as $n$ tends to $\infty$.
		By definition, $\{\omega_n\}_{n\geq0}$ is a subsequence of $\{b_j\}_{j\geq0}$, thus $\omega_{n-1}=\omega_n^{s_n}$ with $s_n=N r_n$ for all $n\geq 1$. Moreover, from \eqref{relation for the idempotent state} we have 
		\[
		\omega_n=c^N_n=(c d'_n)^N=c^N d_n^{'N}=\phi d_n^{'N}=\phi(\phi d_n^{'N})=\phi\omega_n,~~n\geq0.
		\]
		Similarly, $\omega_n=\omega_n\phi,n\geq0.$ Hence $\{\omega_n\}_{n\geq0}$ verifies the conditions of Proposition \ref{proposition for the first proof}, and consequently $\omega\in\mathcal{P}_\phi(\mathbb{G})$.
	\end{proof}
	
	Before giving the second proof, we introduce the following proposition, which could be formulated and proved  for a general Banach algebra.
	
	\begin{prop}\label{prop for second proof}
		Let $\mathbb{G}=(A,\Delta)$ be a compact quantum group, with $A$ separable. Let $\omega$ be a infinitely divisible state on $\mathbb{G}$. Suppose that there exist an idempotent state $\phi$ and a sequence of $\phi$-bi-invariant roots $\{\omega_{n_k}\}_{k\geq1}$ of $\omega$, where $\{n_k\}_{k\geq 0}$ is an increasing sequence of positive integers, such that $\omega_{n_k}^{n_k}=\omega$ for all $k\geq 1$, and
		\begin{equation}\label{decay condition}
		\sup_{k\geq1}n_k\Vert \omega_{n_k}-\phi\Vert=M<\infty,
		\end{equation}
		then $\omega\in\mathcal{P}_\phi(\mathbb{G})$.
	\end{prop}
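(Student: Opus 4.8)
The plan is to transplant Parthasarathy's Poissonization argument to the convolution Banach algebra $(A^*,\star)$; the decay condition \eqref{decay condition} is exactly what turns the roots $\omega_{n_k}$ into approximate ``infinitesimal generators''. First I would set $u_k:=n_k(\omega_{n_k}-\phi)$. Then \eqref{decay condition} says precisely that $\|u_k\|\leq M$ for all $k$, so $\{u_k\}$ lies in a fixed ball of $A^*$; since $A$ is separable this ball is weak*-metrizable and hence weak*-sequentially compact, so after passing to a subsequence I may assume $u_k\to u$ in the weak* topology for some $u\in A^*$ with $\|u\|\leq M$ (the indices $n_k$ being strictly increasing, we also have $n_k\to\infty$). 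While setting this up I would record three elementary facts: $u_k(1)=n_k(1-1)=0$ because $\omega_{n_k}$ and $\phi$ are unital; $u_k\phi=\phi u_k=u_k$ because $\omega_{n_k}$ is $\phi$-bi-invariant and $\phi$ is idempotent; and $u_k(x^*x)=n_k\,\omega_{n_k}(x^*x)\geq0$ whenever $\phi(x^*x)=0$, since then $\omega_{n_k}(x^*x)=\phi(x^*x)+u_k(x^*x)/n_k=u_k(x^*x)/n_k$ and $\omega_{n_k}$ is a state.

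Next I would check that these properties pass to the limit $u$, so that $u\in\mathcal{N}_\phi(\mathbb{G})$. That $u(1)=0$ and $u(x^*x)\geq0$ for $\phi(x^*x)=0$ is immediate from weak* convergence. For the $\phi$-bi-invariance, note that $(u_k\phi)(a)=u_k\big((\iota\otimes\phi)\Delta(a)\big)$, and the element $(\iota\otimes\phi)\Delta(a)\in A$ does not depend on $k$, so $(u_k\phi)(a)\to(u\phi)(a)$; on the other hand $(u_k\phi)(a)=u_k(a)\to u(a)$, whence $u\phi=u$, and symmetrically $\phi u=u$.

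Then comes the identification of $\omega$. Since $\phi$ is idempotent and each $u_k$ is $\phi$-bi-invariant, $\phi$ acts as a unit on $u_k$ in $(A^*,\star)$, so the binomial theorem gives
\[
\omega \;=\; \omega_{n_k}^{\,n_k} \;=\; \Big(\phi+\tfrac{1}{n_k}u_k\Big)^{\star n_k} \;=\; \phi+\sum_{j=1}^{n_k} c_{j,k}\,\frac{u_k^{\star j}}{j!},\qquad c_{j,k}:=\prod_{i=0}^{j-1}\Big(1-\tfrac{i}{n_k}\Big)\in[0,1].
\]
I would now evaluate at a fixed $a\in A$ and let $k\to\infty$. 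Three ingredients feed in: $c_{j,k}\to1$ for each fixed $j$; the $j$-fold convolution is weak*-continuous on norm-bounded sets, so $u_k^{\star j}(a)\to u^{\star j}(a)$ for each fixed $j$ --- this is seen by approximating the iterated coproduct $\Delta^{(j-1)}(a)\in A^{\otimes j}$ in norm by a finite sum of elementary tensors, the error being controlled uniformly in $k$ by $\|u_k\|^j\leq M^j$; and the tail is uniformly small, $\sum_{j>J}\|c_{j,k}u_k^{\star j}/j!\|\leq\sum_{j>J}M^j/j!$, regardless of $k$. Interchanging the limit with the sum then yields $\omega(a)=\phi(a)+\sum_{j\geq1}u^{\star j}(a)/j!=\exp_\phi(u)(a)$, i.e.\ $\omega=\exp_\phi(u)$ with $u\in\mathcal{N}_\phi(\mathbb{G})$, so $\omega\in\mathcal{P}_\phi(\mathbb{G})$.

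The hard part will be precisely this interchange of limits: it requires simultaneously a uniform-in-$k$ tail estimate for the exponential series --- which is exactly what \eqref{decay condition} buys --- and the weak*-continuity of convolution powers on bounded sets, the separability of $A$ entering only through the extraction of the weak*-convergent subsequence. Everything else, namely the behaviour of $u_k$ under the idempotent $\phi$, the binomial identity, and the verification that $u\in\mathcal{N}_\phi(\mathbb{G})$, is routine.
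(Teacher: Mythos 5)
Your proposal is correct and follows essentially the same route as the paper: extract a weak\textsuperscript{*}-convergent subsequence of $u_k=n_k(\omega_{n_k}-\phi)$ using the bound $M$ and separability, check that the limit $u$ lies in $\mathcal{N}_\phi(\mathbb{G})$, and identify $\omega$ with $\exp_\phi(u)$. The only (cosmetic) divergence is in the last step: you pass to the limit term by term in the binomial expansion of $\omega_{n_k}^{n_k}$ with a dominated-convergence argument, making explicit the weak\textsuperscript{*}-continuity of convolution powers on bounded sets, whereas the paper instead proves the norm estimate $\Vert\omega-\exp_\phi(u_k)\Vert\leq(1+\Vert\sum_{m\geq2}(\omega_{n_k}-\phi)^m/m!\Vert)^{n_k}-1\to0$ and leaves that continuity implicit.
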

	
	\begin{proof}
		Recall that $\{\varphi\in A^*:\Vert\varphi\Vert\leq M\}$ is compact with respect to weak* topology for each $M>0$. Then from \eqref{decay condition} we have for some subsequence of $\{n_k\}_{k\geq1}$, still denoted by $\{n_k\}_{k\geq1}$, that $n_k(\omega_{n_k}-\phi)$ converges to an element $u\in A^*$ with respect to the weak$^*$ topology. Then $u=\lim\limits_{k\to\infty}n_k(\omega_{n_k}-\phi)\in\mathcal{N}_\phi(\mathbb{G})$ and $\exp_\phi(u)$ is a Poisson state. It suffices to show that $\omega=\exp_\phi(u)$. Set 
		\[
		u_{n_k}:=\sum_{m\geq2}\frac{(\omega_{n_k}-\phi)^m}{m!},~~k\geq 1.
		\]
		It is well-defined, since $\sup_{k\geq 1}\Vert \omega_{n_k}-\phi\Vert<\infty$. Moreover, from \eqref{decay condition} it follows
		\[
		\Vert u_{n_k}\Vert\leq\sum_{m\geq2}\frac{1}{m!}\left(\frac{M}{n_k}\right)^m\leq\frac{1}{n_k^2}\sum_{m\geq2}\frac{M^m}{m!},
		\]
		whence
		\[
		\lim\limits_{k\to\infty}n_k\Vert u_{n_k}\Vert=0,
		\]
		and 
		\[
		\lim\limits_{k\to\infty}\left(1+\Vert u_{n_k}\Vert\right)^{n_k}=\lim\limits_{k\to\infty}\left(1+\Vert u_{n_k}\Vert\right)^{\frac{1}{\Vert u_{n_k}\Vert}\cdot n_k\Vert u_{n_k}\Vert}=1.
		\]
		Hence
		\begin{align*}
		\Vert \omega-\exp_\phi(n_k(\omega_{n_k}-\phi))\Vert
		&=\Vert \omega_{n_k}^{n_k}-(\omega_{n_k}+u_{n_k})^{n_k}\Vert\\
		&\leq\sum_{m=1}^{n_k}\binom{n_k}{m}\Vert \omega_{n_k}^{n_k-m}u_{n_k}^m\Vert\\
		&\leq\sum_{m=1}^{n_k}\binom{n_k}{m}\Vert u_{n_k}\Vert^m
		=\left(1+\Vert u_{n_k}\Vert\right)^{n_k}-1,
		\end{align*}
		which tends to $0$ as $k\to\infty$. This shows $\omega=\exp_\phi(u)$ and finishes the proof.
	\end{proof}	
	
	\begin{proof}[The second proof of Theorem \ref{main thm}]
		Again, $\mathcal{P}(\mathbb{G})\subset\mathcal{I}(\mathbb{G})$ is clear. Let $\omega\in\mathcal{I}(\mathbb{G})$. From the first proof we know that there exist an idempotent state $\phi\in\text{Idem}(\mathbb{G})$ and a sequence of roots $\{\omega_{n_k}\}_{k\geq0}\subset\mathcal{S}(A)$ with $\{n_k\}_{k\geq 0}$ an increasing sequence of positive integers such that 
		\[
		\omega_{n_k}^{n_k}=\omega,~~ \omega_{n_k}=\omega_{n_k}\phi=\phi\omega_{n_k},~~k\geq 0,
		\]
		and $\omega_{n_k}\to\phi$ as $k\to\infty$. Let $u$ and $u_{n_k}$ be the restrictions of $\omega$ and $\omega_{n_k}$ to $A_{\phi}$ for all $k\geq0$, respectively. Then from Lemma \ref{bi-invariance} $u$ is a state on finite quantum hypergroup $\mathbb{H}=(A_{\phi},\Delta_{\phi})$ such that $\{u_{n_k}\}_{k\geq0}$ is a sequence of roots of $u$ in $\mathcal{S}(A_{\phi})$ verifying
		\[
		u_{n_k}^{n_k}=u \text{ and }u_{n_k}\to\epsilon_0,~~ k\to\infty,
		\]
		where $\epsilon_0$ is the counit of $\mathbb{H}$. Now we repeat a calculation in Lemma \ref{unitary implies root of identity}. Let $\Irr(\hat{\mathbb{H}})$ be the set of unitary equivalent classes of irreducible unitary representations of $\hat{\mathbb{H}}$. Let $v^\alpha_{ij}$ be the matrix elements corresponding to the representation. Then we can write $u_{n_k}$ as 
		\[
		u_{n_k}=\sum_{\alpha\in\Irr(\hat{\mathbb{H}})}p_{\alpha,k}\sum_{i,j=1}^{n_\alpha}a^{\alpha,k}_{ij}v^\alpha_{ij}
		\] 
		with $p_{\alpha,k}\geq0$, $\sum_{\alpha\in\Irr(\hat{\mathbb{H}})}p_{\alpha,k}=1$ and $[a^\alpha_{ij}]_{i,j=1}^{n_\alpha}$ and $[a^{\alpha,k}_{ij}]_{i,j=1}^{n_\alpha}$ positive semi-definite with trace $1$ for each $k$.
		
		Denote by $\alpha_0$ the trivial representation in $\Irr(\hat{\mathbb{H}})$, so that $\epsilon_0=v^{\alpha_0}$. By Corollary \ref{Jordan decomposition} and the assumption,
		\begin{equation}\label{distance to counit}
		\Vert u_{n_k}-\epsilon_0\Vert = 2(1-p_{\alpha_0,k})\to 0,~~k\to\infty.
		\end{equation}
		So $p_{\alpha_0,k}\to 1$ as $k\to\infty$. 
		
		Let $m:=\text{rank}(\hat{\epsilon_0})$. Then $\widehat{A_{\phi}}$ can be viewed as a subalgebra of $\mathbb{M}_m(\mathbb{C})$. We use $\Vert\cdot\Vert_p$ to denote the Schatten $p$-norm of matrices. By H\"older's inequality,
		\begin{equation}\label{Holder}
		\Vert \hat{u}\Vert_{2/n_k}=\Vert \hat{u_{n_k}}^{n_k}\Vert_{2/n_k}\leq\Vert\hat{u_{n_k}}\Vert_{2}^{n_k},~~k\geq 0.
		\end{equation}
		
		Let $\lambda_1,\dots,\lambda_m$ be all singular values of $\hat{u}$. Then all $\lambda_i$ are non-zero. To see this, it suffices to show that $\hat{u}$ is invertible. Note that $\hat{\epsilon_0}$ is the identity matrix in $\mathbb{M}_m(\mathbb{C})$. Since for large $k$ there holds
		$$\Vert \hat{u_{n_k}}-\hat{\epsilon_0}\Vert\leq \Vert u_{n_k}-\epsilon_0\Vert<1,$$ 
		we have that $\hat{u_{n_k}}$ is invertible for large $k$, and so is $\hat{u}$. 
		
		Following a similar calculation to that in  Lemma \ref{unitary implies root of identity}, Theorem \ref{Peter-Weyl quantum hyper group} and Corollary \eqref{key inequality concerning positive definite elements} imply
		\[
		\hat{h}(u_{n_k}u_{n_k}^*)
		=p_{\alpha_0,k}^2+\hat{h}(v_{n_k}v_{n_k}^*)
		\leq p_{\alpha_0,k}^2+\hat{\epsilon}(v_{n_k}v_{n_k}^*)
		=p_{\alpha_0,k}^2+(1-p_{\alpha_0,k})^2,
		\]
		where $v_{n_k}=u_{n_k}-p_{\alpha_0,k}\epsilon_0=\sum_{\alpha\ne\alpha_0}p_{\alpha,k}\sum_{i,j=1}^{n_\alpha}a^{\alpha,k}_{ij}v^\alpha_{ij}$.
		This, together with \eqref{distance to counit} and \eqref{Holder}, yields
		\[
		\frac{1}{m}\sum_{i=1}^{m}\lambda_i^{2/n_k}
		=\Vert \hat{u}\Vert_{2/n_k}^{2/n_k}
		\leq\Vert u_{n_k}\Vert_{2}^2
		\leq p_{\alpha_0,k}^2+(1-p_{\alpha_0,k})^2,
		\]
		for all $k$. Since $p_{\alpha_0,k}\to 1$ as $k\to\infty$,  there exists $K>0$ such that for all $k\geq K$, $\frac{1}{2}\leq p_{\alpha_0,k}\leq 1$. Thus for all $k\geq K$, $1-p_{\alpha_0,k}\leq p_{\alpha_0,k}$ and then
		\begin{equation}\label{key inequality in second proof}
		\frac{1}{m}\sum_{i=1}^{m}\lambda_i^{2/n_k}
		\leq p_{\alpha_0,k}^2+(1-p_{\alpha_0,k})^2
		\leq p_{\alpha_0,k}^2+p_{\alpha_0,k}(1-p_{\alpha_0,k})
		=p_{\alpha_0,k}.
		\end{equation}
		Combining this with \eqref{distance to counit}, we have
		\[
		n_k\Vert u_{n_k}-\epsilon_0\Vert
		=2n_k(1-p_{\alpha_0,k})
		\leq 2n_k(1-\frac{1}{m}\sum_{i=1}^{m}\lambda_i^{2/n_k})\leq M<\infty,
		\]
		for all $k\geq K$, where $M$ is a constant independent of $k$. Here we have used the fact that $\lambda_i>0$ for all $i$. From Lemma \ref{bi-invariance} it follows that
		\[
		\sup_{k\geq 1}n_k\Vert \omega_{n_k}-\phi\Vert=\sup_{k\geq 1}n_k\Vert u_{n_k}-\epsilon_0\Vert<\infty.
		\]
		Then $\omega\in\mathcal{P}_\phi(\mathbb{G})$ by Proposition \ref{prop before the final thm}.
	\end{proof}
	
	\begin{rem}
		Both proofs rely on the capture of idempotent state where the infinitely divisible state is ``supported on" and the sequence of roots converging to this idempotent state. After this the first proof aims to show that this sequence of roots can chosen to form a \emph{submonogeneous convolution semigroup} (Proposition \ref{proposition for the first proof} (3)), while the idea of the second proof is derived from a general result Proposition \ref{prop for second proof}, concerning the decay property \eqref{key inequality in second proof} of this sequence of roots. The inequality \eqref{key inequality in second proof} also allows us to simplify the proof of the main theorem in \cite{KRP} for the finite case.
	\end{rem}
	
	\subsection*{Acknowledgment}
	The author would like to thank his advisor Adam Skalski for bringing him the topic, helpful discussions and constant encouragement. A. Skalski also checked an earlier version of this paper carefully and gave many useful comments. This paper could not be finished without his help. The research was partially supported by the NCN (National Centre of Science) grant 2014/14/E/ST1/00525 and the French ``Investissements d'Avenir" program, project ISITE-BFC (contract ANR-15-IDEX-03).


\begin{thebibliography}{DVD11b}
		
		\bibitem[B\"og59]{Boge}
		W. B\"oge.
		\newblock \"Uber die {C}harakterisierung unendlich teilbarer
		{W}ahrscheinlichkeitsverteilungen.
		\newblock {\em J. Reine Angew. Math.}, 201:150--156, 1959.
		
		\bibitem[CV99]{Compactquantumhypergroup}
		Y.~A. Chapovsky and L.~I. Vainerman.
		\newblock Compact quantum hypergroups.
		\newblock {\em J. Operator Theory}, 41(2):261--289, 1999.
		
		\bibitem[DFW17]{preprintofXumin}
		B.~Das, U.~Franz, and X.~Wang.
		\newblock Invariant Markov semigroups on quantum homogeneous spaces.
		\newblock {\em Preprint}, 2017.
		
		\bibitem[DVD11a]{Algebraicquantumhypergroups}
		L.~Delvaux and A.~Van~Daele.
		\newblock Algebraic quantum hypergroups.
		\newblock {\em Adv. Math.}, 226(2):1134--1167, 2011.
		
		\bibitem[DVD11b]{constructionsandexamplesofquantumhypergroups}
		L.~Delvaux and A.~Van~Daele.
		\newblock Algebraic quantum hypergroups {II}. {C}onstructions and examples.
		\newblock {\em Internat. J. Math.}, 22(3):407--434, 2011.
		
		\bibitem[FK17]{Group-likeprojectionsforLCQG}
		R.~{Faal} and P.~{Kasprzak}.
		\newblock {Group-like projections for locally compact quantum groups}.
		\newblock {arXiv:1706.10138}, June 2017.
		
		\bibitem[FS09a]{Newcharacterisationofidempotentstates}
		U. Franz and A. Skalski.
		\newblock A new characterisation of idempotent states on finite and compact
		quantum groups.
		\newblock {\em C. R. Math. Acad. Sci. Paris}, 347(17-18):991--996, 2009.
		
		\bibitem[FS09b]{Idempotentstatesoncqg}
		U. Franz and A. Skalski.
		\newblock On idempotent states on quantum groups.
		\newblock {\em J. Algebra}, 322(5):1774--1802, 2009.
		
		\bibitem[Kal01]{Newexamples}
		A.~A. Kalyuzhnyi.
		\newblock Conditional expectations on compact quantum groups and new examples
		of quantum hypergroups.
		\newblock {\em Methods Funct. Anal. Topology}, 7(4):49--68, 2001.
		
		\bibitem[KI40]{KawadaIto}
		Y. Kawada and K. It\^o.
		\newblock On the probability distribution on a compact group. {I}.
		\newblock {\em Proc. Phys.-Math. Soc. Japan (3)}, 22:977--998, 1940.
		
		\bibitem[KS18]{LatticeofidempotentstatesonLCQG}
		P.~{Kasprzak} and P.~M. {So{\l}tan}.
		\newblock {The Lattice of Idempotent States on a Locally Compact Quantum
			Group}.
		\newblock {arXiv:1802.03953}, February 2018.
		
		\bibitem[LVD08]{grouplikeprojections}
		M.~B. Landstad and A.~Van~Daele.
		\newblock Groups with compact open subgroups and multiplier {H}opf
		{$*$}-algebras.
		\newblock {\em Expo. Math.}, 26(3):197--217, 2008.
		
		\bibitem[MVD98]{CompactquantumgroupsVanDaele}
		A. Maes and A. Van~Daele.
		\newblock Notes on compact quantum groups.
		\newblock {\em Nieuw Arch. Wisk. (4)}, 16(1-2):73--112, 1998.
		
		\bibitem[Pal96]{Pal}
		A. Pal.
		\newblock A counterexample on idempotent states on a compact quantum group.
		\newblock {\em Lett. Math. Phys.}, 37(1):75--77, 1996.
		
		\bibitem[Par70]{KRP}
		K.~R. Parthasarathy.
		\newblock Infinitely divisible representations and positive definite functions
		on a compact group.
		\newblock {\em Comm. Math. Phys.}, 16:148--156, 1970.
		
		\bibitem[Sch72]{poissonlaws}
		L.~Schmetterer.
		\newblock On {P}oisson laws and related questions.
		\newblock pages 169--185, 1972.
		
		\bibitem[VD96]{Discretequantumgroups}
		A.~Van~Daele.
		\newblock Discrete quantum groups.
		\newblock {\em J. Algebra}, 180(2):431--444, 1996.
		
		\bibitem[Wor98]{CompactquantumgroupsWoronowicz}
		S.~L. Woronowicz.
		\newblock Compact quantum groups.
		\newblock In {\em Sym\'etries quantiques ({L}es {H}ouches, 1995)}, pages
		845--884. North-Holland, Amsterdam, 1998.
		
	\end{thebibliography}
\end{document}